\newtheorem{theorem}{Theorem}[section]
\newtheorem{lemma}[theorem]{Lemma}
\newtheorem{proposition}[theorem]{Proposition}
\newtheorem{corollary}[theorem]{Corollary}
\newtheorem{result}{Result}
\theoremstyle{definition}
\theoremstyle{remark}
\newcommand{\R}{\mathbb{R}}
\newcommand{\la}{\langle}
\newcommand{\ra}{\rangle}
\newcommand{\matrixones}{J}
\newcommand{\bone}{\mathbf{1}}
\newcommand{\be}{\mathbf{e}}
\renewcommand{\bf}{\mathbf{f}}
\newcommand{\bg}{\mathbf{g}}
\newcommand{\bv}{\mathbf{v}}
\newcommand{\bx}{\mathbf{x}}
\newcommand{\by}{\mathbf{y}}
\newcommand{\bz}{\mathbf{z}}
\newcommand{\ks}{k^\star}
\newcommand{\zz}{Z^\star}
\newcommand{\ac}{A^\star}
\newcommand{\xc}{X^\star}
\newcommand{\zc}{Z'}
\newcommand{\ccs}{\mathcal{C}^\star} % For true clusters
\newcommand{\lt}{\vartheta}
\newcommand{\V}{\mathcal{V}}
\newcommand{\E}{\mathcal{E}}
\newcommand{\range}{\mathrm{range}}
\newcommand{\lspan}{\mathrm{span}}
\newcommand{\cscc}{c\text{-}{\rm SCC}}
\definecolor{gray}{rgb}{0.8, 0.8, 0.8}
\newcommand{\bal}{\begin{aligned}}
\newcommand{\eal}{\end{aligned}}
\newcommand{\beq}{\begin{equation}}
\newcommand{\eeq}{\end{equation}}
\title{The Lov\'asz Theta Function for Recovering Planted Clique Covers and Graph Colorings} 
\author{
    Jiaxin Hou\textsuperscript{1} \and 
    Yong Sheng Soh\textsuperscript{2} \and 
    Antonios Varvitsiotis \textsuperscript{3}
}
\date{
    \textsuperscript{1}Department of Industrial and Systems Engineering, University of Southern California \\
    \textsuperscript{2} Department of Mathematics, National University of Singapore \\
    \textsuperscript{3}Engineering Systems and Design Pillar, Singapore University of Technology and Design \\ Archimedes/Athena RC, Greece \\ Centre for Quantum Technologies, National University of Singapore \\    \texttt{jiaxinho@usc.edu},   \texttt{matsys@nus.edu.sg}, \texttt{antonios@sutd.edu.sg}
}
\begin{document}

\maketitle

\begin{abstract}
The problems of computing graph colorings and clique covers are central challenges in combinatorial optimization.  Both of these are known to be NP-hard, and thus computationally intractable in the worst-case instance.  A prominent approach for computing approximate solutions to these problems is the celebrated Lov\'asz theta function $\vartheta(G)$, which is specified as the solution of a semidefinite program (SDP), and hence tractable to compute.  In this work, we move beyond the worst-case analysis and set out to understand whether the Lov\'asz theta function recovers clique covers for random instances that have a latent clique cover structure, possibly obscured by noise.  We answer this question in the affirmative and show that for graphs generated from the planted clique model we introduce in this work, the SDP formulation of $\vartheta(G)$ has a unique solution that reveals the underlying clique-cover structure with high-probability.  The main technical step is an intermediate result where we prove a deterministic condition of recovery based on an appropriate notion of sparsity.  
\end{abstract}

\noindent \emph{Keywords}: clique cover, graph coloring,  clustering, community detection, Lov\'asz theta function, beyond worst-case analysis

\section{Introduction}

Graph colorings and clique covers are central problems in combinatorial optimization.  Given an undirected graph $G = (\V,\E)$, the objective of the graph coloring problem is to partition the vertices into independent subsets; that is, all the vertices from the same subset are not adjacent.  The minimum number of  independent sets required is known as the {\em chromatic number} of $G$ and is denoted by $\chi(G)$.  The objective of the clique covering problem is to partition the vertices $\V$ such that each subset is a clique in $G$; that is, all pairs of vertices from the same set are adjacent.  The minimum number of cliques required is known as the {\em clique cover number} of $G$ and is denoted by~$\overline{\chi}(G)$.  

Graph colorings and clique covers are complementary notions -- the complement of an independent set is a clique.  In particular, a partition of $G$ into $k$ independent sets corresponds to a cover with $k$ cliques in the complementary graph $\overline{G}$.  Both clique covers and graph colorings have applications in various fields such as scheduling \cite{marx} and clustering.

The computational tasks of finding clique covers as well as graph colorings are not known to be tractable.  For instance, the problem of deciding whether a graph admits a coloring with $k$ colors is NP-complete; in fact, it is one of Karp's original 21~problems.   However, the notion of NP-hardness is rooted in a worst-case analysis perspective, which may not be representative of the instances that are typically encountered in practice.  As such, it is natural to move beyond the worst-case analysis \cite{rough} and study these problems in a suitably defined average-case instance~\cite{Ban:16}.  This is the viewpoint we adopt in this work.  We focus on the clique cover problem, which by complementation, is equivalent to graph coloring.  {The main question we wish to address is:} {\em Can the clique cover problem be efficiently solved for randomly generated instances where a clique cover structure naturally emerges?}
 
\paragraph{Planted clique cover models.}  The first step is to provide a suitable definition of a randomly generated instance.  In this paper, we generate {\em planted clique cover} instances specified by a set of vertices $\V$, a corresponding partition of the vertices $\{\ccs_l\}_{l=1}^{\ks} \subset \V$, and a parameter $p \in [0,1]$.  We generate a graph from the planted clique model by creating cliques for every subset $\ccs_l$, $1 \leq l \leq \ks$, and by subsequently introducing edges between vertices $i,j$ belonging to different cliques, with probability $p$ independently across all edges; see Figure~\ref{cluster} for an example.   

\begin{figure}[h]\label{cluster}
	\centering
     \includegraphics[width=0.3\linewidth]{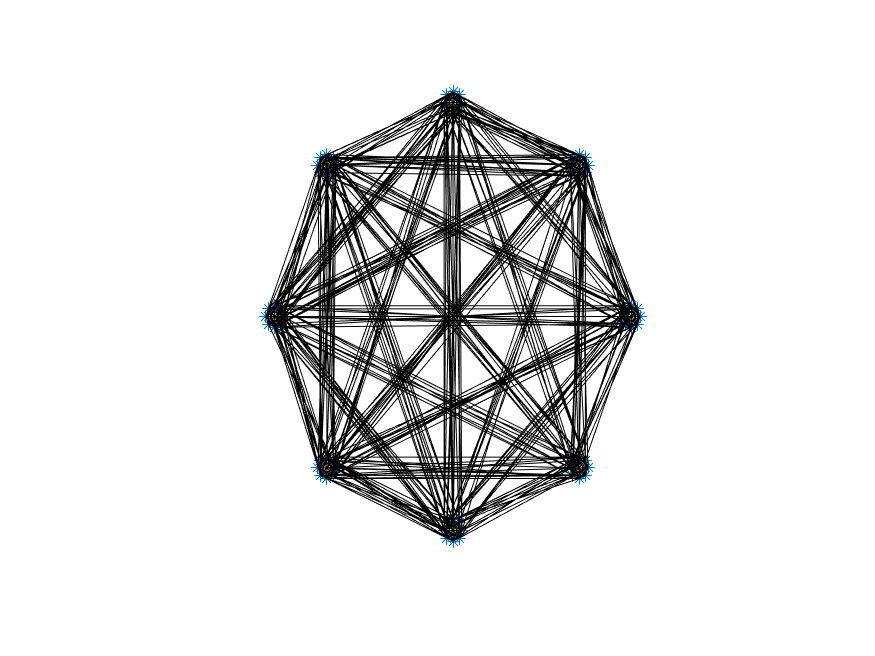}
     \includegraphics[width=0.3\linewidth]{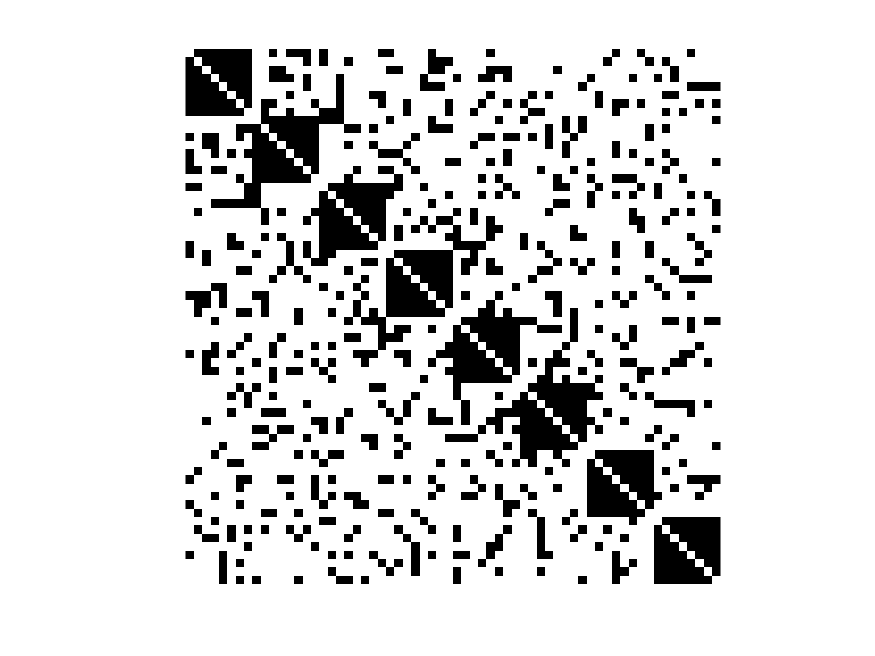}
     \caption{A planted clique instance with $8$ cliques of size $8$ each, and with parameter $p=0.2$.}
\end{figure}

\paragraph{Lov\'asz theta function.}  Graphs generated from the planted clique cover model have a natural clique cover structure obscured by noise.  Our goal is to understand whether  this latent structure can be revealed using a widely studied graph theoretic quantity.  More specifically, the Lov\'asz theta function of an undirected graph $G=(\V,\E)$ is given by
\beq\tag{P} \label{eq:lovasz_lambdamax}
\lt(G) ~~ := ~~ \min_{t,A} \Big\{t : tI+A-\matrixones\succeq 0, \ A_{i,i}=0, \ A_{i,j}=0 \text{ for all } (i,j)\not \in \E \Big\},
\eeq
where $\matrixones$ is the matrix of all ones and $I$ is the identity matrix of size $|\V|$.  The Lov\'asz theta function can be viewed as a convex relaxation of the integer programming formulation of the clique cover number.  We point out that the specific formulation \eqref{eq:lovasz_lambdamax} is simply one among a number of equivalent formulations, and we refer the interested reader to \cite{knuth} for a list of alternative definitions.  
 
An important property of $\vartheta(G)$ -- often referred to as the ``sandwich theorem'' -- is that it is a lower bound to the clique cover number and an upper bound to the stability~{number \cite{theta}}
\beq \label{sand}
\alpha(G) = \omega(\overline{G}) \leq \lt (G) \leq \overline{\chi}(G).
\eeq
Here, $\alpha(G)$ denotes the size of the largest subset of vertices of $G$ that are not adjacent, which, by taking complements, is equal to the size of the largest clique in $\overline{G}$, denoted by $\omega(\overline{G})$.  The Lov\'asz theta function is the solution of a semidefinite program (SDP), and hence computable in polynomial time \cite{NesNem:94,Ren:01} -- this stands in sharp contrast with the stability number and the chromatic number of a graph, both of which are NP-hard to compute.  In particular, the sandwich theorem has important consequences for {\em perfect} graphs -- these are graphs for which, in every induced subgraph of $G$, the chromatic number equals the size of the maximum clique.  For such graphs, one has $\omega(G) = \lt(\overline{G}) = \chi(G)$, and thus the clique number and the chromatic number of any perfect graph can be computed efficiently in polynomial time.  One can also obtain the minimal clique covers and colorings for such graphs tractably -- see, e.g., \cite[Section 6.3.2]{monique}.  More broadly, the Lov\'asz theta function has important connections to a number of combinatorial and information theoretic properties of $G$, and one of the aims our work is to draw attention to the fact that the Lov\'asz theta function is also a powerful convex relaxation for recovering clique covers.

\paragraph{Related work.}  Our work is closely related to the literature on clustering and community detection problems.  Concretely, consider a network where the presence of an edge (or possibly the size of its edge weight) models the strength of association between two nodes.  The graph clustering task concerns grouping these nodes into subsets -- often called {\em communities} -- such that the strength of association between pairs of nodes from the same subset is significantly stronger that the strength of association between pairs belonging to different subsets.  The community detection task finds application in various fields such as sociology, physics, epidemiology, and more \cite{For:10,New:06}. The term {\em community detection} is perhaps often interchangeably used with {\em graph clustering}, and in what follows we refer to this body of work by community detection.

The {\em Stochastic Block Model (SBM)} is a generative model for creating networks with pre-defined community structures studied within the community detection literature. In the two-community SBM, one generates associations between nodes from the same community with probability $q$, and associations between nodes from different communities with  probability $p<q$.  The SBM provides us with a concrete approach to understanding the performance of community detection algorithms on random instances, as opposed to focusing on worst-case instances, which can be overly conservative.  The planted clique cover model we consider in this work is a special instance of the SBM with $q=1$.

There is a substantial body of prior work on the recovery of communities under the SBM; e.g. see the survey \cite{survey} and references therein.  While there are many different families of techniques to recover these communities, the approaches most relevant to our work are those based on SDP relaxations.  As we discuss below, these techniques can also be adapted to find clique covers, but they often operate under more stringent assumptions about the data than is required in this work.

First, the seminal work by Abbe, Bandeira, and Hall \cite{ABH:16} study a SDP relaxation of a quadratic integer program that expresses the maximum likelihood estimator.  In particular, the authors focus on understanding fundamental thresholds under which recovery is possible.  Concretely, supposing that there are two communities of equal size $n$, and one sets the parameters $q = \alpha (\log n/n)$ and $p = \beta (\log n/n)$, the authors in \cite{ABH:16} study the range of values of $\alpha$ and $\beta$ for which recovery is possible or impossible.  Our SDP formulation differs from the one in \cite{ABH:16} in that ours is applicable beyond the two-clique settings they focus on.

Subsequent work by Amini and Levina \cite{amini} as well as work by Chen and Xu \cite{CX} study SDP relaxations that apply to settings with more than two clusters.  These relaxations are derived from exact  formulations  that use  binary vectors to encode membership within a cluster.  We point out that in comparison to our work, these relaxations assume additional knowledge of the data we do not.  For instance, the formulation (see SDP1 and SDP2) and the subsequent analysis (see e.g. Theorems 4.1 and 4.2) in \cite{amini} assumes knowledge of the number of clusters $k$, as well as that each cluster has the {\em same} number of elements $n$.  We point out that there is also a different SDP formulation by Oymak and Hassibi \cite{OH:11} that does not make assumptions about the cluster size; however our experiments show that  the quality of the recovery appears substantially weaker than the Lov\'{a}sz theta function.  We discuss the latter point in more detail in Section \ref{subsec:related work}.

Finally, we discuss related work by Ames and Vavasis \cite{AV:14}, who explored an SDP relaxation for the \(k\)-disjoint-clique problem aimed at finding \(k\) disjoint cliques that cover the maximum number of nodes. Our work differs  from theirs for two primary reasons. First, the cliques identified in their approach are not required to cover all nodes; they may only cover a subset, and thus they are not feasible for the clique cover problems. Second, the formulation in \cite{AV:14} requires specifying the number of cliques \(k\), unlike the Lovász theta number which does not need on such assumptions. We provide a numerical comparison between these methodologies in Section \ref{subsec:related work}.

\paragraph{Recovery of planted clique covers via $\vartheta(G)$.} The goal of this work is to show that the Lov\'asz theta recovers the planted clique cover model with high probability.  What does such a claim entail?  First, note that the latter inequality in \eqref{sand} -- namely $ \vartheta (G) \leq \overline{\chi}(G)$ -- relies on the fact that any clique cover corresponds to a feasible solution of the Lov\'asz theta formulation \eqref{eq:lovasz_lambdamax}.  Concretely, given a graph $G=(\V,\E)$, let $\mathcal{C}=\{\mathcal{C}_l\}_{l=1}^{k}$ be any clique cover.  Consider the following matrix
\begin{equation} \label{eq:feasiblesolution}
X = kI + \Big(\sum_{l=1}^kk\bone_{\mathcal{C}_l}\bone_{\mathcal{C}_l}^T-kI \Big) - \matrixones = {1\over k}\sum_{l=1}^{k} (k\bone_{\mathcal{C}_l}-\be) (k\bone_{\mathcal{C}_l}-\be)^T.
\end{equation}
Here, we use $\bone_{\mathcal{C}} \in \mathbb{R}^{|V|}$ to denote the binary indicator variable whose $i$-th coordinate is one precisely when $i\in \mathcal{C}$.  It is evident that $X \succeq 0$, and hence $A=\sum_{l=1}^kk\bone_{\mathcal{C}_l}\bone_{\mathcal{C}_l}^T-kI$ is a feasible solution to \eqref{eq:lovasz_lambdamax} with objective value $t=k$.  In what follows, we define $\ac$ and $\xc$ with respect to the planted clique covering $\{\ccs_l\}_{l=1}^{\ks}$
\begin{equation} \label{eq:feasiblesolution}
\ac := \sum_{l=1}^{\ks} \ks \bone_{\ccs_l}\bone_{\ccs_l}^T-\ks I \quad \text{ and } \quad  \xc  := \ks I + \ac - \matrixones = {1\over \ks}\sum_{l=1}^{\ks} (\ks\bone_{\ccs_l}-\be) (\ks\bone_{\ccs_l}-\be)^T.
\end{equation}
When we say that we wish to show that the Lov\'asz theta recovers the planted clique, we specifically mean to prove that the pair $(\ac,\ks)$ is the {\em unique} optimal solution to \eqref{eq:lovasz_lambdamax} for graphs generated according to the above planted clique cover model, with high probability.     

Our first main result is as follows.

\begin{result} \label{thm:result1}
Let $G$ be a random planted clique cover instance  defined on the cliques $\{ \ccs_l \}_{l=1}^{\ks}$, and where we introduce edges between cliques with probability $p < c:= \min \big\{ \frac{1}{4} (\frac{\min_l 1/|\ccs_l|}{\sum_l 1/|\ccs_l|})^2,{ 1\over 100 }\big\}$.  Then $(\ac,\ks)$ is the unique optimal solution to  the Lov\'asz theta formulation \eqref{eq:lovasz_lambdamax} with probability greater~than 
$$1 - \sum_{i=1}^{\ks} {|\ccs_i| \sum_{j \neq i}\exp(-2|\ccs_j|(c-p)^2}).
$$
\end{result}
The simplest type of graphs in the context of the clique cover problem  are disjoint unions of cliques.  An important intermediate step of our proof is to show that the Lov\'asz theta function succeeds at recovering clique covers for graphs that resemble disjoint union of cliques.  More concretely, we say that a graph $G$ with clique cover $\{ \ccs_l \}_{l=1}^{\ks}$ satisfies the $c$-sparse clique cover (c-SCC) property if, for every vertex $v$ and every clique $\ccs_l$ that does not contain $v$, one has
\begin{equation} \label{eq:neighborcondition}\tag{$c$-SCC}
|e(v, \ccs_l)| \leq c | \ccs_l|. %\tag{E1}
\end{equation}
Here, given disjoint subsets of vertices $\mathcal{S}$ and $\mathcal{T}$, we let $e(\mathcal{S},\mathcal{T}) \subset \E$ denote the edges in $\E$ with one vertex in $\mathcal{S}$ and another vertex in $\mathcal{T}$.  Note that if the graph $G$ satisfies the $\cscc$ property for some $c<1$, then every vertex $v \in \V$ cannot form a clique with any other clique $\ccs_l$ for which it does not belong to.

\begin{result} \label{thm:result2}
Suppose $G$ is a graph that satisfies the $c$-SCC property for some 
$c < \min \{ \frac{1}{4} (\frac{\min_l 1/|\ccs_l|}{\sum_l 1/|\ccs_l|})^2,\frac{1}{100} \}$.  Then $\xc$ is the unique optimal solution to the Lov\'asz theta formulation \eqref{eq:lovasz_lambdamax}.
\end{result}

If the cliques $\{ \ccs_l \}_{l=1}^{\ks}$ are of equal size, then the above condition requires $c \lesssim (1/\ks)^2$.  An important observation from Result \ref{thm:result2} is that it reveals the types of graphs for which the Lov\'asz theta function is most effective at uncovering the underlying clique cover -- these are graphs for which the $c$-neighborly parameter is small.  The parameter $p$ in our model is related to the neighborly parameter $c$ in that small choices of $p$ will generate graphs with small neighborly parameter.  In our numerical experiments in Section \ref{sec:numerical}, we see that the parameter $p$ (and hence, by extension $c$) also reveal something about the difficulty of computing the clique covering number of a particular graph -- the ``hard'' instances (these are defined by those requiring a large number of simplex solves using a MILP solver) correspond to graphs generated using intermediate values of $p$ (say $p \in [ 0.6, 0.7]$).

\subsection{Proof Outline} \label{sec:outline}

In this section, we give an outline of the proof of our main results.  The key part of our argument is to produce a suitable dual optimal solution that certifies the optimality as well as the uniqueness of $(\ac,\ks)$ in \eqref{eq:lovasz_lambdamax}.  First, as a reminder, the dual program to \eqref{eq:lovasz_lambdamax} is the following semidefinite~program
\beq\label{eq:dual}\tag{D}
\max \Big\{ \, \langle \matrixones , Z \rangle \,:\, \mathrm{tr}(Z) = 1, Z_{i,j} = 0 \text{ for all } {(i,j) \in \E}, \ Z \succeq 0  \Big\}.
\eeq

A basic consequence of weak duality is that, in order to show $(\ac,\ks)$ is {\em an} optimal solution to~\eqref{eq:lovasz_lambdamax} (note that this is equivalent to $\vartheta(G) = \ks$), it suffices to produce a dual feasible solution that attains the same objective; i.e., one would need to exhibit $\hat{Z}$ that is a feasible solution to \eqref{eq:dual} and satisfies $\langle \matrixones, \hat{Z} \rangle = \ks$. To show that $(\ac,\ks)$ is the {\em unique} optimal solution to \eqref{eq:lovasz_lambdamax},  it is necessary to appeal to a strict complementarity-type of result for SDPs; see, e.g., \cite{overton,LV}.  In what follows, we specialize a set of conditions from \cite{LV} that guarantee unique solutions in SDPs to the Lov\'asz theta function.

\begin{theorem}\label{thm:constraintqualification}
Let $(t,A)$ and ${Z}$ be a pair of strict complementary primal and dual optimal solutions to \eqref{eq:lovasz_lambdamax} and \eqref{eq:dual} respectively, i.e., they satisfy:
$$\la X , Z \ra = 0 \quad \text{ and } \quad \mathrm{rank}(X) + \mathrm{rank}(Z) = |\V|, \qquad \text{ where } \qquad X = tI + A - \matrixones. $$  
Then $A$ is the unique primal optimal solution to \eqref{eq:lovasz_lambdamax} if and only if $A$ is an extreme point of the feasible region of \eqref{eq:lovasz_lambdamax}.
\end{theorem}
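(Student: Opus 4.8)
The statement is essentially a transcription of a known strict-complementarity characterization of uniqueness in SDP (as in \cite{LV}) specialized to the Lovász theta pair \eqref{eq:lovasz_lambdamax}–\eqref{eq:dual}; so the proof is a matter of unwinding definitions and invoking the generic result correctly. The plan is to work with the primal feasible region in the variable $A$ (with $t$ determined, at optimality, by $t = \vartheta(G)$) and its image under the affine map $A \mapsto X = tI + A - J$ into the PSD cone. Throughout, fix the optimal value $t = \vartheta(G)$; the ``$A$ is optimal'' discussion then takes place inside the affine slice $\{(t,A) : t = \vartheta(G)\}$, which is legitimate because $t$ is the objective and any optimal solution attains this value.

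\textbf{Step 1: the easy direction.} First I would prove that uniqueness implies extremality, by contrapositive. Suppose $A$ is optimal but not an extreme point of the feasible region $\mathcal{F} := \{A : \vartheta(G) I + A - J \succeq 0,\ A_{ii}=0,\ A_{ij}=0 \text{ for } (i,j)\notin\mathcal{E}\}$. Then $A = \tfrac12(A_1 + A_2)$ with $A_1 \neq A_2$ both in $\mathcal{F}$. Each $A_r$ together with $t = \vartheta(G)$ is feasible for \eqref{eq:lovasz_lambdamax} with objective $\vartheta(G)$, hence optimal, contradicting uniqueness. (This direction uses nothing about $Z$ or strict complementarity.)

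\textbf{Step 2: extremality implies uniqueness.} This is the substantive direction and where the strict complementarity hypothesis is used. Let $A'$ be any optimal primal solution, with $X' = \vartheta(G) I + A' - J \succeq 0$. Complementary slackness with the dual optimal $Z$ gives $\langle X', Z\rangle = 0$, so $\range(X') \subseteq \ker(Z)$; combined with $\langle X, Z\rangle = 0$ we also have $\range(X) \subseteq \ker(Z)$, and the rank condition $\mathrm{rank}(X) + \mathrm{rank}(Z) = |\mathcal{V}|$ forces $\range(X) = \ker(Z)$. Hence $\range(X') \subseteq \range(X)$. Now consider $A_\lambda := A + \lambda (A' - A)$ for $\lambda$ in a small interval around $[0,1]$; the linear constraints ($A_\lambda$ supported off $\mathcal{E}$, zero diagonal) are preserved for all $\lambda \in \mathbb{R}$, and I claim $X_\lambda := \vartheta(G) I + A_\lambda - J = X + \lambda(X' - X) \succeq 0$ for $\lambda$ slightly negative as well as slightly larger than $1$. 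The point is that $X' - X$ maps into $\range(X)$ (since both $\range(X'), \range(X) $ lie in $\range(X)$), so the perturbation stays within the face of the PSD cone determined by $\range(X)$; on that face $X$ is relatively interior (it has full rank $\mathrm{rank}(X)$ there), so $X + \lambda(X'-X)$ remains PSD for all $\lambda$ in some open interval containing $[0,1]$. Therefore $A$ lies in the relative interior of a segment of feasible (hence optimal) points $\{A_\lambda\}$; extremality of $A$ then forces $A' = A$.

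\textbf{Main obstacle and loose ends.} The one place requiring care — and the step I expect to be the crux — is the face argument in Step 2: namely that $X' - X \in \lspan\{ YY^T : \range(Y) \subseteq \range(X)\}$ and that $X$ is in the relative interior of the corresponding PSD face, so that a two-sided (in $\lambda$) perturbation keeps $X_\lambda \succeq 0$. This is exactly the structural fact that makes strict complementarity do its job, and I would either cite the relevant lemma from \cite{LV} verbatim or include the short linear-algebra argument: write $X = U D U^T$ with $U$ an orthonormal basis of $\range(X)$ and $D \succ 0$; since $\range(X') \subseteq \range(X)$, write $X' = U M U^T$ with $M \succeq 0$; then $X_\lambda = U(D + \lambda(M - D))U^T$, which is PSD iff $D + \lambda(M-D) \succeq 0$, true on an open $\lambda$-interval containing $[0,1]$ since $D \succ 0$. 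The remaining bookkeeping — that the affine constraints on $A_\lambda$ hold for all real $\lambda$ because they are linear and homogeneous in the perturbation $A' - A$, and that the objective is constant $\vartheta(G)$ along the segment — is routine. I would also remark that existence of a strict complementary pair is not claimed here (it must be verified separately for the planted model, which is done via the explicit dual construction in the proof outline); Theorem~\ref{thm:constraintqualification} is conditional on being handed such a pair.
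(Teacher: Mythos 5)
Your proof is correct, but it takes a genuinely different route from the paper: the paper gives no proof of this theorem at all, presenting it as a specialization of the general SDP uniqueness conditions of \cite{LV} (see also \cite{overton}), whereas you supply a self-contained facial-geometry argument. What your approach buys is transparency: the easy direction (unique $\Rightarrow$ extreme) is the standard midpoint argument, and the substantive direction isolates exactly where strict complementarity enters — zero duality gap for the given pair forces $\langle X',Z\rangle=0$ for any other optimal $X'$, hence $\range(X')\subseteq\ker(Z)$, and the rank condition upgrades $\range(X)\subseteq\ker(Z)$ to $\range(X)=\ker(Z)$, so every optimal $X'$ lives on the face of the PSD cone determined by $\range(X)$, on which $X$ is relatively interior; a two-sided perturbation along $A'-A$ then contradicts extremality unless $A'=A$. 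Your handling of the variable $t$ (fixing $t=\vartheta(G)$ and noting that extremality in the slice and in the full $(t,A)$-region coincide at optimality because the $t$'s in any convex representation must average to the minimum) is also sound, and is worth keeping explicit since the paper certifies extremality of the pair $(\ac,\ks)$ via Theorem \ref{thm:extremal_rg} in the $(t,A)$-parametrization. One small inaccuracy to fix: with $X=UDU^T$, $D\succ 0$, and $X'=UMU^T$, $M\succeq 0$, the matrix $(1-\lambda)D+\lambda M$ need not stay PSD on an open interval containing all of $[0,1]$ (it fails for $\lambda>1$ whenever $M$ is singular in a direction where $D$ is not); what is true, and all you need, is positive semidefiniteness on an interval $(-\epsilon,1]$, since only a two-sided neighborhood of $\lambda=0$ is required to place $A$ in the relative interior of a nondegenerate feasible segment. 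With that sentence corrected, the argument is complete.
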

In view of Theorem \ref{thm:constraintqualification}, to show that $(\ac,\ks)$ is the unique optimal solution of \eqref{eq:lovasz_lambdamax}, it suffices to $(i)$ show that $(\ac,\ks)$ is an extreme point of the feasible region of the feasible region of \eqref{eq:lovasz_lambdamax}, and $(ii) $ produce a dual optimal $\hat{Z} \in \mathbb{R}^{|V| \times |V|}$ that satisfies strict complementarity.

The answer to the first task has simple answer: in Section \ref{sec:extremality}, we show that $(\ac,\ks)$ is an extreme point of the feasible region of \eqref{eq:lovasz_lambdamax} whenever the graph $G$ satisfies the $\cscc$ condition for some $c < 1$.  Our proof relies on results that characterize extreme points of spectrahedra, and the specific version we use is found in Corollary 3 of \cite{RG:95}:
\begin{theorem}\label{thm:extremal_rg}
Let $\mathcal{S}$ be a spectrahedron specified in linear matrix inequality (LMI) form
\begin{equation} \label{eq:LMI}
\mathcal{S}=\Big \{ (x_1, \ldots, x_n)^T \in \R^n : Q_0 +\sum_{i=1}^n x_i Q_i \succeq 0 \Big \},
\end{equation} 
where $Q_0,Q_1,\ldots, Q_n \in \mathbb{R}^{m \times m}$ are symmetric matrices.  Let $\by = (y_1,\ldots, y_n)^T \in \mathcal{S}$, and $U$ be an $m\times k$ matrix whose columns form a basis of the kernel of the matrix $Q_0 +\sum_i y_i Q_i$. Then, $\by$ is an extreme point of $\mathcal{S}$ if and only if the vectors ${\rm vec}(Q_1U), \ldots, {\rm vec}(Q_nU)$ are linearly independent. 
\end{theorem}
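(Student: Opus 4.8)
The plan is to characterize the extreme points of $\mathcal{S}$ through a local analysis of the feasible directions at a point, and then to translate the resulting linear-algebraic condition into the stated independence condition. Throughout, write $M(x) := Q_0 + \sum_{i=1}^n x_i Q_i$, so that $\mathcal{S} = \{x \in \R^n : M(x) \succeq 0\}$, and fix $\by \in \mathcal{S}$. Since $\mathcal{S}$ is convex, $\by$ fails to be an extreme point if and only if there is a nonzero $\mathbf{d} \in \R^n$ with $\by \pm \mathbf{d} \in \mathcal{S}$; writing $D := \sum_{i=1}^n d_i Q_i$, by scaling this is equivalent to the existence of a nonzero $\mathbf{d}$ such that $M(\by) \pm \epsilon D \succeq 0$ for all sufficiently small $\epsilon > 0$ --- indeed, $\{\epsilon \in \R : M(\by) + \epsilon D \succeq 0 \text{ and } M(\by) - \epsilon D \succeq 0\}$ is an interval symmetric about $0$, so it contains a nonzero point exactly when it contains a neighborhood of $0$. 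Everything thus reduces to understanding, for fixed symmetric $D$, when $M(\by) \pm \epsilon D \succeq 0$ holds for all small $\epsilon > 0$.

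The key step is the following lemma: $M(\by) \pm \epsilon D \succeq 0$ for all sufficiently small $\epsilon > 0$ if and only if $DU = 0$. For the forward direction, pick such an $\epsilon$ and any $\bu \in \ker M(\by) = \range U$; then $\bu^T (M(\by) \pm \epsilon D)\bu = \pm\,\epsilon\,\bu^T D \bu \geq 0$ forces $\bu^T D \bu = 0$, and since $M(\by) + \epsilon D \succeq 0$ with $\bu^T(M(\by)+\epsilon D)\bu = 0$, positive semidefiniteness yields $(M(\by)+\epsilon D)\bu = 0$, hence $D\bu = -\epsilon^{-1} M(\by)\bu = 0$; as $\bu$ ranges over a spanning set of $\ker M(\by)$, this gives $DU = 0$. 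For the converse, $DU = 0$ gives $\ker M(\by) \subseteq \ker D$, and since $D$ is symmetric, $\range D = (\ker D)^\perp \subseteq (\ker M(\by))^\perp = \range M(\by)$; choosing an orthonormal basis adapted to $\R^m = \ker M(\by) \oplus \range M(\by)$, both $M(\by)$ and $D$ become block-diagonal with vanishing block on $\ker M(\by)$, say $M(\by) = \diag(0,B)$ with $B \succ 0$ (as $M(\by)\succeq 0$) and $D = \diag(0,D')$, whence $M(\by) \pm \epsilon D = \diag(0, B \pm \epsilon D') \succeq 0$ once $\epsilon$ is small enough that $B \pm \epsilon D' \succ 0$.

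To conclude, note that $DU = 0$ is precisely $\sum_{i=1}^n d_i Q_i U = 0$, equivalently $\sum_{i=1}^n d_i\, {\rm vec}(Q_i U) = 0$. Combining with the reduction above, $\by$ fails to be an extreme point of $\mathcal{S}$ if and only if there is a nonzero $\mathbf{d} \in \R^n$ with $\sum_{i=1}^n d_i\, {\rm vec}(Q_i U) = 0$, that is, if and only if ${\rm vec}(Q_1 U), \ldots, {\rm vec}(Q_n U)$ are linearly dependent; taking the contrapositive gives the claimed equivalence.

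I expect the main obstacle to lie in the lemma, and within it in two points: first, the passage from $\bu^T D \bu = 0$ to $D\bu = 0$, which is not a statement about $D$ in isolation but genuinely uses that $M(\by) + \epsilon D$ is positive semidefinite; and second, in the converse, the claim that an admissible two-sided perturbation $D$ cannot couple $\ker M(\by)$ to its orthogonal complement, i.e. $\range D \subseteq \range M(\by)$, which is what legitimizes the block-diagonal reduction. The only other place needing care is the reduction in the first paragraph --- that testing feasibility for arbitrarily small $\epsilon > 0$ is equivalent to testing it for one fixed nonzero perturbation --- which follows from the interval structure of the set of admissible step sizes.
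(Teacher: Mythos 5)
Your proof is correct and complete: the reduction of extremality to the existence of a nonzero two-sided perturbation direction, the lemma that $M(\by)\pm\epsilon D\succeq 0$ for small $\epsilon>0$ exactly when $D$ annihilates $\ker M(\by)$ (with the PSD fact $\bu^T A\bu=0\Rightarrow A\bu=0$ used correctly in one direction and the block-diagonal reduction justified by symmetry in the other), and the translation of $DU=0$ into linear dependence of ${\rm vec}(Q_1U),\ldots,{\rm vec}(Q_nU)$ all hold, including the degenerate cases where $\sum_i d_iQ_i=0$ or $\ker M(\by)=\{0\}$. Note that the paper itself does not prove this statement; it imports it as Corollary 3 of Ramana and Goldman \cite{RG:95}, and your argument is essentially the standard one underlying that result, so you have in effect supplied the omitted proof rather than diverged from the paper's route.
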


The answer to the second task is considerably more involved.  As a reminder, a matrix $Z \in~\mathbb{R}^{|V| \times |V|}$ is optimal  for \eqref{eq:dual}  if it satisfies:
\begin{align} 
& Z \succeq 0 \label{cond1}, \\    
& Z_{i,j} = 0 \text{ for all } (i,j) \in \E \label{eq:supportcondition}, \\
& \langle Z, \xc \rangle = 0 \label{eq:complementaryslackness}, \\ 
& \mathrm{tr}(Z) = 1. \label{eq:traceone}
\end{align}
We call a matrix $Z$ satisfying these conditions a {\em dual certificate}.  Furthermore, we say that a dual certificate  $Z$ and $\xc$ satisfy  {\em strict complementarity}~if:%the following also holds:
\begin{equation}
{\rm rank}(Z)=|\V| - {\rm rank}(\xc) \label{cond3}.
\end{equation}

Note that we can omit condition \eqref{eq:traceone} as we can always scale a matrix that satisfies conditions \eqref{cond1}, \eqref{eq:supportcondition},  \eqref{eq:complementaryslackness} and  \eqref{cond3} to make it trace-one.  The proof our two main results   is given in three steps.

\paragraph{Step 1: Exact recovery in the case of disjoint cliques.}  In the first step, we consider the simplest setting where $G$ is the disjoint union of cliques $\{\ccs_l\}_{l=1}^{\ks}$.  With a bit of guesswork, we construct the following matrix $\zz$ defined as follows:
\begin{equation}\label{zmatrix}
\zz := \left(\begin{array}{ccc}
\frac{I}{|\ccs_1|} & \frac{1}{|\ccs_1||\ccs_2|} \matrixones & \ldots \\
\frac{1}{|\ccs_1||\ccs_2|} \matrixones & \frac{I}{|\ccs_2|} & \ldots \\
\vdots & \vdots & \ddots
 \end{array}\right).
\end{equation}
Here, the $(i,j)$-th block has dimension $|\ccs_i| \times |\ccs_j|$.

It is relatively straightforward to verify that the conditions \eqref{eq:supportcondition} and \eqref{eq:complementaryslackness} are satisfied.  In  Proposition \ref{thm:Unequalsizecanonicalmatrix_spectrum} we characterize the spectrum of the matrix $\zz$, and in so doing, show that $\zz$ satisfies the conditions \eqref{cond1} and \eqref{cond3}.  We explain these steps in Sections \ref{sec:extremality} and \ref{sec:disjointunion}.

\paragraph{Step 2: Exact recovery  for graphs with the $\cscc$ property.}  The matrix $\zz$ that served as our dual certificate for graphs formed by disjoint cliques does not work in the more general setting as it is non-zero on every edge between distinct cliques; i.e., it violates \eqref{eq:supportcondition}.  The key idea for constructing a suitable dual certificate in this case begins by recognizing that we can satisfy conditions \eqref{eq:supportcondition} and \eqref{eq:complementaryslackness} by specifying suitable subspaces.  Concretely, we define
$$
\begin{aligned}
\mathcal{K} & := \{ Z : Z_{i,j} = 0 \text{ for all } (i,j) \in \E \}, \quad \text{ and } \\
\mathcal{L} & := \{ Z : Z(\ks\bone_{\ccs_l} -\be)=0 \text{ for all } l \in [\ks] \}.
\end{aligned}
$$
At a first glance, it is not clear how the subspace $\mathcal{L}$ relates to condition \eqref{eq:complementaryslackness}.  In fact, $\mathcal{L}$ actually specifies a complementary slackness type of condition $\xc Z = 0$.  Later, in Proposition \ref{csexpnaded}, we show that the condition specified by $\mathcal{L}$ coincides with the condition $\langle Z,\xc \rangle = 0$ whenever $Z$ is PSD.  In view of the new subspaces we define, we consider a dual certificate $\zc$ obtained by projecting the matrix $\zz$ with respect to the Frobenius norm onto the intersection of $\mathcal{K}$ and $\mathcal{L}$:
$$
\zc ~~:=~~ \underset{X}{\arg \min} ~\| X - \zz \|_F^2 \quad \mathrm{s.t.} \quad X \in \mathcal{K}\cap \mathcal{L}.
$$
The remainder of proof is a matrix perturbation argument in which we show that $\|\zc - \zz \|$ is quite small, which allows us to conclude the remaining conditions \eqref{cond1} and \eqref{cond3}.
We now provide some geometric intuition why  the  projection onto the intersection $\mathcal{K}\cap \mathcal{L}$ succeeds.  

The  graphs considered in this work  are either disjoint union of cliques or have the $\cscc$ property.  In both cases, we have that $\mathcal{K}$ is a subspace of  
$$\mathcal{M} := \{ X \in \mathbb{R}^{|\V| \times |\V|}: \ X=X^T, (\ccs_l,\ccs_l) \text{-th block is diagonal for all } 1 \leq l \leq k \},
$$
which we treat as the ambient space, rather than the space of all symmetric matrices.  We set
$$\tilde{\mathcal{K}} := \mathcal{K} \cap \mathcal{M}  \ \text{ and } \  \tilde{\mathcal{L}}:= \mathcal{L} \cap \mathcal{M}.$$
Then $\zc$ can be equivalently defined as the projection of $\zz$ onto the intersection of $\tilde{\mathcal{K}} \cap \tilde{\mathcal{L}}$.
\begin{equation} \label{eq:euclideanprojection}
\zc ~~:=~~ \underset{X}{\arg \min} ~\| X - \zz \|_F^2 \quad \mathrm{s.t.} \quad X \in \tilde{\mathcal{K}} \cap \tilde{\mathcal{L}}.
\end{equation}
In Section \ref{sec:proof-deterministicrecovery} we give the main technical result of this work where we show that the subspaces $ \tilde{\mathcal{K}}^\perp$ and $ \tilde{\mathcal{L}}^\perp$ are ``almost orthogonal''.  The proof of this step heavily requires that the ambient space be specified as $\mathcal{M}$.  Subsequently, using consequences from first order optimality, we show that near orthogonality implies exact recovery.

\paragraph{Step 3: Recovery for planted clique cover instances.}  Our third  step shows that planted clique instances are $\cscc$, with high probability.  The proof is a direction application of Hoeffding's inequality combined with a union bound, and is provided in Section \ref{sec:randomgraphs}.

\section{Extremality of $(\ac,\ks)$} \label{sec:extremality}

The goal of this section is to provide a simple sufficient condition that guarantees when $(\ac,\ks)$ is an extreme point of the feasible region of \eqref{eq:lovasz_lambdamax}.  As we noted in Section \ref{sec:outline}, we rely  on a  general result that describes extreme points of spectrahedra specified by a linear matrix inequality (LMI), stated in the form of Theorem \ref{thm:extremal_rg}.  We begin by expressing the feasible region of \eqref{eq:lovasz_lambdamax} as the following LMI
$$
\Big \{  (t, a_{i,j})_{(i,j) \in \E} \in \mathbb{R}^{|\E|+1} : - \matrixones + t I + \sum_{(i,j) \in \E} a_{i,j} E_{i,j} \succeq 0 \Big \}.
$$
Here we use the notation $E_{i,j}={1\over 2}(\be_i \be_j^T+\be_j \be_i^T)$.  This suggests we should take the matrices $\{I\} \cup \{ E_{i,j} : (i,j) \in \E \}$ to be $\{Q_i\}_{i=1}^{n}$, and to identify the matrix $\xc$ with $Q_0 +\sum_i x_i Q_i$ in Theorem \ref{thm:extremal_rg}.  Our next task is to characterize the kernel of~$\xc$.

\subsection{Computing the kernel of $\xc$}

\begin{proposition}\label{thm:jspace} Setting 
$$
\mathcal{J}:=\big \{ \bx \in \R^{|\V|} : \la \bx, \bone_{\ccs_1} \ra = \ldots = \la \bx,  \bone_{\ccs_{\ks}}\ra \big\},
$$
 we have that
$$\mathcal{J} = \lspan\{\bone_{\ccs_1} - \bone_{\ccs_l}:  2\le l\le \ks \}^\perp = \lspan\{ \ks \bone_{\ccs_l} -\be: l\in [\ks]\}^\perp.$$
\end{proposition}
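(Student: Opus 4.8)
The goal is to establish the two equalities in Proposition~\ref{thm:jspace}. Since both claimed identities assert that $\mathcal{J}$ is the orthogonal complement of a span, the natural route is to verify, for each spanning set, that (i)~$\mathcal{J}$ is contained in the orthogonal complement, and (ii)~the two spans coincide (so that the two complements coincide), and then pin down dimensions to get equality rather than mere containment.

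First I would record the trivial observation that $\bx \in \mathcal{J}$ means all the inner products $\la \bx, \bone_{\ccs_l}\ra$ are equal, say to a common value $\mu$. Then $\la \bx, \bone_{\ccs_1} - \bone_{\ccs_l}\ra = \mu - \mu = 0$ for every $l$, so $\mathcal{J} \subseteq \lspan\{\bone_{\ccs_1} - \bone_{\ccs_l}: 2 \le l \le \ks\}^\perp$. Conversely, if $\bx$ is orthogonal to all the differences $\bone_{\ccs_1} - \bone_{\ccs_l}$, then $\la \bx, \bone_{\ccs_l}\ra = \la \bx, \bone_{\ccs_1}\ra$ for all $l \ge 2$, i.e.\ $\bx \in \mathcal{J}$; this gives the reverse inclusion, and hence the first equality. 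So the first identity is essentially a definitional unpacking and requires no real work.

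For the second equality it suffices to show $\lspan\{\bone_{\ccs_1} - \bone_{\ccs_l}: 2 \le l \le \ks\} = \lspan\{\ks\bone_{\ccs_l} - \be : l \in [\ks]\}$, since equal subspaces have equal orthogonal complements. I would argue this by showing each generator of one span lies in the other. Note that since the $\ccs_l$ partition $\V$, we have $\sum_{l=1}^{\ks}\bone_{\ccs_l} = \be$. Hence $\ks\bone_{\ccs_l} - \be = \ks\bone_{\ccs_l} - \sum_{m=1}^{\ks}\bone_{\ccs_m} = \sum_{m \neq l}(\bone_{\ccs_l} - \bone_{\ccs_m})$, and each $\bone_{\ccs_l} - \bone_{\ccs_m}$ can in turn be written as $(\bone_{\ccs_1} - \bone_{\ccs_m}) - (\bone_{\ccs_1} - \bone_{\ccs_l})$, so $\ks\bone_{\ccs_l} - \be$ lies in the first span. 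For the other direction, observe that $(\ks\bone_{\ccs_1} - \be) - (\ks\bone_{\ccs_l} - \be) = \ks(\bone_{\ccs_1} - \bone_{\ccs_l})$, so each $\bone_{\ccs_1} - \bone_{\ccs_l}$ lies in the second span. This establishes that the two spans are equal, and the second identity follows.

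The only genuinely substantive point — and the one I would flag as the "obstacle," though it is minor — is making sure the inclusions are genuine equalities of subspaces; this is handled automatically by the symmetric containment arguments above, so no dimension count is strictly needed, but one could alternatively note that both spanning sets have rank $\ks - 1$ (the vectors $\ks\bone_{\ccs_l} - \be$ sum to zero and are otherwise independent, since the $\bone_{\ccs_l}$ are linearly independent as they have disjoint supports), confirming $\dim \mathcal{J} = |\V| - \ks + 1$. I expect the whole proof to be short; the main care is simply in using the partition property $\sum_l \bone_{\ccs_l} = \be$ at the right moment.
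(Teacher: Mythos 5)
Your proposal is correct and follows essentially the same route as the paper: the first equality by directly unpacking the definition, and the second by showing the two spanning sets generate the same subspace via the identities $\ks\bone_{\ccs_l}-\be=\sum_{m}(\bone_{\ccs_l}-\bone_{\ccs_m})$ (using the partition property) and $\bone_{\ccs_1}-\bone_{\ccs_l}=\frac{1}{\ks}\bigl((\ks\bone_{\ccs_1}-\be)-(\ks\bone_{\ccs_l}-\be)\bigr)$. The extra dimension-count remark is harmless but, as you note, unnecessary.
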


\begin{proof}  The first equality follows immediately from definition since $\la \bx, \bone_{\ccs_1} \ra = \la \bx, \bone_{\ccs_l} \ra \Leftrightarrow \la \bx, \bone_{\ccs_1} - \bone_{\ccs_l} \ra = 0$.  We focus on the second equality.  To do so, it suffices to show that every vector of the form $\ks \bone_{\ccs_l} -\be$, $l\in [\ks]$, is in the span of $\bone_{\ccs_1} - \bone_{\ccs_l}$, $2\le l\le \ks $, and vice versa.

First, we have $\bone_{\ccs_i} - \bone_{\ccs_j} = (\bone_{\ccs_1} - \bone_{\ccs_j}) - (\bone_{\ccs_1} - \bone_{\ccs_i})$.  Then, note that $\ks \bone_{\ccs_l} -\be = \sum_{i=1}^{\ks} (\bone_{\ccs_l} - \bone_{\ccs_i})$.  Hence the vectors $\ks \bone_{\ccs_l} -\be $ lies in the linear span of vectors of the form $\bone_{\ccs_1} - \bone_{\ccs_l}$, $2\le l\le \ks $.

In the reverse direction, note that $\bone_{\ccs_1} - \bone_{\ccs_l} = \frac{1}{\ks}(\ks \bone_{\ccs_1} -\be) - \frac{1}{\ks}(\ks \bone_{\ccs_l} -\be)$.  In other words, every vector of the form $\bone_{\ccs_1} - \bone_{\ccs_l}$, $2\le l\le \ks$, lies in the linear span of vectors of the form $\ks \bone_{\ccs_l} -\be$, $l\in [\ks]$.  This establishes the second equality.
\end{proof} 

As an immediate consequence we have that:

\begin{proposition}\label{kernellemma}
The kernel  of $\xc$ is equal to  $ \mathcal{J}.$ 
\end{proposition}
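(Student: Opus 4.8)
The plan is to read off the kernel directly from the rank-one decomposition of $\xc$ already recorded in \eqref{eq:feasiblesolution}, namely $\xc = \frac{1}{\ks}\sum_{l=1}^{\ks}(\ks \bone_{\ccs_l} - \be)(\ks \bone_{\ccs_l} - \be)^T$, and then invoke Proposition~\ref{thm:jspace}. The one structural fact I would use is the standard observation that a positive semidefinite matrix written as a sum of rank-one terms $M = \sum_i \bv_i \bv_i^T$ has kernel exactly $\lspan\{\bv_i\}^\perp$: indeed $\bx^T M \bx = \sum_i \langle \bv_i, \bx\rangle^2$, which is a sum of squares, so it vanishes if and only if $\langle \bv_i, \bx\rangle = 0$ for every $i$, and since $M \succeq 0$ we have $\bx \in \ker M \Leftrightarrow \bx^T M \bx = 0$.

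Applying this with $\bv_l = \ks \bone_{\ccs_l} - \be$ (the overall factor $1/\ks > 0$ does not affect the kernel), I would conclude that
\[
\ker(\xc) = \lspan\{\ks \bone_{\ccs_l} - \be : l \in [\ks]\}^\perp.
\]
By the second equality in Proposition~\ref{thm:jspace}, the right-hand side is precisely $\mathcal{J}$, which finishes the proof.

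There is essentially no obstacle here; the statement is a one-line consequence of the preceding proposition together with the elementary kernel characterization for sums of rank-one PSD matrices. The only point worth stating carefully is that the decomposition of $\xc$ is genuinely a sum of PSD rank-one matrices (so that no cancellation can enlarge the kernel beyond the common null space of the $\bv_l$), and that the positive scalar $1/\ks$ in front is harmless.
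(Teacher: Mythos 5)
Your proof is correct and follows essentially the same route as the paper: both read the kernel off the rank-one decomposition $\xc = \frac{1}{\ks}\sum_l (\ks\bone_{\ccs_l}-\be)(\ks\bone_{\ccs_l}-\be)^T$ and then invoke Proposition~\ref{thm:jspace} to identify $\lspan\{\ks\bone_{\ccs_l}-\be : l\in[\ks]\}^\perp$ with $\mathcal{J}$. The only cosmetic difference is that the paper states $\range(\xc)=\lspan\{\ks\bone_{\ccs_l}-\be\}$ and takes orthogonal complements, whereas you justify the same fact via the sum-of-squares quadratic-form argument for PSD matrices.
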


\begin{proof} By definition of $\xc$ we have that 
${\rm range}(\xc)={\rm span}\{ \ks \bone_{\mathcal{C}_l}-\be: l \in [k]\}.$
Thus,
$${\rm ker}(\xc) ={\rm span}\{ \ks \bone_{\mathcal{C}_l}-\be: l \in [k]\}^\perp.$$

\end{proof}

\begin{proposition}\label{csexpnaded}
The following  statements are equivalent for a PSD matrix $Z \in \mathbb{R}^{|\V| \times |\V|}$:

\begin{itemize}
\item[(1)]  $  Z\in  {\rm span}( \xc)^\perp$.
\item[(2)] $Z \in \mathcal{L}$; that is, $Z(\ks\bone_{\ccs_l} -\be)=0,$ for all $l \in [\ks]$.
\item[(3)] $\range(Z) \subseteq \mathcal{J}$.
\item[(4)] $\bz_i \in \mathcal{J}$ for all $i \in \V$ -- here, $\bz_i$ are rows of $Z$.
\end{itemize} 
Moreover, we have that ${\rm rank}(\xc)+{\rm rank}(Z)=|\V|$ if and only if   $\range(Z) = \mathcal{J}$.

\end{proposition}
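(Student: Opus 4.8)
The plan is to establish the chain of equivalences $(1)\Leftrightarrow(2)\Leftrightarrow(3)\Leftrightarrow(4)$ and then the rank statement, leveraging the fact that $Z$ is PSD throughout.

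First I would prove $(1)\Leftrightarrow(2)$. By Proposition \ref{kernellemma}, $\mathrm{range}(\xc) = \mathrm{span}\{\ks\bone_{\ccs_l}-\be : l\in[\ks]\}$, so $\langle Z,\xc\rangle = 0$ expands, using $\xc = \tfrac{1}{\ks}\sum_l (\ks\bone_{\ccs_l}-\be)(\ks\bone_{\ccs_l}-\be)^T$, into $\tfrac{1}{\ks}\sum_l (\ks\bone_{\ccs_l}-\be)^T Z (\ks\bone_{\ccs_l}-\be) = 0$. Since $Z\succeq 0$, each summand is nonnegative, so the sum vanishes iff every term $(\ks\bone_{\ccs_l}-\be)^T Z (\ks\bone_{\ccs_l}-\be)$ vanishes; and for a PSD matrix $\bv^T Z \bv = 0$ iff $Z\bv = 0$ (write $Z = R^T R$, so $\bv^T Z\bv = \|R\bv\|^2$). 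This gives exactly condition $(2)$.

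Next, $(2)\Leftrightarrow(3)$: condition $(2)$ says every $\ks\bone_{\ccs_l}-\be$ lies in $\ker(Z)$, i.e. $\mathrm{span}\{\ks\bone_{\ccs_l}-\be : l\in[\ks]\} \subseteq \ker(Z)$. Taking orthogonal complements (and using that $Z$ is symmetric so $\mathrm{range}(Z) = \ker(Z)^\perp$) this is equivalent to $\mathrm{range}(Z) \subseteq \mathrm{span}\{\ks\bone_{\ccs_l}-\be\}^\perp = \mathcal{J}$, where the last equality is Proposition \ref{thm:jspace}. The equivalence $(3)\Leftrightarrow(4)$ is immediate: the rows of $Z$ (equivalently, since $Z$ is symmetric, its columns) span $\mathrm{range}(Z)$, so $\mathrm{range}(Z)\subseteq\mathcal{J}$ iff every row $\bz_i\in\mathcal{J}$. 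Finally, for the rank statement: from $(3)$ we always have $\mathrm{range}(Z)\subseteq\mathcal{J}$ once $Z\in\mathrm{span}(\xc)^\perp$, and by Proposition \ref{kernellemma}, $\dim\mathcal{J} = |\V| - \mathrm{rank}(\xc)$. Hence $\mathrm{rank}(Z) = \dim\mathrm{range}(Z) \le \dim\mathcal{J} = |\V|-\mathrm{rank}(\xc)$, with equality iff $\mathrm{range}(Z) = \mathcal{J}$.

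I do not anticipate a serious obstacle here — the only point requiring care is the PSD step in $(1)\Leftrightarrow(2)$, where one passes from "sum of nonnegative quadratic forms is zero" to "each $Z\bv=0$"; this is the place where positive semidefiniteness is genuinely used and should be spelled out rather than asserted. Everything else is linear algebra (orthogonal complements, symmetry of $Z$) combined with the already-established description of $\ker(\xc)$ and $\mathcal{J}$.
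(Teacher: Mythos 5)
Your proposal is correct and follows essentially the same route as the paper's own proof: expand $\langle Z,\xc\rangle$ as a sum of nonnegative quadratic forms (using $Z\succeq 0$) to get $(1)\Leftrightarrow(2)$, pass to orthogonal complements with Proposition \ref{thm:jspace} for $(2)\Leftrightarrow(3)$, and use symmetry of $Z$ for $(3)\Leftrightarrow(4)$. Your only additions are to spell out the PSD step ($\bv^T Z\bv=0 \Rightarrow Z\bv=0$) and the dimension count $\dim\mathcal{J}=|\V|-\mathrm{rank}(\xc)$ for the rank statement, which the paper leaves implicit.
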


\begin{proof}$(1)\Longleftrightarrow (2)$ We have that 
$$\la Z, \xc\ra=0 \iff \sum_l(\ks\bone_{\ccs_l}-\be)^T Z (\ks\bone_{\ccs_l}-\be)=0\iff Z(\ks\bone_{\ccs_l} -\be)=0, \  \forall l \in [\ks]$$ 
 where the last equivalence holds as  $Z\succeq 0$. 

$(2)\Longleftrightarrow (3)$ Note that (2) is equivalent to  
$$\ker(Z)\supseteq  \range(\xc)=\lspan\{ \ks \bone_{\ccs_l} -\be: l\in [\ks]\}.$$ 
Taking orthogonal complements, this is in turn equivalent to 
$$\range(Z)\subseteq \lspan\{ \ks\bone_{\ccs_l} -\be: l\in [\ks]\}^\perp=\mathcal{J}.$$
Finally $(3) \iff (4)$ since $\range(Z)=\range(Z^T)$. 
\end{proof}

Our next result describes the spectrum of the matrix $\zz$ 
and  show that the columns of the matrix $\zz$ span the kernel of $\xc$. Recall that
\begin{equation}\label{zfullmatrix}
\zz := \left(\begin{array}{ccc}
\frac{I}{|\ccs_1|} & \frac{1}{|\ccs_1||\ccs_2|} \matrixones & \ldots \\
\frac{1}{|\ccs_1||\ccs_2|} \matrixones & \frac{I}{|\ccs_2|} & \ldots \\
\vdots & \vdots & \ddots
 \end{array}\right),
\end{equation}
This matrix can be alternatively expressed in a more convenient form:
$$
\zz = \mathrm{diag}(\bg) + \bg\bg^T - \sum_{l} \frac{\bone_{\ccs_l}\bone_{\ccs_l}^T}{|\ccs_l|^2} \quad \text{where} \quad \bg = \Big(\underbrace{\frac{1}{|\ccs_1|}, \ldots }_{|\ccs_1|} , \ldots,  \underbrace{\ldots, \frac{1}{|\ccs_{\ks}|} }_{|\ccs_{\ks}|} \Big)^T.
$$
To be clear, this is precisely the same matrix \eqref{zmatrix} we use as our dual certificate whenever $G$ is a disjoint union of cliques.  However, we will not require any information about $\zz$ being a dual certificate at this juncture.  
\begin{proposition} \label{thm:Unequalsizecanonicalmatrix_spectrum}
The eigenvalues of $\zz$ are $\sum_{l=1}^{\ks} 1/|\ccs_l|$ (with multiplicity one), $0$ (with multiplicity $\ks-1$), and $1/|\ccs_l|$ with multiplicity $|\ccs_l|-1$. Moreover, we have that 
$\range(\zz)=\mathcal{J}$.
\end{proposition}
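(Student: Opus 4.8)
The plan is to diagonalize $\zz$ explicitly by exploiting the clique block structure, and then to read off the range directly from the spectrum. I would start from the compact form $\zz = \diag(\bg) + \bg\bg^T - \sum_l \bone_{\ccs_l}\bone_{\ccs_l}^T/|\ccs_l|^2$ and split $\bbr^{|\V|} = W_1 \oplus W_2$, where $W_2 := \lspan\{\bone_{\ccs_l} : l \in [\ks]\}$ is the $\ks$-dimensional space of vectors that are constant on each clique and $W_1 := W_2^\perp$ is the space of vectors whose restriction to each $\ccs_l$ has zero sum, so that $\dim W_1 = \sum_l(|\ccs_l|-1) = |\V|-\ks$. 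A short check shows that $\zz$ maps each of $W_1, W_2$ into itself, so it suffices to analyze $\zz$ on each summand separately.

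On $W_1$: if $\bv$ is supported on a single clique $\ccs_l$ with $\la \bv,\bone_{\ccs_l}\ra = 0$, then $\diag(\bg)\bv = \bv/|\ccs_l|$ while both rank-one terms $\bg\bg^T$ and $\bone_{\ccs_l}\bone_{\ccs_l}^T/|\ccs_l|^2$ annihilate $\bv$, so $\zz\bv = \bv/|\ccs_l|$; ranging over $l$ and over a basis of each clique's zero-sum subspace yields $|\V|-\ks$ eigenvectors, which account for the eigenvalue $1/|\ccs_l|$ with multiplicity $|\ccs_l|-1$ for each $l$. On $W_2$: writing $\bw = \sum_l \alpha_l \bone_{\ccs_l}$, one computes $\diag(\bg)\bw = \sum_l (\alpha_l/|\ccs_l|)\bone_{\ccs_l}$, $\bg\bg^T\bw = (\sum_l \alpha_l)\bg$, and $\sum_l \bone_{\ccs_l}\bone_{\ccs_l}^T\bw/|\ccs_l|^2 = \sum_l (\alpha_l/|\ccs_l|)\bone_{\ccs_l}$; the first and third terms cancel, leaving $\zz\bw = (\sum_l \alpha_l)\bg$. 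Hence the $(\ks-1)$-dimensional subspace $\{\bw \in W_2 : \sum_l \alpha_l = 0\}$ lies in $\ker\zz$ (eigenvalue $0$, multiplicity $\ks-1$), and since $\bg = \sum_l (1/|\ccs_l|)\bone_{\ccs_l} \in W_2$ we get $\zz\bg = (\sum_l 1/|\ccs_l|)\bg$, the remaining eigenvalue, with multiplicity one. The multiplicities sum to $|\V|$, so this is the complete spectrum.

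For the range, since $\zz$ is symmetric its range is the span of the eigenvectors with nonzero eigenvalue, namely $W_1 \oplus \lspan\{\bg\}$, of dimension $|\V|-\ks+1$. Each of these eigenvectors has the same inner product with every $\bone_{\ccs_l}$ ($0$ for the $W_1$ generators and $1$ for $\bg$), so $\range(\zz) \subseteq \mathcal{J}$; equivalently $\ker(\zz) = \lspan\{\bone_{\ccs_1}-\bone_{\ccs_l} : 2 \le l \le \ks\}$, whose orthogonal complement is exactly $\mathcal{J}$ by Proposition \ref{thm:jspace}. Taking orthogonal complements (or just comparing dimensions) yields $\range(\zz) = \mathcal{J}$. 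I do not expect a genuine obstacle here: the only points needing care are the cancellation in the $W_2$ computation and the preliminary verification that $\zz$ preserves the splitting $W_1 \oplus W_2$, while the rest is bookkeeping with the factors $|\ccs_l|$.
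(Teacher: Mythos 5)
Your proposal is correct and follows essentially the same route as the paper: it exhibits the same eigenvectors (the clique-supported zero-sum vectors for the eigenvalues $1/|\ccs_l|$, the differences $\bone_{\ccs_1}-\bone_{\ccs_l}$ for the kernel, and $\bg$ for the top eigenvalue) and then identifies $\range(\zz)=\mathcal{J}$ via Proposition \ref{thm:jspace}. The only difference is presentational: you package the computation through the invariant splitting $W_1 \oplus W_2$, whereas the paper simply lists the eigenvectors and counts multiplicities.
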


\begin{proof}
First, note that the $\bg$ is an eigenvector whose eigenvalue is $\sum_{i=1}^{\ks} 1/|\ccs_i|$.  Second, note that the vector $\bone_{\ccs_1} - \bone_{\ccs_l} $ is an eigenvector with eigenvalue $0$, for all $2 \leq l \leq \ks$.  Third, fix a subset $\ccs_l$.  Let $\bx \in \mathbb{R}^{|V|}$ be a vector with entries in the coordinates corresponding to $\ccs_l$ satisfying $\be^T \bx = 0$.  One can check that $(\bg\bg^T - \sum_{l} (\bone_{\ccs_l}\bone_{\ccs_l}^T)/|\ccs_l|^2) \bx = 0$, and hence $\zz\bx = \mathrm{diag}(\bg) \bx = \bx / |\ccs_l|$.  The dimension of this eigenspace is $|\ccs_l|-1$.  Finally, we have that 
$$\ker(\zz)=\lspan\{\bone_{\ccs_1} - \bone_{\ccs_l}:  2\le l\le \ks \}$$
and using Proposition \ref{thm:jspace} we get that 
$$\range(\zz)=\lspan\{\bone_{\ccs_1} - \bone_{\ccs_l}:  2\le l\le \ks \}^\perp=\mathcal{J}.$$

\end{proof}

\subsection{Establishing extremality}

\begin{lemma} \label{thm:non_degen_Z} Consider  a graph $G$ with a  clique cover $\{\ccs_l\}_{l=1}^{\ks}$.  Let $\mathcal{S} \subset \V$ be a subset of vertices such that, for all cliques except one, $ \mathcal{S}$ leaves out at least one vertex.  Then, the columns of $\zz$ corresponding to $\mathcal{S}$ are linearly independent.
\end{lemma}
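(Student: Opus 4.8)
The goal is to show that the columns of $\zz$ indexed by $\mathcal{S}$ are linearly independent, under the hypothesis that $\mathcal{S}$ misses at least one vertex from every clique except possibly one distinguished clique, say $\ccs_1$. The natural approach is to take a linear combination of these columns that equals zero and argue that all coefficients vanish. Write the coefficient vector as $\bx \in \R^{|\V|}$ supported on $\mathcal{S}$; the condition is $\zz \bx = 0$, i.e.\ $\bx \in \ker(\zz)$. By Proposition~\ref{thm:Unequalsizecanonicalmatrix_spectrum}, $\ker(\zz) = \lspan\{\bone_{\ccs_1} - \bone_{\ccs_l} : 2 \le l \le \ks\}$, so $\bx = \sum_{l=2}^{\ks} \mu_l(\bone_{\ccs_1} - \bone_{\ccs_l})$ for some scalars $\mu_l$. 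Equivalently, $\bx$ is constant on each block $\ccs_l$, taking value $\nu_l$ on $\ccs_l$, with $\sum_l |\ccs_l| \nu_l = 0$ (the latter because $\ker(\zz) \perp \bg$-type relations; more directly, $\sum_{l \ge 2}\mu_l(\bone_{\ccs_1}-\bone_{\ccs_l})$ has block-$l$ value $-\mu_l$ for $l \ge 2$ and block-$1$ value $\sum_{l\ge 2}\mu_l$, and these sum to zero when weighted by block sizes).

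The key step is then to use the support constraint: $\bx$ is supported on $\mathcal{S}$, so for every clique $\ccs_l$ with $l \ge 2$, there is a vertex $v \in \ccs_l \setminus \mathcal{S}$, hence $x_v = 0$; but $\bx$ is constant on $\ccs_l$, so $\nu_l = 0$ for all $l \ge 2$. It remains to handle the distinguished clique $\ccs_1$: from $\sum_l |\ccs_l|\nu_l = 0$ and $\nu_l = 0$ for $l \ge 2$ we get $|\ccs_1|\nu_1 = 0$, hence $\nu_1 = 0$. Therefore $\bx = 0$, which means the only vanishing linear combination of the selected columns is the trivial one, establishing linear independence.

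\textbf{Main obstacle.} There is really only one subtlety to get right: correctly identifying $\ker(\zz)$ and translating "$\bx$ lies in this kernel and is supported on $\mathcal{S}$" into "$\bx$ is block-constant with a weighted-sum-zero constraint." Once that reformulation is in hand, the argument is a short pigeonhole on missing vertices. I would be careful to state precisely why a vector in $\lspan\{\bone_{\ccs_1} - \bone_{\ccs_l}: 2 \le l \le \ks\}$ is exactly a block-constant vector whose block values are weighted-orthogonal to the size vector $(|\ccs_1|,\dots,|\ccs_{\ks}|)$ — this is a clean linear-algebra fact (the span has dimension $\ks - 1$ inside the $\ks$-dimensional space of block-constant vectors, and $\bg \propto (1/|\ccs_l|)$ restricted to block values pairs against it to give the one missing coordinate), and it is worth a sentence rather than hand-waving. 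No perturbation or probabilistic input is needed here; this lemma is purely combinatorial/linear-algebraic and feeds into the extremality verification via Theorem~\ref{thm:extremal_rg}.
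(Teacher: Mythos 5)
Your overall route is sound and genuinely different from the paper's: you reduce linear independence of the selected columns to the statement that no nonzero vector supported on $\mathcal{S}$ lies in $\ker(\zz)$, and then combine the kernel description $\ker(\zz)=\lspan\{\bone_{\ccs_1}-\bone_{\ccs_l}\colon 2\le l\le \ks\}$ from Proposition \ref{thm:Unequalsizecanonicalmatrix_spectrum} with a pigeonhole on the missing vertices. The paper instead argues directly on the linear combination: it projects onto each block $\ccs_l$, notes that columns coming from other cliques project to multiples of the all-ones vector while columns from $\ccs_l$ itself project to multiples of standard basis vectors, and uses the missing vertex to kill the coefficients block by block, handling the distinguished clique last. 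Your reduction is shorter because the spectral analysis of $\zz$ has already been done, and since Proposition \ref{thm:Unequalsizecanonicalmatrix_spectrum} does not depend on this lemma there is no circularity; the paper's version is more self-contained in that it never needs the kernel computation.

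One intermediate claim in your write-up is false, though the proof survives with a one-line repair. A kernel vector $\bx=\sum_{l\ge2}\mu_l(\bone_{\ccs_1}-\bone_{\ccs_l})$ is indeed block-constant, with block values $\nu_1=\sum_{l\ge2}\mu_l$ and $\nu_l=-\mu_l$ for $l\ge2$, but these satisfy the \emph{unweighted} relation $\sum_l \nu_l=0$, not $\sum_l|\ccs_l|\nu_l=0$: for $\ks=2$, $|\ccs_1|=2$, $|\ccs_2|=3$ the kernel vector $(1,1,-1,-1,-1)^T$ has size-weighted block sum $2-3\ne 0$. Likewise, orthogonality to $\bg$ (entries $1/|\ccs_l|$, repeated $|\ccs_l|$ times per block) gives $\langle\bx,\bg\rangle=\sum_l|\ccs_l|\nu_l\cdot\tfrac{1}{|\ccs_l|}=\sum_l\nu_l$, again the unweighted sum, so your characterization ``block values weighted-orthogonal to the size vector'' is the wrong constraint (it only coincides with the correct one for equal-size cliques). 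Consequently your final step deducing $\nu_1=0$ should not invoke the weighted identity; but it is immediate anyway: once the missing-vertex argument gives $\nu_l=0$ for all $l\ge2$, you have $\mu_l=-\nu_l=0$ for all $l\ge2$, hence $\bx=0$ outright (in particular $\nu_1=0$), which is exactly the desired conclusion.
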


\begin{proof} Without loss of generality, we assume that for all cliques except $ \ccs_1$, the set of vertices $\mathcal{S}$ contains at most $ |\ccs_{l}|-1$ vertices from clique $\ccs_{l}$, for all $l\ne 1$.

We index the columns of $\zz$ by $(l,j)$, where the index $1\leq l \leq \ks$ refers to the clique while the index $j\in \ccs_l$ refers to the vertex within the clique.  Let $\{\bv_{l,j}\}$ be the columns of the matrix $\zz$ and consider a linear~combination:
\begin{equation}\label{eq:non_degen_Z_1}
\sum_{(l,j) \in \mathcal{S}} \theta_{l,j} \bv_{l,j} = 0.
\end{equation}
Fix a cluster $\ccs_l$, and let $P_l$ be the projection of $\mathbb{R}^{|\V|}$ onto the coordinates corresponding to $\ccs_l$.  By applying $P_l$ to \eqref{eq:non_degen_Z_1} we get:
\begin{equation}\label{eq:non_degen_Z_2}
\sum_{j : (l,j)\in \mathcal{S}} \theta_{l,j} P_l(\bv_{l,j}) = - \sum_{(l',j) \in \mathcal{S}, 1\leq l' \neq l \leq \ks} \theta_{l',j} P_l(\bv_{l',j}) =  c (1,\ldots,1)^T.
\end{equation}
The rightmost equality follows by noting that $P_l(\bv_{l',j}) = \be \in \mathbb{R}^{|\ccs_l|}$ for all $j$.  On the other hand, $P_l(\bv_{l,j}) = \be_j  \in \mathbb{R}^{|\ccs_l|}$ are standard basis vectors.  Since $\mathcal{S}$ leaves out at least one vector in $\ccs_l$, the span of these vectors cannot include $\be$.  So it follows from \eqref{eq:non_degen_Z_1} that $c=0$, and in particular, $\theta_{l,j} = 0$ for all $j$.  We repeat this argument for all $2 \leq l \leq \ks$ and conclude that $\theta_{l,j} = 0$ for all $l \geq 2$, and all $j$.

Finally, we consider the vectors $\bv_{1,j}$ and project these to the rows in $\ccs_1$.  The vectors $\{P_l(\bv_{1,j})\}$ are standard basis vectors.  Since $\sum_{(1,j) \in \mathcal{S}} \theta_{1,j} P_l(\bv_{1,j}) = 0$, it must be that $\theta_{1,j} = 0$ for all $j$.  Hence all the coefficients are zero.  This proves that the columns in $\mathcal{S}$ are linearly independent.
\end{proof}

\begin{lemma} \label{thm:non_degen_alg} Let $G = (\V,\E)$ be a graph that satisfies the $\cscc$ property for some $c<1$.  Then the collection of matrices 
$
\{ E_{i,j} \zz : (i,j) \in \E \} \cup \{ \zz \}
$ 
are linearly independent.
\end{lemma}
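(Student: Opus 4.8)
The plan is to show that a vanishing linear combination of the matrices in $\{ E_{i,j} \zz : (i,j) \in \mathcal{E}\} \cup \{ \zz \}$ forces all coefficients to be zero, by exploiting the block structure of $\zz$ together with the $\cscc$ hypothesis. Write a candidate dependence as $\alpha \zz + \sum_{(i,j) \in \mathcal{E}} \beta_{i,j} E_{i,j} \zz = 0$, i.e.\ $\big(\alpha I + \sum_{(i,j)\in\mathcal{E}} \beta_{i,j} E_{i,j}\big) \zz = 0$. Denote $M := \alpha I + \sum_{(i,j)\in\mathcal{E}} \beta_{i,j} E_{i,j}$; this is a symmetric matrix whose off-diagonal support is contained in $\mathcal{E}$ and whose diagonal is the constant $\alpha$. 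The equation says $M \zz = 0$, i.e.\ every column of $\zz$ lies in $\ker M$. By Proposition~\ref{thm:Unequalsizecanonicalmatrix_spectrum} the columns of $\zz$ span $\mathcal{J} = \range(\zz)$, so $M \zz = 0$ is equivalent to $\mathcal{J} \subseteq \ker M$, i.e.\ $M$ annihilates the $(|\V| - \ks + 1)$-dimensional space $\mathcal{J}$.

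**Using the $\cscc$ structure.** The heart of the argument is to pin down which matrices $M$ supported on $\mathcal{E}$ (plus a scalar diagonal) can kill all of $\mathcal{J}$. Since $\mathcal{J} = \lspan\{\bone_{\ccs_1} - \bone_{\ccs_l} : 2 \le l \le \ks\}^\perp$ contains, for each clique $\ccs_l$, all vectors supported on $\ccs_l$ with zero coordinate-sum (these are the eigenvectors of $\zz$ with eigenvalue $1/|\ccs_l|$ from the proposition), $M$ must annihilate each such vector. Fixing a clique $\ccs_l$ and a row $i$, for any two vertices $j, j' \in \ccs_l$ the vector $\be_j - \be_{j'}$ lies in $\mathcal{J}$, so $(M(\be_j - \be_{j'}))_i = 0$, giving $M_{i,j} = M_{i,j'}$. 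Thus within each clique block the rows of $M$ are constant; in particular the diagonal constraint $M_{i,i} = \alpha$ forces $M_{i,j} = \alpha$ for all $i,j$ in the same clique $\ccs_l$. But for $i, j$ in the same clique with $i \ne j$, the entry $M_{i,j}$ is part of the off-diagonal support, which must lie in $\mathcal{E}$ — that is consistent since $\ccs_l$ is a clique. The real leverage comes from applying $M$ to $\bg$ (which lies in $\mathcal{J}$) and to the remaining directions: I would show that constancy of $M$ on clique blocks, combined with $M \bg = 0$ and the fact that $M$ has at most $c|\ccs_l|$ nonzero entries in any row restricted to a clique it does not belong to (by $\cscc$ with $c < 1$, hence strictly fewer than $|\ccs_l|$), forces $\alpha = 0$ and then all $\beta_{i,j} = 0$. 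Concretely, summing the $i$-th equation $(M\bg)_i = 0$ over a clique and using that off-clique contributions in each row are too sparse to balance the within-clique contribution $\alpha \cdot |\ccs_l| \cdot \tfrac{1}{|\ccs_l|} = \alpha$ should yield $\alpha = 0$; once $\alpha = 0$, the within-clique entries of $M$ vanish, and then applying $M$ to indicator vectors $\bone_{\ccs_l} \in \mathcal{J}$ (note $\bone_{\ccs_l} - \bone_{\ccs_m} \in \mathcal{J}$ and one can build enough such combinations) isolates the cross-clique entries and forces $\beta_{i,j} = 0$.

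**The main obstacle.** The delicate point is the cross-clique part: after establishing $\alpha = 0$ and that $M$ is zero on every diagonal clique block, I must show the remaining cross-block entries of $M$ all vanish using only that $\mathcal{J} \subseteq \ker M$ and the $\cscc$ sparsity. This is where Lemma~\ref{thm:non_degen_Z} should be brought to bear, or an argument in its spirit: the key is that $\mathcal{J}$ has codimension only $\ks - 1$, so $\ker M$ is large, and a nonzero $M$ supported on the (sparse) cross-clique edges cannot have such a large kernel. I would pick, for each cross-edge $(i,j)$ with $i \in \ccs_l$, $j \in \ccs_m$, a vector in $\mathcal{J}$ that detects $M_{i,j}$ — e.g.\ a vector supported on $\ccs_l \cup \ccs_m$ that is constant on each of $\ccs_l$ and $\ccs_m$ with values chosen so it lies in $\mathcal{J}$ (any constant-on-each-clique vector with all $\ks$ clique-values equal does lie in $\mathcal{J}$, so one instead uses near-constant perturbations within $\ccs_l$) — and read off $M_{i,j} = 0$ from $(M\bv)_i$. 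The bookkeeping here, ensuring the test vectors genuinely lie in $\mathcal{J}$ and that the $\cscc$ bound $c < 1$ prevents degenerate cancellations, is the technical crux; everything else is linear algebra on block matrices. An alternative cleaner route is a dimension count: $M \zz = 0$ means $\mathrm{rank}(M) \le \ks - 1$, and one argues directly that a symmetric matrix with constant diagonal $\alpha$, support in $\mathcal{E}$, and $\cscc$-sparse cross-blocks has rank $\ge \ks$ unless it is zero; I would pursue whichever of these two framings the authors' earlier lemmas make shortest.
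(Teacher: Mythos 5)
Your reformulation is a genuinely different route from the paper's: the paper argues row by row, showing that the $m$-th row of a vanishing combination is a linear dependence among the columns of $\zz$ indexed by $\{m\}\cup\bigcup_{l'\neq l}N_{l'}(m)$ and then invoking Lemma \ref{thm:non_degen_Z}; you instead package the combination as $M\zz=0$ with $M=\alpha I+\sum_{(i,j)\in\E}\beta_{i,j}E_{i,j}$ and exploit $\range(\zz)=\mathcal{J}\subseteq\ker M$ through explicit test vectors. That route is viable, but as written the crux is not closed, and one step is invalid. First, your derivation of $\alpha=0$ rests on the claim that the off-clique contributions to $(M\bg)_i$ are ``too sparse to balance'' the within-clique contribution $\alpha$; sparsity says nothing about the magnitudes of the unknown cross-block entries, so this inference fails at the point where you make it (the cross entries have not yet been shown to vanish). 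Second, the cross-block step you yourself flag as the technical crux is left open: the test vectors you describe (constant on $\ccs_l\cup\ccs_m$) do not lie in $\mathcal{J}$ unless all clique values agree, ``near-constant perturbations'' is not an argument, and the alternative rank route (that a nonzero symmetric matrix with constant diagonal, off-diagonal support in $\E$, and $\cscc$-sparse cross-blocks must have rank at least $\ks$) is precisely what would need to be proved.

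The fix is a one-liner already contained in your own constancy observation: $(M(\be_j-\be_{j'}))_i=0$ for $j,j'\in\ccs_m$ gives $M_{i,j}=M_{i,j'}$ for \emph{every} row $i$, including $i\notin\ccs_m$. For such $i$, the $\cscc$ property with $c<1$ yields some $j_0\in\ccs_m$ with $(i,j_0)\notin\E$ (automatic when $|\ccs_m|=1$, since $e(i,\ccs_m)\le c<1$), so $M_{i,j_0}=0$ by the support constraint, and constancy forces $M_{i,j}=0$ for all $j\in\ccs_m$. Thus all cross-blocks of $M$ vanish \emph{first}; then $(M\bg)_i=\alpha$ for any $i$, so $M\bg=0$ gives $\alpha=0$ and the diagonal blocks (which you correctly showed are constant equal to $\alpha$) vanish, hence $M=0$; finally all $\beta_{i,j}=0$ because the matrices $E_{i,j}$ have pairwise disjoint supports. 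With the steps reordered this way your argument is complete, does not need Lemma \ref{thm:non_degen_Z} at all, and is arguably cleaner than the paper's proof.
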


\begin{proof}
Consider a linear combination  
$$
\sum_{(i,j) \in \E} \theta_{i,j} E_{i,j} \zz  + \theta \zz = 0.
$$
Fix a vertex $m \in \ccs_l$, and for all $l' \neq l$, we let $N_{l'}(m) \subset \ccs_{l'}$ denote the neighbors of vertex $m$ in $\ccs_{l'}$.  Then the $m$-th row of the above expression is given by:
$$
\sum_{\substack{ j\in N_{l'}(m), l'\ne m}} \theta_{m,j} \bz^\star_j + \theta \bz^\star_m=0,
$$
where $\bz_j^\star$ denotes the $j$-th row of $\zz$.  Now, since the vertex $m$ is not fully connected to clusters $\ccs_{l'}$ for all $l' \neq l$, the collection of vertices $\{N_{l'}(m)\}_{l' \neq m}\cup \{m\}$ satisfy the conditions of the subset $\mathcal{S}$ in Lemma \ref{thm:non_degen_Z}.  By Lemma \ref{thm:non_degen_Z}, the corresponding columns of $\zz$ are linearly independent, which means that $\theta = \theta_{m,j} = 0$ for all $j$.  The result follows by repeating this argument for all $m$.
\end{proof}

\begin{theorem} \label{thm:extremepoint}
Let $G = (\V, \E)$ be a graph, and let $\{\ccs_l\}_{l=1}^{k}$ be a clique cover for $G$.  Suppose $G$ satisfies the $\cscc$ property for some $c<1$.  Then $(\ac,\ks)$ is an extreme point of the feasible region of~\eqref{eq:lovasz_lambdamax}.
\end{theorem}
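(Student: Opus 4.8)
The plan is to invoke Theorem~\ref{thm:extremal_rg} with the LMI presentation of the feasible region of \eqref{eq:lovasz_lambdamax} already set up at the start of this section, namely $Q_0 = -J$, the matrix $I$ together with the family $\{E_{i,j} : (i,j) \in \E\}$ playing the role of $Q_1, \ldots, Q_n$, and the point $\by$ being the coordinates of $(\ac, \ks)$ so that $Q_0 + \sum_i y_i Q_i = \ks I + \ac - J = \xc$. By Theorem~\ref{thm:extremal_rg}, $(\ac, \ks)$ is an extreme point of the feasible region if and only if, for any matrix $U$ whose columns span $\ker(\xc)$, the vectors $\vectorize(I \cdot U)$ and $\vectorize(E_{i,j} U)$ for $(i,j) \in \E$ are linearly independent.

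First I would choose the cleanest spanning matrix for $\ker(\xc)$. By Proposition~\ref{kernellemma} we have $\ker(\xc) = \mathcal{J}$, and by Proposition~\ref{thm:Unequalsizecanonicalmatrix_spectrum} we have $\range(\zz) = \mathcal{J}$ as well; hence one may take $U = \zz$ itself (its columns span $\mathcal{J}$). With this choice, the vectors appearing in the linear independence test become exactly $\vectorize(\zz)$ and $\vectorize(E_{i,j}\zz)$ for $(i,j) \in \E$, and linear independence of these vectors is precisely equivalent to linear independence of the matrices $\{E_{i,j}\zz : (i,j) \in \E\} \cup \{\zz\}$ (the $\vectorize$ operation is a linear isomorphism). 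But this is exactly the content of Lemma~\ref{thm:non_degen_alg}, which holds under the hypothesis that $G$ satisfies the $\cscc$ property for some $c < 1$ --- precisely our hypothesis. Combining Theorem~\ref{thm:extremal_rg} with Lemma~\ref{thm:non_degen_alg} then yields that $(\ac, \ks)$ is an extreme point, completing the proof.

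The one subtlety I would be careful about is that Theorem~\ref{thm:extremal_rg} is stated for a matrix $U$ whose columns \emph{span} the kernel, not necessarily a basis; using $\zz$ is fine since $\range(\zz) = \mathcal{J} = \ker(\xc)$, and the linear independence conclusion is insensitive to whether the chosen spanning set is minimal (though of course for it to \emph{be} an extreme point the relevant vectors must be independent, which forces $\zz$ to effectively contribute a basis-worth of independent columns after the $E_{i,j}$-multiplications — this is consistent with Lemma~\ref{thm:non_degen_Z}). I do not anticipate a genuine obstacle here: the theorem is essentially a bookkeeping assembly of the three preceding results (Propositions~\ref{kernellemma} and~\ref{thm:Unequalsizecanonicalmatrix_spectrum}, and Lemma~\ref{thm:non_degen_alg}), with the only mild care being the identification of the abstract $U$ in Theorem~\ref{thm:extremal_rg} with the concrete $\zz$ and the translation between ``$\vectorize$ of matrices linearly independent'' and ``matrices linearly independent.''
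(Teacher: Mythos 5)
Your proposal is correct and follows essentially the same route as the paper: the same LMI presentation of the feasible region, the choice $U=\zz$ justified by Propositions~\ref{kernellemma} and~\ref{thm:Unequalsizecanonicalmatrix_spectrum} (so that the columns of $\zz$ span $\ker(\xc)=\mathcal{J}$), and the application of Theorem~\ref{thm:extremal_rg} via the linear independence of $\{E_{i,j}\zz : (i,j)\in\E\}\cup\{\zz\}$ from Lemma~\ref{thm:non_degen_alg}. Your remark about spanning sets versus bases is a fair point of care (and handled correctly), and you cite the extremality criterion Theorem~\ref{thm:extremal_rg} where the paper's proof nominally cites Theorem~\ref{thm:constraintqualification}, apparently a typo.
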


\begin{proof}[Proof of Theorem \ref{thm:extremepoint}]  Suppose $G$ satisfies the $\cscc$ property for some $c<1$.  By Lemma \ref{thm:non_degen_alg}, the collection of matrices $\{ E_{i,j} \zz : (i,j) \in \E \} \cup \{ \zz \}$ are linearly independent.  Hence the vectors in $\{ {\rm vec}( E_{i,j} \zz) : (i,j) \in \E \} \cup \{ {\rm vec}(\zz) \}$ are also linearly independent.  This means that the  following matrix formed by combining all the (vectorized) matrix product $E_{i,j} \zz$, ranging over all edges $(i,j) \in \E$, as well as $\zz$, has full column rank
$$
\left( \begin{array}{c|c|c|c}
\ldots & \mathrm{vec}( E_{i,j} \zz) & \ldots
& \mathrm{vec}(\zz)\end{array} \right)_{(i,j) \in \E}.
$$ 
Hence, by Theorem \ref{thm:constraintqualification}, $(\ac,\ks)$ is an extreme point of the feasible region of \eqref{eq:lovasz_lambdamax}.
\end{proof}

\section{Exact recovery in the case of  disjoint cliques} \label{sec:disjointunion}

In this section, we show that $(\ac,\ks)$ is the unique optimal solution to the Lov\'asz theta formulation~\eqref{eq:lovasz_lambdamax} when $G$ is a disjoint union of cliques.  In this simple case, $G$ satisfies the $\cscc$ condition with $c=0$.  Following the conclusions of Theorem \ref{thm:extremepoint}, $(\ac,\ks)$ is an extreme point of the feasible region of \eqref{eq:lovasz_lambdamax}.  Based on Theorem \ref{thm:constraintqualification}, it remains to produce a suitable dual witness $Z$ satisfying the requirements listed in Theorem \ref{thm:constraintqualification}.

As it turns out, the matrix $\zz$ satisfies all the requirements we need!  In fact, in the process of computing the spectrum of the matrix $\zz$ in Proposition \ref{thm:Unequalsizecanonicalmatrix_spectrum}, we have also verified that $\zz$ does indeed satisfy the requirements of Theorem \ref{thm:constraintqualification}.  We collect these conclusions below.
 
\begin{theorem} \label{thm:disjointcliques_uniquerecovery}
Let $G$ be the graph formed by the disjoint union of cliques $\{\ccs_l\}_{l=1}^{\ks}$.  Then the matrix $(\ac,\ks)$ is the unique solution of \eqref{eq:lovasz_lambdamax}.
\end{theorem}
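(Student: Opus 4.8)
The plan is to instantiate Theorem~\ref{thm:constraintqualification} with the primal feasible pair $(\ac,\ks)$, whose associated matrix is $X = \ks I + \ac - J = \xc$, and with the dual candidate $Z := \zz/\ks$. Since a disjoint union of cliques trivially satisfies the $\cscc$ property with $c = 0 < 1$, Theorem~\ref{thm:extremepoint} already guarantees that $(\ac,\ks)$ is an extreme point of the feasible region of~\eqref{eq:lovasz_lambdamax}. Hence, by Theorem~\ref{thm:constraintqualification}, it suffices to show that $\zz$ (rescaled to have trace one) is dual optimal and that it is strictly complementary to $\xc$; the uniqueness claim then follows immediately.

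First I would verify dual feasibility. Positive semidefiniteness \eqref{cond1} is precisely Proposition~\ref{thm:Unequalsizecanonicalmatrix_spectrum}, since all the listed eigenvalues of $\zz$ are nonnegative. For the support condition \eqref{eq:supportcondition}, observe that when $G$ is a disjoint union of cliques its edge set is exactly the set of within-clique pairs $\{i,j\}\subseteq\ccs_l$; since the $(l,l)$-th block of $\zz$ equals $I/|\ccs_l|$, which is diagonal, we get $(\zz)_{i,j}=0$ for every $(i,j)\in\E$. Finally $\tr(\zz)=\sum_l|\ccs_l|\cdot(1/|\ccs_l|)=\ks$, so $Z=\zz/\ks$ satisfies \eqref{eq:traceone}. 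To get optimality I would use complementary slackness: by Proposition~\ref{thm:Unequalsizecanonicalmatrix_spectrum}, $\range(\zz)=\mathcal{J}$, which by Proposition~\ref{kernellemma} equals $\ker(\xc)$, so $\xc\zz=0$ and $\la\xc,\zz\ra=0$. Since $\ac$ is supported on the edges of $G$, the same block argument gives $\la\ac,\zz\ra=0$; together with $\tr(Z)=1$ this makes the duality gap between $(\ac,\ks)$ and $Z$ equal to $\la\xc,Z\ra=0$, so weak duality forces both to be optimal. (Alternatively, using $\zz=\diag(\bg)+\bg\bg^T-\sum_l\bone_{\ccs_l}\bone_{\ccs_l}^T/|\ccs_l|^2$ one computes $\la J,\zz\ra=\ks+\ks^2-\ks=\ks^2$, so $\la J,Z\ra=\ks$ matches the primal value.)

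It then remains to check strict complementarity \eqref{cond3} for the pair $(\xc,\zz)$. The orthogonality $\la\xc,\zz\ra=0$ was just established, and since $\zz$ is PSD and lies in $\lspan(\xc)^\perp$, the final assertion of Proposition~\ref{csexpnaded} applies: $\range(\zz)=\mathcal{J}$ gives $\mathrm{rank}(\xc)+\mathrm{rank}(\zz)=|\V|$. (One can also read this off directly: $\mathrm{rank}(\zz)=|\V|-(\ks-1)$ from the multiplicity of the zero eigenvalue in Proposition~\ref{thm:Unequalsizecanonicalmatrix_spectrum}, and $\mathrm{rank}(\xc)=\ks-1$ from Propositions~\ref{thm:jspace} and~\ref{kernellemma}.) Applying Theorem~\ref{thm:constraintqualification} to the extreme point $(\ac,\ks)$ and the strictly complementary dual optimum $Z$ concludes that $(\ac,\ks)$ is the unique optimal solution of~\eqref{eq:lovasz_lambdamax}.

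I do not expect a genuine obstacle: the substantive work — identifying the spectrum and range of $\zz$ — was already done in Proposition~\ref{thm:Unequalsizecanonicalmatrix_spectrum}, and everything else is assembly. The one place where disjointness is truly used is the support condition \eqref{eq:supportcondition}: $\zz$ vanishes on every edge only because there are no inter-clique edges. This is precisely what breaks down for graphs that merely satisfy $\cscc$ (where $\zz$ is nonzero on inter-clique pairs), and is the reason the general argument must replace $\zz$ by its projection $\zc$.
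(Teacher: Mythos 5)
Your proposal is correct and follows essentially the same route as the paper: invoke Theorem~\ref{thm:constraintqualification} together with extremality from Theorem~\ref{thm:extremepoint} (via the $\cscc$ property with $c=0$), and verify that $\zz$, normalized to trace one, satisfies conditions \eqref{cond1}--\eqref{cond3} using Propositions~\ref{thm:Unequalsizecanonicalmatrix_spectrum} and~\ref{csexpnaded}. The only difference is that you spell out the weak-duality/complementary-slackness bookkeeping a little more explicitly than the paper does, which is harmless.
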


\begin{proof}
As we noted above, we simply need to check that the matrix $(1/\ks)\zz$ satisfies conditions \eqref{cond1} to \eqref{cond3}.  First, from Proposition \ref{thm:Unequalsizecanonicalmatrix_spectrum}, the matrix $\zz$ is PSD, which is \eqref{cond1}.  Second, all the edges of $G$ are within cliques.  The matrix $\zz$, when restricted to each clique $\ccs_l$, is the identity matrix, and thus satisfies the support condition \eqref{eq:supportcondition}.  It is easy to verify that all the rows of $\zz$ belong to~$\mathcal{J}$.  Hence, by Proposition \ref{csexpnaded}, $\zz\in {\rm span}(\xc)^\perp$, which is \eqref{eq:complementaryslackness}.  It is easy to see that $\mathrm{tr}(\zz) = \ks$ and $\langle \matrixones, \zz \rangle = (\ks)^2$, which shows  \eqref{eq:traceone}.  Last, by Proposition \ref{thm:Unequalsizecanonicalmatrix_spectrum}, we have $\range(\zz)=\mathcal{J}$.  Hence, by Proposition \ref{csexpnaded}, we have ${\rm rank}(\xc)+{\rm rank}(\zz)=|\V|$, which is~\eqref{cond3}.
\end{proof}

\section{Exact recovery for graphs with the $\cscc$ proeprty} \label{sec:proof-deterministicrecovery}

In this section, let $G$ be a graph that satisfies the $\cscc$ property.  Our goal is to extend the results in Section \ref{sec:disjointunion}, and show that $(\ac,\ks)$ is also the unique solution to \eqref{eq:lovasz_lambdamax} for graphs satisfying the $\cscc$ condition, provided $c$ is appropriately small.  Following our discussion in Section \ref{sec:disjointunion}, the matrix $\xc$ remains an extreme point of the feasible region of \eqref{eq:lovasz_lambdamax} if $c<1$.  Hence, by Theorem \ref{thm:constraintqualification}, it remains to produce a suitable dual witness $Z$ satisfying the requirements listed in Theorem \ref{thm:constraintqualification}.

\subsection{Incoherence-type result} \label{sec:incoherence}

The certificate we will use  for graphs with the  $\cscc$ property is   $\zc = P_{\tilde{\mathcal{K}} \cap \tilde{\mathcal{L}}} (\zz)$, defined as  the projection of $\zz$ onto $  {\tilde{\mathcal{K}} \cap \tilde{\mathcal{L}}}$ with respect to the Frobenius norm, i.e., 
$$\zc ~~:=~~ \underset{X}{\arg \min} ~\| X - \zz \|_F^2 \quad \mathrm{s.t.} \quad X \in \tilde{\mathcal{K}} \cap \tilde{\mathcal{L}}.$$
Using the first-order optimality conditions we have that 
\begin{equation} 
\zz - \zc = \tilde{L}  + \tilde{K},
\end{equation}
where $\tilde{L}$ lies in the normal cone of $\mathcal{L}$ at $Z'$ and $\tilde{K}$ lies in the normal cone of $\mathcal{K}$ at $Z'$.  Recalling that the normal cone of a subspace is its orthogonal complement we have that $\tilde{L}\in \tilde{\mathcal{L}}^\perp$ and $\tilde{K}\in \tilde{\mathcal{K}}^\perp$.  As a reminder, these orthogonal complements are defined with respect to $\mathcal{M}$ being the ambient space.

Our main result  can be viewed as the extension of orthogonality for graphs that are disjoint unions of cliques to graphs with the $\cscc$ property.  The proof is broken up into a sequence of results that follow.

\begin{theorem} 
 \label{thm:incoherence_1}
Let $G$ be a graph satisfying the $\cscc$ property.  Then, 
\begin{itemize}
\item[$(i)$] For any  $\tilde{K} \in \tilde{\mathcal{K}}^\perp$ and  $\tilde{L} \in \tilde{\mathcal{L}}^\perp$ we have   
$$
| \langle \tilde{K}, \tilde{L} \rangle | \leq 2\sqrt{c}\|\tilde{K}\|_F  \|\tilde{L}\|_F .
$$
\item[$(ii)$]  For any  $\tilde{L} \in \tilde{\mathcal{L}}^\perp$ we have 
$$\|P_{\mathcal{K}^\perp}(\tilde{L})\|_F \leq (2 \sqrt{c})^{1/2} \|\tilde{L}\|_F.$$
\end{itemize}
\end{theorem}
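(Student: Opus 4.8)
The plan is to prove both parts by reducing the inner product $\langle \tilde K, \tilde L\rangle$ to a sum of local contributions indexed by vertices $v$ and cliques $\ccs_l$ not containing $v$, where the $\cscc$ property limits the number of nonzero entries. First I would pin down explicit spanning sets for $\tilde{\mathcal K}^\perp$ and $\tilde{\mathcal L}^\perp$ inside the ambient space $\mathcal M$. For $\tilde{\mathcal K}^\perp$: since $\mathcal K = \lspan\{E_{i,j}:(i,j)\in\E\}^\perp$, restricting to $\mathcal M$ (symmetric, block-diagonal on each $(\ccs_l,\ccs_l)$ block) gives $\tilde{\mathcal K}^\perp = \lspan\{E_{i,j} : (i,j)\in\E,\ i,j \text{ in different cliques}\}$; so a generic $\tilde K$ is supported only on cross-clique edges that are actually present. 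For $\tilde{\mathcal L}^\perp$: by Proposition~\ref{csexpnaded}-type reasoning, $\lspan(\xc)^\perp$ intersected with $\mathcal M$ is spanned by matrices of the form $(\ks\bone_{\ccs_l}-\be)\bv^T + \bv(\ks\bone_{\ccs_l}-\be)^T$ projected into $\mathcal M$, but more usefully $\tilde{\mathcal L}^\perp = \lspan(\xc) \cap \mathcal M$ — and $\xc$ has all cross-clique blocks equal to the all-ones matrix $-\ks^{-1}\cdot$(const), while its diagonal blocks are constant-plus-something; the key point is that the off-diagonal structure of any $\tilde L \in \tilde{\mathcal L}^\perp$ on block $(l,l')$ is a rank-one pattern $\lambda_{l,l'}\bone\bone^T$ for scalars $\lambda_{l,l'}$ depending only on the pair of cliques.

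**The main estimate.** With these descriptions in hand, $\langle \tilde K,\tilde L\rangle = \sum_{l\neq l'} \langle \tilde K_{l,l'}, \tilde L_{l,l'}\rangle$ where $\tilde K_{l,l'}$ is supported on the edge set $e(\ccs_l,\ccs_{l'})$ and $\tilde L_{l,l'} = \lambda_{l,l'}\bone\bone^T$. Then $\langle \tilde K_{l,l'},\tilde L_{l,l'}\rangle = \lambda_{l,l'}\sum_{(i,j)\in e(\ccs_l,\ccs_{l'})}(\tilde K)_{i,j}$, and Cauchy–Schwarz over the at most $c\,|\ccs_l|\cdot|\ccs_{l'}|$ present edges (using $\cscc$ twice, once per side, or the cruder bound $|e(\ccs_l,\ccs_{l'})| \le c\min(|\ccs_l|,|\ccs_{l'}|)\cdot\max$) gives $|\langle \tilde K_{l,l'},\tilde L_{l,l'}\rangle| \le |\lambda_{l,l'}|\sqrt{|e(\ccs_l,\ccs_{l'})|}\,\|\tilde K_{l,l'}\|_F$. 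The factor $\sqrt{|e(\ccs_l,\ccs_{l'})|} \le \sqrt c \cdot \sqrt{|\ccs_l||\ccs_{l'}|}$, and $\|\tilde L_{l,l'}\|_F = |\lambda_{l,l'}|\sqrt{|\ccs_l||\ccs_{l'}|}$, so we land at $|\langle \tilde K_{l,l'},\tilde L_{l,l'}\rangle| \le \sqrt c\,\|\tilde K_{l,l'}\|_F\|\tilde L_{l,l'}\|_F$. Summing over ordered pairs $(l,l')$ and applying Cauchy–Schwarz to the resulting bilinear sum in $(\|\tilde K_{l,l'}\|_F)$ and $(\|\tilde L_{l,l'}\|_F)$ yields $|\langle\tilde K,\tilde L\rangle| \le \sqrt c \sum_{l\neq l'}\|\tilde K_{l,l'}\|_F\|\tilde L_{l,l'}\|_F \le \sqrt c \|\tilde K\|_F\|\tilde L\|_F$; the stated constant $2\sqrt c$ absorbs the fact that each unordered edge-pair or each symmetric off-diagonal block is counted twice and that $\|\tilde K\|_F^2 = \sum_{l\neq l'}\|\tilde K_{l,l'}\|_F^2$ only after accounting for symmetry.

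**Deducing part $(ii)$.** Part $(ii)$ follows from $(i)$ by a standard projection argument. Write $\tilde L = P_{\mathcal K}(\tilde L) + P_{\mathcal K^\perp}(\tilde L)$; I want to bound $\|P_{\mathcal K^\perp}(\tilde L)\|_F$. The point is that $P_{\mathcal K^\perp}(\tilde L) \in \mathcal K^\perp$, and since $\tilde L \in \mathcal M$ and $\mathcal K \subseteq \mathcal M$, this projection (within $\mathcal M$) lands in $\tilde{\mathcal K}^\perp$. Then
\begin{equation*}
\|P_{\mathcal K^\perp}(\tilde L)\|_F^2 = \langle P_{\mathcal K^\perp}(\tilde L), \tilde L\rangle = \langle \tilde K^\circ, \tilde L\rangle \le 2\sqrt c\,\|\tilde K^\circ\|_F\|\tilde L\|_F = 2\sqrt c\,\|P_{\mathcal K^\perp}(\tilde L)\|_F\|\tilde L\|_F,
\end{equation*}
where $\tilde K^\circ := P_{\mathcal K^\perp}(\tilde L)$, using that $\tilde L - P_{\mathcal K^\perp}(\tilde L) \perp \mathcal K^\perp$. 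Dividing through gives $\|P_{\mathcal K^\perp}(\tilde L)\|_F \le 2\sqrt c\,\|\tilde L\|_F$; if instead one is less careful with constants one gets $\|P_{\mathcal K^\perp}(\tilde L)\|_F \le (2\sqrt c)^{1/2}\|\tilde L\|_F$ as stated — the square root appears because the bound is quadratic in the quantity being estimated, so one takes a square root, and in the regime $c$ small $2\sqrt c < (2\sqrt c)^{1/2}$ would be the sharper statement; I would follow whichever normalization the paper's downstream perturbation argument actually needs and state it accordingly.

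**Main obstacle.** The delicate part is not the Cauchy–Schwarz chain but establishing the precise structure of $\tilde{\mathcal L}^\perp$ within $\mathcal M$ — specifically, that every off-diagonal block of a matrix in $\tilde{\mathcal L}^\perp$ is a scalar multiple of $\bone\bone^T$, and controlling how the diagonal-block parts interact (they must, since $\xc$ is not purely off-diagonal). I expect one needs to carefully use that $\xc = \ks I + \ac - J$ together with the description of $\range(\xc)$ from Proposition~\ref{kernellemma}, and to verify that the diagonal-block contributions either vanish by the $\mathcal M$-constraint (diagonal blocks of $\tilde K$ are zero since all intra-clique edges are present, hence excluded from $\tilde{\mathcal K}^\perp$... wait, they're *in* $\mathcal K^\perp$) or are handled separately. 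Getting the bookkeeping of the constant exactly right (the $2$ versus $4$ versus the outer square root) is the step most likely to require care, and is presumably why the threshold in Result~\ref{thm:result2} carries the factor $1/4$.
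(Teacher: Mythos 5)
There is a genuine gap, and it lies exactly where you flagged the ``main obstacle'': your structural description of $\tilde{\mathcal{L}}^\perp$ is too small. You assume that every $\tilde{L} \in \tilde{\mathcal{L}}^\perp$ has off-diagonal blocks of the form $\lambda_{l,l'}\bone\bone^T$, i.e.\ that $\tilde{\mathcal{L}}^\perp$ is essentially spanned by (the $\mathcal{M}$-part of) $\xc$. That is not the space the theorem is about. In the paper, $\tilde{\mathcal{L}}$ is operationally the subspace of $\mathcal{M}$ cut out by the linear conditions ``every row of $Z$ lies in $\mathcal{J}$'' (equivalently $Z(\ks\bone_{\ccs_l}-\be)=0$ for all $l$) -- this is what is needed downstream, since in Theorem \ref{thm:abcd-exact-recovery} the multiplier $\tilde{L}$ comes from the normal cone of that constraint set and $\zc$ is not yet known to be PSD, so one cannot pass from $\langle \zc,\xc\rangle=0$ to the row condition via Proposition \ref{csexpnaded}. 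Consequently $\tilde{\mathcal{L}}^\perp$ is the span of the constraint matrices $L_{x,y,z}$, a space of dimension on the order of $|\V|(k-1)$, and by Proposition \ref{thm:description_of_lperp} it decomposes as $\mathcal{T}_1\oplus\mathcal{T}_2$: the constant-block part $\mathcal{T}_1$ (which is all your argument covers, and for which the paper's Lemma \ref{thm:incoherence_Espan} gives the bound $\sqrt{c}$), plus the span $\mathcal{T}_2$ of the matrices $F_{x,y,z}$, whose off-diagonal blocks have the non-constant pattern $\bf_z\be^T$ (one distinguished row/column equal to $|\ccs_x|-1$, the rest $-1$), i.e.\ they live in $\mathcal{N}_C\oplus\mathcal{N}_R$. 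Handling $\mathcal{T}_2$ is the bulk of the paper's proof (Lemma \ref{thm:ncnr_space_incoherence}, Corollary \ref{thm:ncnr_combined}, Lemma \ref{thm:incoherence_efsum}), and combining the two components via the orthogonal-subspace Lemma \ref{thm:orthogonalsubspace_incoherence} is what produces the constant $2\sqrt{c}=\sqrt{2}\cdot\sqrt{2c}$ -- not the double counting of symmetric blocks that you invoke. Your Cauchy--Schwarz chain for constant blocks is fine as far as it goes, but it proves only the $\mathcal{T}_1$ half of part $(i)$; if instead one reads $\tilde{\mathcal{L}}^\perp$ literally as the one-dimensional space you describe, the resulting statement is too weak to drive the perturbation argument in Theorem \ref{thm:abcd-exact-recovery}.

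Your deduction of part $(ii)$ from part $(i)$ is essentially the paper's argument (self-adjointness and idempotence of the projection, then apply $(i)$ to $P_{\mathcal{K}^\perp}(\tilde{L})$, which is supported on cross-clique edges because $\tilde{L}\in\mathcal{M}$), and your observation that dividing through gives the stronger bound $2\sqrt{c}\,\|\tilde{L}\|_F$ rather than $(2\sqrt{c})^{1/2}\|\tilde{L}\|_F$ is correct; the paper instead uses the contraction $\|P_{\mathcal{K}^\perp}(\tilde{L})\|_F\le\|\tilde{L}\|_F$ and takes a square root. So part $(ii)$ stands once part $(i)$ does -- but part $(i)$ as proposed is incomplete until you identify $\tilde{\mathcal{L}}^\perp$ correctly and bound the inner product against the $\mathcal{T}_2$ (non-constant, $\mathcal{N}_C\oplus\mathcal{N}_R$) component.
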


\paragraph{A direct sum decomposition for  $\tilde{\mathcal{L}}^\perp$.}  The first  step is to provide a decomposition of the space $\tilde{\mathcal{L}}^\perp$ into simpler subspaces, on which it is easier to prove the near orthogonality property.  We use these results as basic ingredients to build up to our near orthogonality property later.

Define the matrices $\{ F_{x,y,z} : 1 \leq x \neq y \leq k, 1\leq z \leq |\ccs_x| \}$ so that (i) the $(\ccs_x,\ccs_x)$-th block is a diagonal matrix whose $z$-th entry is set equal to $-2(|\ccs_x|-1)$ and all remaining entries equal to $2$, and (ii) the $(\ccs_x,\ccs_y)$-th block ($(\ccs_y,\ccs_x)$-th block) is such that entries in the $z$-th row (column) are equal to $|\ccs_x|-1$ and all other entries equal to $-1$, (iii) all other entries are zero. As an example, in the case where  $k=2$ the matrix  $F_{1,2,1} $ is given by:
 %$$
\begin{equation} \label{eq:f121}
F_{1,2,1} := \left( \begin{array}{cccc|ccc}
-2(|\ccs_1|-1) &&&& |\ccs_1|-1 & \ldots & |\ccs_1|-1  \\
& 2 &&& -1 & \ldots & -1  \\
&& \ddots &&\vdots &&\vdots  \\
&&&2& -1 & \ldots & -1  \\
\hline
|\ccs_1|-1 & -1 & \ldots & -1 &&& \\
\vdots & \vdots & & \vdots &&& \\
|\ccs_1|-1 & -1 & \ldots & -1 &&& \\
\end{array} \right),
\end{equation}
where omitted entries are zero.  Second, consider the following subspaces:\medskip

\begin{table}[h]
\centering
\begin{tabular}{|c|c|c|}
\hline
Name & Description & Dimension \\
\hline \hline
$\mathcal{T}_{1}$ & $ \left \{ \left( \begin{array}{c|c|c|c|c} \gamma_1 I & \theta_{1,2} \matrixones & \theta_{1,3} \matrixones & \ldots & \theta_{1,n} \matrixones \\ \hline \theta_{2,1} \matrixones & \gamma_2 I & \theta_{2,3} \matrixones & \ldots & \theta_{2,n} \matrixones \\ \hline \theta_{3,1} \matrixones & \theta_{3,2} \matrixones & & & \\ \hline \vdots & \vdots & & & \\ \hline \theta_{n,1} \matrixones & \theta_{n,2} \matrixones & & & \gamma_n I \end{array}\right) : \sum \gamma_i + \sum \theta_{i,j} = 0 \right\}$ & ${k+1 \choose 2}-1$ \\ \hline
$\mathcal{T}_{2} $ & $ \mathrm{Span} \left\{ F_{x,y,z} : 1 \leq x \neq y \leq k, 1\leq z \leq |\ccs_x| \right\}$ & $(|V| - k) \times (k-1)$ \\ \hline
\end{tabular}
\end{table}

\begin{proposition} \label{thm:description_of_lperp} We have that 
$$\tilde{\mathcal{L}}^\perp=\mathcal{T}_{1}\oplus \mathcal{T}_{2}.$$
 
\end{proposition}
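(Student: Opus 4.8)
The plan is to show the two inclusions $\mathcal{T}_1 \oplus \mathcal{T}_2 \subseteq \tilde{\mathcal{L}}^\perp$ and $\dim(\tilde{\mathcal{L}}^\perp) \leq \dim(\mathcal{T}_1) + \dim(\mathcal{T}_2)$, together with $\mathcal{T}_1 \cap \mathcal{T}_2 = \{0\}$; the two dimension counts then force equality. Recall first that $\tilde{\mathcal{L}} = {\rm span}(\xc)^\perp \cap \mathcal{M}$, so $\tilde{\mathcal{L}}^\perp = \big({\rm span}(\xc)^\perp \cap \mathcal{M}\big)^\perp$, and since $\mathcal{M}$ is a subspace containing $\xc$ (its diagonal blocks are diagonal), this orthogonal complement, taken inside the ambient space $\mathcal{M}$, is $\big({\rm span}(\xc)^\perp \cap \mathcal{M}\big)^\perp = {\rm span}(\xc) \oplus \big(\mathcal{M}^\perp \cap \text{(symmetric matrices)}\big)$ — more carefully, working inside the symmetric matrices, $\tilde{\mathcal{L}}^\perp = P_{\mathcal{M}}\big({\rm span}(\xc)\big) + (\text{symmetric}) \cap \mathcal{M}$; since we will ultimately only ever pair $\tilde{L}$ against elements of $\mathcal{M}$, it is cleanest to regard $\mathcal{M}$ as the ambient inner-product space from the outset, so that $\tilde{\mathcal{L}}^\perp = \{ Y \in \mathcal{M} : \langle Y, W\rangle = 0 \ \forall W \in \mathcal{M} \text{ with } \langle W, \xc\rangle = 0\}$, which is simply the orthogonal projection of ${\rm span}(\xc)$-complement inside $\mathcal{M}$; equivalently $\tilde{\mathcal{L}}^\perp$ is the orthogonal complement \emph{within} $\mathcal{M}$ of $\{W \in \mathcal{M}: \langle W, \xc \rangle = 0\}$, which is the line spanned by $P_{\mathcal{M}}(\xc) = \xc$ — wait, that gives dimension $1$, which is wrong. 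The correct reading: $\tilde{\mathcal{L}} = {\rm span}(\xc)^\perp \cap \mathcal{M}$ is a \emph{codimension-one} subspace of $\mathcal{M}$, so $\tilde{\mathcal{L}}^\perp$ (inside $\mathcal{M}$) is one-dimensional. That contradicts the table. Hence $\tilde{\mathcal{L}}^\perp$ must be taken inside the full space of symmetric matrices, and the content of the proposition is the \emph{concrete description} of $\big({\rm span}(\xc)\big) + \mathcal{M}^\perp$ intersected appropriately. So I would instead proceed as follows.

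First I would compute $\mathcal{M}^\perp$ (orthogonal complement inside symmetric matrices): it consists of symmetric matrices supported on the off-diagonal positions of the diagonal blocks $(\ccs_l,\ccs_l)$ and zero elsewhere. Then I would observe that $\tilde{\mathcal{L}}^\perp$, as a subspace of symmetric matrices, equals $\mathcal{M}^\perp + P_{\mathcal{M}}({\rm span}(\xc))$ — no; rather, since $\tilde{\mathcal{L}} = {\rm span}(\xc)^\perp \cap \mathcal{M}$, we have $\tilde{\mathcal{L}}^\perp = \overline{{\rm span}(\xc)^\perp \cap \mathcal{M}}^{\,\perp} = \big({\rm span}(\xc) + \mathcal{M}^\perp\big)$ is false in general but \emph{is} true because ${\rm span}(\xc)^\perp + \mathcal{M}^\perp$ is closed; by the identity $(U \cap W)^\perp = U^\perp + W^\perp$ we get precisely $\tilde{\mathcal{L}}^\perp = {\rm span}(\xc) + \mathcal{M}^\perp$. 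This is the key structural identity, and I would state and prove it first. From here the proposition becomes: $\mathcal{T}_1 \oplus \mathcal{T}_2 = {\rm span}(\xc) + \mathcal{M}^\perp$.

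For this final equality I would argue membership and dimension. For membership: $\mathcal{T}_1$ contains $\xc$ (check: $\xc$ has the block form with $\gamma_l I$ on the diagonal blocks after accounting for the $-J$ term — actually one must verify the diagonal-block entries of $\xc$ are scalar multiples of $I$ off the diagonal of each block are constant, which follows from \eqref{eq:feasiblesolution}), and $\mathcal{T}_1$ also visibly contains a complement of ${\rm span}(\xc)$ inside the ``block-constant'' matrices, while $\mathcal{T}_2$ — I would check directly that each $F_{x,y,z}$ lies in ${\rm span}(\xc) + \mathcal{M}^\perp$ by exhibiting its $\mathcal{M}$-component as a combination of $\xc$-type pieces and its $\mathcal{M}^\perp$-component as the off-diagonal corrections inside block $(\ccs_x,\ccs_x)$. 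This is the routine but slightly laborious bookkeeping step. For the reverse inclusion and exactness of the direct sum, I would count dimensions: $\dim \mathcal{T}_1 = \binom{k+1}{2} - 1$ by the single linear constraint on $\binom{k+1}{2}$ free parameters $\{\gamma_i\} \cup \{\theta_{i,j}\}_{i<j}$ (using symmetry $\theta_{i,j}=\theta_{j,i}$ — I should double-check whether the table intends $\theta_{i,j}$ independent of $\theta_{j,i}$; the displayed matrix suggests they may differ, in which case the parameter count and the stated dimension must be reconciled, and this is a point to be careful about), $\dim \mathcal{T}_2 = (|V|-k)(k-2)$ as asserted (which I would verify by showing the $F_{x,y,z}$ satisfy exactly the right number of independent linear relations — for fixed $(x,y)$ the relation $\sum_z F_{x,y,z} = 0$ holds, giving $|\ccs_x|-1$ independent matrices per ordered pair, wait that overcounts, so the correct relation structure needs care), $\dim \mathcal{M}^\perp = \sum_l \binom{|\ccs_l|}{2}$, and $\dim {\rm span}(\xc) = 1$, and I would confirm $\mathcal{T}_1 \cap \mathcal{M}^\perp = \{0\}$ so that the total matches.

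The main obstacle I anticipate is precisely this dimension/independence accounting for $\mathcal{T}_2$: understanding exactly which linear combinations of the $F_{x,y,z}$ vanish, reconciling the stated dimension $(|V|-k)(k-2)$ with the naive count $\sum_{x\neq y}|\ccs_x| = (k-1)|V|$ of generators (so there are many relations, presumably $\sum_z F_{x,y,z} = 0$ for each ordered pair plus cross-pair relations of the form $F_{x,y,z} - F_{x,y',z}$ being independent of $y$ in some component), and making sure $\mathcal{T}_1 + \mathcal{T}_2$ is genuinely a direct sum. Everything else — the identity $(U\cap W)^\perp = U^\perp + W^\perp$, the description of $\mathcal{M}^\perp$, and membership of $\xc$ in $\mathcal{T}_1$ — is straightforward linear algebra once set up correctly.
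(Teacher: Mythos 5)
You correctly sensed that the literal definition $\tilde{\mathcal{L}} = {\rm span}(\xc)^\perp\cap\mathcal{M}$ cannot be the subspace the proposition is about (as you note, its orthogonal complement inside the ambient space $\mathcal{M}$ would be one-dimensional), but your resolution — taking the complement in the full space of symmetric matrices and reducing the proposition to the identity $\mathcal{T}_1\oplus\mathcal{T}_2 = {\rm span}(\xc)+\mathcal{M}^\perp$ — replaces the statement with one that is false, so the proposed proof cannot be completed. Indeed, every matrix in $\mathcal{T}_1\oplus\mathcal{T}_2$ lies in $\mathcal{M}$, so membership in ${\rm span}(\xc)+\mathcal{M}^\perp$ would force its projection onto $\mathcal{M}$ to be a scalar multiple of $P_{\mathcal{M}}(\xc)$, whose off-diagonal blocks are all equal to $-E$ and whose diagonal blocks are all $(\ks-1)I$. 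But $\mathcal{T}_1$ contains matrices whose off-diagonal blocks carry distinct constants (for instance $\gamma\equiv 0$, $\theta_{1,2}=-\theta_{1,3}\neq 0$ when $\ks\ge 3$, or $\gamma_1=-\gamma_2$, $\theta\equiv 0$ when $\ks=2$), which are not multiples of $P_{\mathcal{M}}(\xc)$. Hence ${\rm span}(\xc)+\mathcal{M}^\perp$ meets $\mathcal{T}_1\oplus\mathcal{T}_2$ in at most a line, and the two spaces are genuinely different; the dimension and membership bookkeeping you outline would run into this contradiction rather than close the argument.

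The subspace the proposition actually concerns — the one used throughout the rest of the argument, e.g.\ when Proposition \ref{csexpnaded}(4) is invoked for $\zc\in\tilde{\mathcal{L}}$ — is $\tilde{\mathcal{L}} = \{W\in\mathcal{M} : W(\ks\bone_{\ccs_l}-\be) = 0 \ \text{for all } l\in[\ks]\}$, equivalently the matrices in $\mathcal{M}$ all of whose rows lie in $\mathcal{J}$; for PSD matrices this coincides with the condition $\langle W,\xc\rangle=0$ by Proposition \ref{csexpnaded}, but as a linear space it is strictly smaller than ${\rm span}(\xc)^\perp\cap\mathcal{M}$, and its complement is taken inside the ambient space $\mathcal{M}$ (so it has large codimension, consistent with the table). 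With this reading the paper's proof is short and does not need any of the dimension counts you anticipate: since the diagonal blocks of $W\in\mathcal{M}$ are diagonal, the defining constraints of $\tilde{\mathcal{L}}$ are exactly $\langle W, L_{x,y,z}\rangle = 0$ for the explicit matrices $L_{x,y,z}\in\mathcal{M}$ (the constraint that row $z$ of clique $x$ has equal sums against $\bone_{\ccs_x}$ and $\bone_{\ccs_y}$), hence $\tilde{\mathcal{L}}^\perp = {\rm span}\{L_{x,y,z}\}$; one then checks that $\mathcal{T}_1$ and $\mathcal{T}_2$ are orthogonal and writes each $L_{x,y,z}$ explicitly as the sum of an element of $\mathcal{T}_1$ and an element of $\mathcal{T}_2$, as the paper does for $L_{1,2,1}$. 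Your side question about the parameters of $\mathcal{T}_1$ is resolved by symmetry ($\theta_{i,j}=\theta_{j,i}$), but that is minor compared with the misidentification of $\tilde{\mathcal{L}}$, which is the real gap.
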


\begin{proof}[Proof of Proposition \ref{thm:description_of_lperp}]
The fact that   $\mathcal{T}_{1}$ and $\mathcal{T}_{2}$ are  orthogonal is easy to check. Define the matrix $L_{x,y,z} \in \mathbb{R}^{|\V|\times|\V|}$, where $1 \leq x \neq y \leq k$, and $1 \leq z \leq |\ccs_x|$ such that (i) the $z$-th entry of the $(\ccs_x,\ccs_x)$-th block is equal to $2$, and (ii) the entries of the $z$-th row (column) of the $(\ccs_x,\ccs_y)$-th block ($(\ccs_y,\ccs_x)$-th block) are all equal to $-1$, and (iii) all other entries are $0$.

As an example, in the case of two cliques (so $k=2$)  the  matrix $L_{1,2,1} $ is given by:
$$
L_{1,2,1} := \left( \begin{array}{cc|ccc} 
2 & & -1 & \ldots & -1  \\
& & & &  \\
\hline
-1 & & & &   \\
\vdots & & & &  \\
-1 & & & &   \\
\end{array}\right),
$$
where all omitted entries are zero. 

First, observe that the matrices $\{ L_{x,y,z} \}$ specify all the linear equalities in the subspace $\tilde{\mathcal{L}}$, and thus, $\tilde{\mathcal{L}}^\perp$ is precisely the span of $\{ L_{x,y,z} \}$.  As such, to show that   $\mathcal{T}_{1}$ and $\mathcal{T}_{2}$  span $\tilde{\mathcal{L}}^\perp$, it suffices to show that every matrix $L_{x,y,z}$ is expressible as the sum of matrices belonging to each of these subspaces.  
As a concrete example, we show this is true for   $L_{1,2,1}$ -- the construction for other matrices are similar.
Indeed, one checks that $L_{1,2,1}$ is the linear sum of these matrices
$$ 
L_{1,2,1}= \underbrace{\frac{1}{|\ccs_1|} \left( \begin{array}{c|c|c|c} 2I & -\matrixones & 0 & \ldots \\ \hline -\matrixones & 0 & \ldots & \\ \hline 0 & \vdots & \ddots & \\ \hline \vdots & & & \end{array}\right) }_{\mathcal{T}_1}
 -\frac{1}{|\ccs_1|} \underbrace{F_{1,2,1}}_{\mathcal{T}_2}.$$
This completes the proof.
\end{proof}

In view of Proposition \ref{thm:description_of_lperp}, to bound the inner product between vectors  in $\tilde{\mathcal{K}}^\perp$ and $\tilde{\mathcal{L}}^\perp$ respectively, it suffices to bound inner product between $ (i) \ \tilde{\mathcal{K}}^\perp$  and $\mathcal{T}_1$ and $ (ii) \ \tilde{\mathcal{K}}^\perp$  and $\mathcal{T}_2$.

Next, we describe a result that shows how incoherence computation for (a small number of) orthogonal subspaces can be put together to obtain incoherence computations.  

\begin{lemma}\label{thm:orthogonalsubspace_incoherence}  Let  $\{ \mathcal{T}_i \}_{i=1}^{r}$ be orthogonal subspaces of $\R^d$. Consider $s\in \R^d$ such that 
$$
| \langle s,t_i \rangle | \leq \epsilon \| s \|_2 \| t_i \|_2 \quad \text{ for all }  t_i \in \mathcal{T}_i.
$$
Then, for $t\in \oplus_i \mathcal{T}_i $ we have that 
$$
| \langle s,t \rangle | \leq \epsilon \sqrt{r}  \| s \|_2 \| t \|_2.
$$
\end{lemma}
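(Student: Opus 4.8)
The plan is to prove Lemma~\ref{thm:orthogonalsubspace_incoherence} by writing $t \in \bigoplus_i \mathcal{T}_i$ as $t = \sum_{i=1}^r t_i$ with $t_i \in \mathcal{T}_i$, and then estimating $\langle s, t \rangle$ term by term using linearity. First I would observe that since the $\mathcal{T}_i$ are mutually orthogonal, the decomposition $t = \sum_i t_i$ is unique (the $t_i$ are just the orthogonal projections $P_{\mathcal{T}_i}(t)$), and moreover the Pythagorean identity gives $\|t\|_2^2 = \sum_i \|t_i\|_2^2$. By linearity of the inner product, $\langle s, t\rangle = \sum_i \langle s, t_i\rangle$, so by the triangle inequality and the hypothesis applied to each $t_i \in \mathcal{T}_i$,
\begin{equation}
|\langle s, t\rangle| \le \sum_{i=1}^r |\langle s, t_i\rangle| \le \epsilon \|s\|_2 \sum_{i=1}^r \|t_i\|_2.
\end{equation}

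Next I would convert the sum $\sum_i \|t_i\|_2$ into something involving $\|t\|_2$. The natural tool is the Cauchy--Schwarz inequality in $\R^r$ applied to the vectors $(1,1,\ldots,1)$ and $(\|t_1\|_2, \ldots, \|t_r\|_2)$, which yields $\sum_{i=1}^r \|t_i\|_2 \le \sqrt{r}\,\big(\sum_{i=1}^r \|t_i\|_2^2\big)^{1/2} = \sqrt{r}\,\|t\|_2$, where the last equality is the Pythagorean identity noted above. Combining this with the previous display gives $|\langle s,t\rangle| \le \epsilon \sqrt{r}\, \|s\|_2 \|t\|_2$, which is exactly the claim.

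There is essentially no serious obstacle here; the only point requiring a moment's care is making sure the hypothesis is being applied correctly. The hypothesis $|\langle s, t_i\rangle| \le \epsilon \|s\|_2 \|t_i\|_2$ is stated as holding \emph{for all} $t_i \in \mathcal{T}_i$, so it applies in particular to the specific components $P_{\mathcal{T}_i}(t)$ appearing in the decomposition — no genericity or normalization issue arises. One should also note the degenerate case $t_i = 0$ is trivially covered (both sides vanish), so the bound $\sum_i \|t_i\|_2 \le \sqrt{r}\|t\|_2$ does not require any $t_i$ to be nonzero. The whole argument is a two-line application of the triangle inequality followed by Cauchy--Schwarz, and the role of orthogonality of the $\mathcal{T}_i$ is precisely to license the identity $\|t\|_2^2 = \sum_i \|t_i\|_2^2$ that turns the $\ell^1$-type bound into the desired $\ell^2$-type bound.
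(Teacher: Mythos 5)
Your argument is correct and is essentially identical to the paper's proof: both decompose $t=\sum_i t_i$, apply the triangle inequality and the hypothesis termwise, then use Cauchy--Schwarz in $\R^r$ together with the Pythagorean identity $\|t\|_2^2=\sum_i\|t_i\|_2^2$ coming from orthogonality of the $\mathcal{T}_i$. Nothing further is needed.
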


\begin{proof}[Proof of Lemma \ref{thm:orthogonalsubspace_incoherence}]
For any  $t= \sum t_i\in \oplus \mathcal{T}_i$ we have 
$$| \langle s, t \rangle | = | \langle s, \sum_{i=1}^r t_i \rangle | \leq \sum_{i=1}^r | \langle s, t_i \rangle | \leq \epsilon \sum_{i=1}^r \| s \|_2 \| t_i \|_2 \leq \epsilon \sqrt{r} \|s\|_2(\sum_{i=1}^r \| t_i \|_2^2)^{1/2} = \epsilon\sqrt{r}\|s\|_2  \| t \|_2,$$
where   the second last inequality follows from Cauchy-Schwarz, while the last equality uses the fact that $\mathcal{T} = \oplus \mathcal{T}_i$.
\end{proof}

\paragraph{Bounding inner product between $\tilde{\mathcal{K}}^\perp$ and $\mathcal{T}_2$.}  Define the column vectors $\{\bf_i\}_{i=1}^{n}$ by
$$
\bf_i = n \be_i - \be = (\ldots, \underbrace{-1}_{i-1},\underbrace{n-1}_{i}, \underbrace{-1}_{i+1},\ldots)^T \in \mathbb{R}^{n}.
$$
Note that 
$$ \bf_i \be^T=\begin{pmatrix} \bf_i & \ldots & \bf_i\end{pmatrix} \text{ and } \be \bf_i^T=\begin{pmatrix} \bf_i^T \\ \vdots \\ \bf_i^T\end{pmatrix}.$$
With a slight abuse of notation, we define the subspace of matrices in $\mathbb{R}^{n_1 \times n_2}$ 
$$
\mathcal{N}_{C} ={\rm span}( \bf_1 \be^T,\ldots,  \bf_{n_1} \be^T ) \qquad \text{ and }  \qquad \mathcal{N}_{R}={\rm span}( \be \bf_1^T, \ldots, \be \bf_{n_2}^T ).
$$
The abuse of notation arises because the vectors $\bf_i$ in the definitions of $\mathcal{N}_{C}$ and $\mathcal{N}_{R}$ are different.  Note that because $\bf_i^T \be = 0$, the subspaces $\mathcal{N}_{C}$ and $\mathcal{N}_{R}$ have dimensions $n_1-1$ and $n_2-1$ respectively and are orthogonal.  $\mathcal{N}_{C}$ and $\mathcal{N}_{R}$ are relevant for our problem as   the block off-diagonal entries of any matrix in  $\mathcal{T}_{2}$ belong to $ \mathcal{N}_{C} \oplus \mathcal{N}_{R} = \{ X_C + X_R : X_C \in \mathcal{N}_{C}, X_C \in \mathcal{N}_{R} \} $.

\begin{lemma} \label{thm:ncnr_space_incoherence}
Let $K \in \mathbb{R}^{n_1\times n_2}$  such that each row  has at most $c n_2$ non-zero entries for some  $0 \leq c \leq 1$. Then, for any $L \in \mathcal{N}_{C}$   we have that 
$$| \langle K, L \rangle | \leq \sqrt{c} \| L \|_F \| K \|_F.$$
 The same conclusion holds  if each column  of $K$ has at most $c n_1$ non-zero entries and   $L\in \mathcal{N}_R$. 
\end{lemma}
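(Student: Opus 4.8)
The plan is to first make the subspace $\mathcal{N}_C$ completely concrete, then reduce $\langle K, L\rangle$ to a one–dimensional inner product and apply Cauchy--Schwarz twice. First I would observe that every generator $\bf_i = n_1\be_i - \be$ has coordinates summing to zero, so $\mathcal{N}_C = \lspan(\bf_1\be^T,\ldots,\bf_{n_1}\be^T) \subseteq \{\bv\be^T : \bv\in\R^{n_1},\ \bv^T\be=0\}$; since the right-hand side has dimension $n_1-1$, which matches the dimension of $\mathcal{N}_C$ already recorded in the paper, the two coincide. Hence any $L\in\mathcal{N}_C$ is of the form $L=\bv\be^T$ with $\bv^T\be = 0$ (equivalently, $L$ has all columns equal to a common zero-sum vector $\bv$).

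With this description in hand, write $\bu := K\be$ for the vector of row sums of $K$, i.e. $\bu_i = \sum_j K_{i,j}$. Then
$$\langle K, L\rangle = \tr(K^T\bv\be^T) = \be^T K^T\bv = \langle \bv, K\be\rangle = \langle \bv, \bu\rangle,$$
so by Cauchy--Schwarz $|\langle K,L\rangle|\le \|\bv\|_2\,\|\bu\|_2$. I would bound the two factors separately. For $\|\bu\|_2$: row $i$ of $K$ has at most $cn_2$ nonzero entries, so by Cauchy--Schwarz on that row, $\bu_i^2 = (\sum_j K_{i,j})^2 \le cn_2\sum_j K_{i,j}^2$; summing over $i$ gives $\|\bu\|_2^2 \le cn_2\|K\|_F^2$. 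For $\|\bv\|_2$: since $\|L\|_F^2 = \|\bv\be^T\|_F^2 = \|\bv\|_2^2\|\be\|_2^2 = n_2\|\bv\|_2^2$, we have $\|\bv\|_2 = \|L\|_F/\sqrt{n_2}$. Combining, $|\langle K,L\rangle| \le (\|L\|_F/\sqrt{n_2})\cdot\sqrt{cn_2}\,\|K\|_F = \sqrt{c}\,\|L\|_F\|K\|_F$. This is essentially the whole argument; the only points needing care are the identification of $\mathcal{N}_C$ with zero-sum rank-one matrices and keeping the two Cauchy--Schwarz applications (one across the entries of a single row, one across the rows) straight — I do not anticipate a genuine obstacle.

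Finally, the statement for columns and $\mathcal{N}_R$ follows by transposition: $L\in\mathcal{N}_R$ iff $L = \be\bw^T$ with $\bw^T\be = 0$, and then $\langle K,L\rangle = \langle \bw, K^T\be\rangle$ is the inner product of $\bw$ with the vector of column sums of $K$. The column-sparsity hypothesis gives $\|K^T\be\|_2^2 \le cn_1\|K\|_F^2$, while $\|L\|_F^2 = n_1\|\bw\|_2^2$, and the identical arithmetic yields $|\langle K,L\rangle|\le \sqrt{c}\,\|L\|_F\|K\|_F$.
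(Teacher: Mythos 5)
Your proposal is correct and follows essentially the same route as the paper: both arguments recognize that $L\in\mathcal{N}_C$ is a rank-one matrix with identical columns and then apply Cauchy--Schwarz twice, once within each row of $K$ (using the $cn_2$-sparsity of the rows) and once across the rows. The only cosmetic difference is that you package the second step as the inner product $\langle \bv, K\be\rangle$ of vectors, while the paper carries out the same estimate coordinate-wise with the weights $\theta_x$; your explicit identification of $\mathcal{N}_C$ with $\{\bv\be^T:\bv^T\be=0\}$ is a nice touch, though neither proof actually needs the zero-sum condition.
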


\begin{proof} [Proof of Lemma \ref{thm:ncnr_space_incoherence}]
Since $L \in \mathcal{N}_{C}$, all of its columns equal one another.  Suppose its column entries are $(\theta_1,\ldots,\theta_{n_1})$, i.e., $L_{x,y}=\theta_x$  for all $y$. We then have 
$$\langle K, L \rangle = \mathrm{tr}(KL^T) = \sum_{x} (\sum_{y} K_{x,y} L_{x,y}) = \sum_{x} \theta_x \sum_{y} K_{x,y} {\leq} \sum_{x} \theta_x \sqrt{cn_2} (\sum_{y} K_{x,y}^2)^{1/2},$$  where the last inequality follows from  Cauchy-Schwarz, and since   there are at most $c n_2$ non-zero entries in each column of $K$.  Finally, we  have 
$$\sqrt{cn_2} \sum_{x} \theta_x  (\sum_{y} K_{x,y}^2)^{1/2} \leq \sqrt{cn_2} (\sum_x \theta_x^2)^{1/2} (\sum_{x,y} K_{x,y}^2)^{1/2} = \sqrt{c} \|L\|_F \|K\|_F,$$
 where in the last equality we use  that $\|L\|_F = \sqrt{n_2}(\sum \theta_i^2)^{1/2}$. % Essentially the same proof gives us the result for $\mathcal{N}_{R}$.
\end{proof}

\begin{corollary} \label{thm:ncnr_combined}
Consider   $K \in \mathbb{R}^{n_1\times n_2}$ where  each column has at most $c n_1$ non-zero entries, and each row has at most $c n_2$ non-zero entries for some  $0 \leq c \leq 1$.  For any  $L \in \mathcal{N}_{C}\oplus \mathcal{N}_{R}$  we have: 
$$| \langle K, L \rangle | \leq \sqrt{2c} \| L \|_F \| K \|_F.$$
\end{corollary}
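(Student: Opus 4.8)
The plan is to combine the two cases of Lemma \ref{thm:ncnr_space_incoherence} using the orthogonal decomposition $\mathcal{N}_C \oplus \mathcal{N}_R$ together with Lemma \ref{thm:orthogonalsubspace_incoherence}. First I would recall that, by the remark preceding Lemma \ref{thm:ncnr_space_incoherence}, the subspaces $\mathcal{N}_C$ and $\mathcal{N}_R$ are orthogonal inside $\mathbb{R}^{n_1 \times n_2}$ (equipped with the Frobenius/trace inner product, which we may identify with the standard Euclidean inner product on $\mathbb{R}^{n_1 n_2}$). Under the hypotheses on $K$ — each column having at most $c n_1$ nonzeros and each row at most $c n_2$ nonzeros — Lemma \ref{thm:ncnr_space_incoherence} gives $|\langle K, L\rangle| \le \sqrt{c}\,\|K\|_F \|L\|_F$ for every $L \in \mathcal{N}_C$ (using the row-sparsity hypothesis), and also $|\langle K, L\rangle| \le \sqrt{c}\,\|K\|_F \|L\|_F$ for every $L \in \mathcal{N}_R$ (using the column-sparsity hypothesis).

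Next I would invoke Lemma \ref{thm:orthogonalsubspace_incoherence} with $r = 2$, the two orthogonal subspaces $\mathcal{T}_1 = \mathcal{N}_C$ and $\mathcal{T}_2 = \mathcal{N}_R$, the ambient space $\mathbb{R}^d$ with $d = n_1 n_2$, the fixed vector $s = K$, and the incoherence parameter $\epsilon = \sqrt{c}$. The hypothesis of that lemma is exactly what the previous paragraph established, so its conclusion yields $|\langle K, L\rangle| \le \sqrt{c}\,\sqrt{2}\,\|K\|_F\|L\|_F = \sqrt{2c}\,\|K\|_F\|L\|_F$ for every $L \in \mathcal{N}_C \oplus \mathcal{N}_R$, which is the claimed bound.

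There is essentially no obstacle here — the statement is a direct assembly of two already-proved ingredients, and the only points requiring a line of care are (i) checking that the orthogonality of $\mathcal{N}_C$ and $\mathcal{N}_R$ has indeed been recorded (it has, via $\bf_i^T\be = 0$), and (ii) matching the roles in Lemma \ref{thm:ncnr_space_incoherence}: the bound over $\mathcal{N}_C$ needs the \emph{row} sparsity of $K$ while the bound over $\mathcal{N}_R$ needs the \emph{column} sparsity, and the corollary's hypotheses supply both. One should also note that $\|K\|_F$ does not change when $K$ is viewed as a vector in $\mathbb{R}^{n_1 n_2}$, so the $\|\cdot\|_2$ appearing in Lemma \ref{thm:orthogonalsubspace_incoherence} and the $\|\cdot\|_F$ in the corollary agree. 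With these identifications the proof is two or three lines.
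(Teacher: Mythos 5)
Your proposal is correct and matches the paper's own proof: both apply Lemma \ref{thm:ncnr_space_incoherence} separately on $\mathcal{N}_C$ (row sparsity) and $\mathcal{N}_R$ (column sparsity), note the orthogonality of these subspaces via $\bf_i^T\be=0$, and combine with Lemma \ref{thm:orthogonalsubspace_incoherence} with $r=2$ to pick up the factor $\sqrt{2}$. No differences worth noting.
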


\begin{proof} [Proof of Corollary \ref{thm:ncnr_combined}]
 By Lemma \ref{thm:ncnr_space_incoherence} for  $ L \in \mathcal{N}_C,$ or $ L\in  \mathcal{N}_R$ we have that
$$| \langle K, L \rangle | \leq \sqrt{c} \| L \|_F \| K \|_F.$$
As $\be^T \bf_j=0$,  $\mathcal{N}_{C}$ and $\mathcal{N}_{R}$ are orthogonal subspaces. 
The result follows by Lemma \ref{thm:orthogonalsubspace_incoherence}.
\end{proof}

\begin{lemma} \label{thm:incoherence_efsum}
Suppose $G$ is a graph satisfying the $\cscc$ property.  %Let $L \in \mathbb{R}^{|\V| \times |\V|}$ be a matrix so that every $(\ccs_x,\ccs_y)$-th block, $x\neq y$, belongs to $\mathcal{N}_{C} \oplus \mathcal{N}_{R}$.  
Then, for any  $K \in \tilde{\mathcal{K}}^\perp$ and $L\in \mathcal{T}_2$ we  have that
$$| \langle K,L \rangle| \leq \sqrt{2c} \|K\|_F \|L\|_F.$$
\end{lemma}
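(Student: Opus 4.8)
The plan is to reduce the claim about a general $L\in\mathcal{T}_2$ to the block-wise incoherence estimate already proven in Corollary~\ref{thm:ncnr_combined}, exploiting that $K\in\tilde{\mathcal{K}}^\perp$ has a sparse support pattern inherited from the $\cscc$ property. First I would record the structural fact that matters about $\tilde{\mathcal{K}}^\perp$: since $\mathcal{K}={\rm span}\{E_{i,j}:(i,j)\in\E\}^\perp$ intersected with $\mathcal{M}$, its orthogonal complement $\tilde{\mathcal{K}}^\perp$ (taken inside $\mathcal{M}$) is spanned by the $E_{i,j}$ for $(i,j)\in\E$ together with the matrices needed to cut $\mathcal{M}$ down from the full symmetric space; concretely, every $K\in\tilde{\mathcal{K}}^\perp$ is supported on edges of $G$ — i.e.\ $K_{i,j}=0$ whenever $i\ne j$ and $(i,j)\notin\E$ — and the diagonal blocks $(\ccs_l,\ccs_l)$ of $K$ are diagonal matrices (that part lies in $\mathcal{M}^\perp$ and is killed when we pair against $L\in\mathcal{T}_2\subset\mathcal{M}$, which has zero diagonal blocks by construction). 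So for the inner product $\langle K,L\rangle$ only the off-diagonal blocks of $K$ matter, and in the $(\ccs_x,\ccs_y)$-block with $x\ne y$, $K$ restricted to that block has support contained in the bipartite edge set $e(\cdot,\cdot)$ between $\ccs_x$ and $\ccs_y$; by the $\cscc$ property each row of that block has at most $c|\ccs_y|$ nonzero entries and each column at most $c|\ccs_x|$ nonzero entries.

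Next I would use Proposition~\ref{thm:description_of_lperp}'s description of $\mathcal{T}_2$ — or rather the observation flagged just before Lemma~\ref{thm:incoherence_efsum}, that the block off-diagonal entries of any $L\in\mathcal{T}_2$ lie in $\mathcal{N}_C\oplus\mathcal{N}_R$ in each $(\ccs_x,\ccs_y)$-block. Writing $L=\sum_{x<y} L^{(x,y)}$ where $L^{(x,y)}$ collects the two symmetric off-diagonal blocks $(\ccs_x,\ccs_y)$ and $(\ccs_y,\ccs_x)$ (and is zero elsewhere — note the diagonal blocks of $F_{x,y,z}$ do contribute, so I would have to be slightly careful: either absorb those diagonal-block contributions, which are diagonal matrices, into a part of $L$ that pairs to zero against the diagonal-block part of $K$… actually $K$'s diagonal blocks are diagonal too, so this needs a short check), then $\langle K,L\rangle = \sum_{x<y}\langle K|_{(\ccs_x,\ccs_y)\cup(\ccs_y,\ccs_x)}, L^{(x,y)}\rangle$ plus a diagonal-block term. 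For each pair $(x,y)$ I apply Corollary~\ref{thm:ncnr_combined} (with $n_1=|\ccs_x|$, $n_2=|\ccs_y|$, and the sparsity bounds from $\cscc$) to the $(\ccs_x,\ccs_y)$-block and its transpose, obtaining $|\langle\cdot,\cdot\rangle|\le\sqrt{2c}\,\|K|_{(x,y)}\|_F\,\|L^{(x,y)}\|_F$ per block. Summing over pairs and applying Cauchy–Schwarz in the index $(x,y)$ — using that the blocks of $K$ are orthogonal across distinct pairs, so $\sum_{x<y}\|K|_{(x,y)}\|_F^2\le\|K\|_F^2$, and likewise $\sum\|L^{(x,y)}\|_F^2\le\|L\|_F^2$ — yields $|\langle K,L\rangle|\le\sqrt{2c}\,\|K\|_F\,\|L\|_F$.

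The main obstacle I anticipate is the bookkeeping around the diagonal blocks of $F_{x,y,z}$: the generators of $\mathcal{T}_2$ are \emph{not} purely block-off-diagonal, so the clean "restrict to $(\ccs_x,\ccs_y)$ blocks" decomposition of $L$ requires justifying that the diagonal-block parts either (a) pair to zero against $K$ because $K$'s diagonal blocks are diagonal while $F_{x,y,z}$'s diagonal block has its nonzero pattern only on the diagonal too — so this does \emph{not} automatically vanish — or (b) can be folded into the $\mathcal{N}_C\oplus\mathcal{N}_R$ estimate. I would resolve this by re-deriving, directly from the explicit form of $F_{x,y,z}$, that $\langle E_{i,j},F_{x,y,z}\rangle$ has the right magnitude relative to $\|F_{x,y,z}\|_F$, or more cleanly by noting that $\mathcal{T}_2$ is also spanned (modulo $\mathcal{T}_1$) by the $L_{x,y,z}$ from the proof of Proposition~\ref{thm:description_of_lperp}, which \emph{are} supported on a single row/column of one off-diagonal block plus one diagonal entry, and then checking the diagonal entry contributes nothing because it corresponds to a vertex-clique incidence with $v\in\ccs_x$, which is never an edge recorded in $K$'s off-diagonal support and is handled by $K$'s diagonal-block freedom. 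Once that technical point is pinned down the rest is the routine Cauchy–Schwarz aggregation described above.
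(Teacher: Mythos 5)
Your overall strategy---restrict attention to the off-diagonal blocks, apply Corollary \ref{thm:ncnr_combined} block by block using the row/column sparsity coming from the $\cscc$ property, and aggregate with Cauchy--Schwarz---is exactly the paper's argument. The gap lies in the one structural fact that makes the block decomposition clean: what the diagonal blocks of $K\in\tilde{\mathcal{K}}^\perp$ look like. You assert they are (possibly nonzero) diagonal matrices, and first claim the diagonal-block pairing dies because $\mathcal{T}_2$ has zero diagonal blocks; that is false ($F_{x,y,z}$ has a nonzero, diagonal $(\ccs_x,\ccs_x)$-block). You then correctly observe that ``diagonal against diagonal'' does \emph{not} automatically vanish, but you leave this unresolved, offering only vague fallbacks (magnitude estimates for $\langle E_{i,j},F_{x,y,z}\rangle$, or re-spanning $\mathcal{T}_2$ by the $L_{x,y,z}$ ``modulo $\mathcal{T}_1$'', which does not obviously produce elements of $\mathcal{T}_2$, and whose final step, ``handled by $K$'s diagonal-block freedom'', is not an argument). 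As written, the diagonal-block contribution to $\langle K,L\rangle$ is simply never bounded, so the claimed inequality is not established.

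The missing point is that the complement $\tilde{\mathcal{K}}^\perp$ is taken inside the ambient space $\mathcal{M}$ (this is precisely why the paper works in $\mathcal{M}$ rather than in all symmetric matrices): within $\mathcal{M}$ the constraints coming from within-clique edges are vacuous, so $\tilde{\mathcal{K}}^\perp={\rm span}\{E_{i,j}:(i,j)\in\E,\ i,j \text{ in different cliques}\}$. In particular every $K\in\tilde{\mathcal{K}}^\perp$ has \emph{entirely zero} diagonal blocks---its diagonal entries vanish because each $\be_i\be_i^T$ lies in $\tilde{\mathcal{K}}$, and its within-clique off-diagonal entries vanish because $K\in\mathcal{M}$---and $K$ is supported only on between-clique edges. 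With this in hand, the pairing of $K$ against the block-diagonal part of $L$ is zero for free, only the $(\ccs_x,\ccs_y)$-blocks with $x\neq y$ contribute, and your Corollary \ref{thm:ncnr_combined} plus Cauchy--Schwarz aggregation gives $|\langle K,L\rangle|\le\sqrt{2c}\,\|K\|_F\|L\|_F$, exactly as in the paper's proof.
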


\begin{proof} As $K \in \tilde{\mathcal{K}}^\perp$, its diagonal blocks are zero, and  also, entries corresponding to non-edges of $G$ are zero.  Moreover, as $G$ has the   $c$-SCC property, each  row (and column) of $K$  has at most $cn$ non-zero entries.
Let the block matrices be indexed by $(x,y)$, and let $L_{xy}$ and $K_{xy}$ denote the $xy$-th block.  Recall that  the block off-diagonal entries of any matrix in  $\mathcal{T}_{2}$ belong to %$\mathcal{N}_{C+R}$.
$ \mathcal{N}_{C} \oplus \mathcal{N}_{R}.$ By Corollary~\ref{thm:ncnr_combined} we have $| \langle K_{xy}, L_{xy} \rangle | \leq \sqrt{2c} \| L_{xy} \|_F \| K_{xy} \|_F$.  By summing over the blocks and by applying Cauchy-Schwarz, we have 
$$
 \begin{aligned}
  | \langle K , L \rangle | &  \leq \sum_{x,y} |\langle K_{xy} , L_{xy} \rangle |= \sum_{x\ne  y} |\langle K_{xy} , L_{xy} \rangle | \\
& \leq \sqrt{2c} \sum_{x\ne y} \| K_{xy} \|_F \| L_{xy} \|_F \leq \sqrt{2c} (\sum_{x,y} \| K_{xy} \|_F^2)^{1/2} (\sum_{x,y} \| L_{xy} \|_F^2)^{1/2} = \sqrt{2c} \| K \|_F \| L \|_F.
\end{aligned} $$
\end{proof}

\paragraph{Bounding  the inner product between $\tilde{\mathcal{K}}^\perp$ and $\mathcal{T}_1$.}  This case is easier and the required result is given in the next lemma. 
\begin{lemma}\label{thm:incoherence_Espan}
Suppose $G$ is a graph satisfying the $\cscc$ property.  Then, for any  $K \in \tilde{\mathcal{K}}^\perp$ and $L\in \mathcal{T}_1$ we  have that
$$|\langle K, L \rangle | \leq \sqrt{c} \| K \|_F \| L \|_F .$$
\end{lemma}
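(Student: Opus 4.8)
The plan is to exploit the very simple structure of matrices in $\mathcal{T}_1$: each diagonal block is a scalar multiple of the identity, and each off-diagonal block is a scalar multiple of the all-ones matrix $E$. Since $K \in \tilde{\mathcal{K}}^\perp$ has zero diagonal blocks (it is supported on non-edges, and non-edges only occur between distinct cliques), only the off-diagonal blocks contribute to $\langle K, L\rangle$. So I would write $L$ in block form with $(x,y)$-block equal to $\theta_{x,y} E$ for $x \neq y$, and reduce the inner product to $\langle K, L\rangle = \sum_{x \neq y} \theta_{x,y} \langle K_{xy}, E\rangle = \sum_{x\neq y}\theta_{x,y}(\sum_{i,j}(K_{xy})_{ij})$, i.e. a weighted sum of the total sums of the off-diagonal blocks of $K$.

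The main estimate is then: for a single off-diagonal block $K_{xy} \in \mathbb{R}^{|\ccs_x|\times|\ccs_y|}$ whose rows each have at most $c|\ccs_y|$ nonzero entries and whose columns each have at most $c|\ccs_x|$ nonzero entries (both guaranteed by the $\cscc$ property), I want a bound of the form $|\langle K_{xy}, E\rangle| \le \sqrt{c}\,\sqrt{|\ccs_x||\ccs_y|}\,\|K_{xy}\|_F$. This follows from Cauchy-Schwarz applied carefully to the support: $|\sum_{i,j}(K_{xy})_{ij}|$, when restricted to the at-most-$c|\ccs_x||\ccs_y|$ nonzero entries, is at most $\sqrt{c|\ccs_x||\ccs_y|}\cdot\|K_{xy}\|_F$; meanwhile $\|E\|_F = \sqrt{|\ccs_x||\ccs_y|}$, so $|\langle K_{xy}, E\rangle| \le \sqrt{c}\,\|E\|_F\|K_{xy}\|_F$. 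Since $L_{xy} = \theta_{xy}E$ gives $\|L_{xy}\|_F = |\theta_{xy}|\,\|E\|_F$, this reads $|\langle K_{xy}, L_{xy}\rangle| \le \sqrt{c}\,\|K_{xy}\|_F\|L_{xy}\|_F$. (In fact this is a direct consequence of Lemma \ref{thm:ncnr_space_incoherence} with the constant matrix $E$ viewed as lying in both $\mathcal{N}_C$ and $\mathcal{N}_R$, or just of the sparsity version of Cauchy-Schwarz — I would cite whichever is cleanest.)

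To finish, I would sum over the off-diagonal blocks and apply Cauchy-Schwarz once more across blocks, exactly mirroring the end of the proof of Lemma \ref{thm:incoherence_efsum}:
$$
|\langle K, L\rangle| \le \sum_{x\neq y} |\langle K_{xy}, L_{xy}\rangle| \le \sqrt{c}\sum_{x\neq y}\|K_{xy}\|_F\|L_{xy}\|_F \le \sqrt{c}\Big(\sum_{x,y}\|K_{xy}\|_F^2\Big)^{1/2}\Big(\sum_{x,y}\|L_{xy}\|_F^2\Big)^{1/2} = \sqrt{c}\,\|K\|_F\|L\|_F.
$$
I do not anticipate a serious obstacle here; the only point requiring a little care is making sure the sparsity hypothesis ($\cscc$) is correctly converted into the per-block row/column sparsity used in the Cauchy-Schwarz step, and noting that the constraint $\sum\gamma_i + \sum\theta_{i,j} = 0$ defining $\mathcal{T}_1$ plays no role in the bound (it only affects the dimension count). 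This lemma is strictly easier than Lemma \ref{thm:incoherence_efsum} because $\mathcal{T}_1$'s blocks are rank-one constant matrices rather than the richer elements of $\mathcal{N}_C\oplus\mathcal{N}_R$, which is why the constant is $\sqrt{c}$ rather than $\sqrt{2c}$.
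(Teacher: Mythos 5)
Your proposal is correct and follows essentially the same route as the paper's proof: a per-block Cauchy--Schwarz bound using the fact that the $\cscc$ property limits each off-diagonal block of $K$ to at most $c|\ccs_x||\ccs_y|$ nonzero entries, followed by a second Cauchy--Schwarz across blocks. Your observation that dropping the diagonal blocks of $L$ only helps is in fact a slightly cleaner handling of the final norm identity than the paper's.
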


\begin{proof}[Proof of Lemma \ref{thm:incoherence_Espan}]
Note that $K$ has no entries in each block diagonal.  Consider the $(i,j)$-th block matrix where $i \neq j$.  We denote the coordinates in this block by $\mathcal{B}_{i,j}$.  Then
$$
\sum_{x,y \in \mathcal{B}_{i,j}} K_{x,y} L_{x,y} = \theta_{i,j} \left( \sum_{x,y \in \mathcal{B}_{i,j}} K_{x,y} \right) \leq \sqrt{c |\ccs_i| |\ccs_j|} \theta_{i,j} \left( \sum_{x,y \in \mathcal{B}_{i,j}} K_{x,y}^2 \right)^{1/2}.
$$
The last inequality follows from Cauchy-Schwarz, and by noting that $K$ has at most $c |\ccs_i| |\ccs_j|$ non-zero entries within the block $\mathcal{B}_{i,j}$.  Then by summing over the blocks $(i,j)$ we have
$$
|\langle K,L \rangle| \leq \sum_{i,j} \sqrt{c|\ccs_i| |\ccs_j|} \theta_{i,j} ( \sum_{x,y \in \mathcal{B}_{i,j}} K_{x,y}^2 )^{1/2} \leq \sqrt{c} (\sum_{i,j} |\ccs_i| |\ccs_j|\theta_{i,j}^2)^{1/2} ( \sum_{x,y} K_{x,y}^2 )^{1/2}.
$$
The last inequality follows from Cauchy-Schwarz.  Now note that $( \sum_{x,y} K_{x,y}^2 )^{1/2} = \|K\|_F$, and that $(\sum_{i,j} |\ccs_i| |\ccs_j| \theta_{i,j}^2)^{1/2} = \|L\|_F $, from which the result follows.
\end{proof}

Finally, we are ready to prove  Theorem \ref{thm:incoherence_1}.

\paragraph{Proof of Theorem \ref{thm:incoherence_1}.} Consider   $\tilde{K} \in \tilde{\mathcal{K}}^\perp$ and  $\tilde{L} \in \tilde{\mathcal{L}}^\perp$.   By Proposition \ref{thm:description_of_lperp} we have that 
$\tilde{\mathcal{L}}^\perp=\mathcal{T}_1\oplus \mathcal{T}_2$ and let   $\tilde{L} =\tilde{L}_1+\tilde{L}_2$, where $\tilde{L}_i\in \mathcal{T}_i$.

\medskip 
    \textbf{Part $(i)$.} 
By Lemma \ref{thm:incoherence_Espan} we have that  
$$| \langle \tilde{K}, \tilde{L}_1 \rangle | \leq \sqrt{c}\|\tilde{K}\|_F  \|\tilde{L}_1 \|_F.$$  Write $\tilde{L}_2 = \tilde{L}_{2,\mathrm{diag}} + \tilde{L}_{2,\mathrm{off}}$ where $\tilde{L}_{2,\mathrm{diag}}$ is the block diagonal component of $\tilde{L}_2$ and where $\tilde{L}_{2,\mathrm{off}}$ is the block off-diagonal component $\tilde{L}_2$.  By the definition of $\tilde{L}_{2,\mathrm{off}}$, the block diagonal components of $\tilde{L}_{2,\mathrm{off}}$ are zero.  Next, we note that the off-diagonal block entries of the matrices $F_{x,y,z}$ belong to $\mathcal{N}_{C} \oplus \mathcal{N}_{R}$ -- see, for instance, $F_{1,2,1}$ in \eqref{eq:f121}. 
Hence by Lemma \ref{thm:incoherence_efsum}, we have 
$$| \langle \tilde{K}, \tilde{L}_{2,\mathrm{off}} \rangle | \leq \sqrt{2c} \| \tilde{K} \|_F \| \tilde{L}_{2,\mathrm{off}} \|_F.$$
Moreover, as the diagonal blocks of $\tilde{K}$ are zero we have that 
$$\langle \tilde{K}, \tilde{L}_{2,\mathrm{diag}} \rangle =0.$$
Putting everything together,
$$| \langle \tilde{K}, \tilde{L}_2 \rangle | = | \langle K, \tilde{L}_{2,\mathrm{off}} \rangle | \leq \sqrt{2c}  \| \tilde{K} \|_F \| \tilde{L}_{\mathrm{2,off}} \|_F \leq \sqrt{2c} \| \tilde{K} \|_F \| \tilde{L} \|_F,$$   
where the last inequality follows by noting that $\| L \|_F^2 = \| L_{\mathrm{off}} \|_F^2 + \| L_{\mathrm{diag}} \|_F^2$.

Finally, by Proposition \ref{thm:description_of_lperp}, the subspaces $\mathcal{T}_1$ and $\mathcal{T}_2$ are orthogonal.  Hence by Lemma \ref{thm:orthogonalsubspace_incoherence} applied to the subspaces $\mathcal{T}_1$ and $\mathcal{T}_2$ we have $| \langle \tilde{K}, \tilde{L} \rangle | \leq \sqrt{2} \times \sqrt{2c} \|\tilde{K}\|_F \|\tilde{L}\|_F$.

\medskip 

 \textbf{Part $(ii)$.} Note that
$$
\| P_{\mathcal{K}^\perp}(\tilde{L}) \|_F^2 = \langle P_{\mathcal{K}^\perp}(\tilde{L}), P_{\mathcal{K}^\perp}(\tilde{L}) \rangle = \langle P_{\mathcal{K}^\perp} (P_{\mathcal{K}^\perp}(\tilde{L})), \tilde{L} \rangle = \langle P_{\mathcal{K}^\perp}(\tilde{L}), \tilde{L} \rangle,
$$
where the second and third equalities  follow  from the fact that orthogonal projections are  self-adjoint and idempotent.  Since $P_{\mathcal{K}^\perp}(\tilde{L}) \in \mathcal{K}^\perp$, by Proposition \ref{thm:incoherence_1}, we have 
$$|\langle P_{\mathcal{K}^\perp}(\tilde{L}), \tilde{L} \rangle| \leq 2\sqrt{c} \|P_{\mathcal{K}^\perp}(\tilde{L})\|_F \|\tilde{L}\|_F \leq 2\sqrt{c} \|\tilde{L}\|_F^2,$$ from which the result follows.\qed

\subsection{Proof of Result \ref{thm:result2}} 

\begin{theorem} \label{thm:abcd-exact-recovery}
Suppose $G$ satisfies the $\cscc$ property for some $c < \min \{ \frac{1}{4} (\frac{\min_l 1/|\ccs_l|}{\sum_l 1/|\ccs_l|})^2,\frac{1}{100} \}$.  Then $(\ac,\ks)$ is the unique optimal solution to \eqref{eq:lovasz_lambdamax}.
\end{theorem}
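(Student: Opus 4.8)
The plan is to certify optimality and uniqueness of $(\ac,\ks)$ by exhibiting a strictly complementary dual solution, as prescribed by Theorem~\ref{thm:constraintqualification}. Since $c<1$, Theorem~\ref{thm:extremepoint} already gives that $(\ac,\ks)$ is an extreme point of the feasible region of \eqref{eq:lovasz_lambdamax}, so it remains only to produce a matrix $Z$ satisfying \eqref{cond1}, \eqref{eq:supportcondition}, \eqref{eq:complementaryslackness} together with strict complementarity \eqref{cond3} (the normalization \eqref{eq:traceone} then comes for free by rescaling). Following the proof outline I would take $Z=\zc=P_{\tilde{\mathcal K}\cap\tilde{\mathcal L}}(\zz)$. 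Since $\zc\in\tilde{\mathcal K}\subseteq\mathcal K$ and $\zc\in\tilde{\mathcal L}\subseteq\mathcal L$, the support condition \eqref{eq:supportcondition} and the complementary-slackness condition \eqref{eq:complementaryslackness} hold by construction, and membership in $\tilde{\mathcal L}$ moreover forces $\range(\zc)\subseteq\mathcal J$; thus the real content is to verify $\zc\succeq0$ and $\operatorname{rank}(\zc)=|\V|-\operatorname{rank}(\xc)$.

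Both of these will follow from a single spectral estimate, namely $\|\zc-\zz\|_{\rm op}<\delta$ where $\delta:=\min_l 1/|\ccs_l|$. Indeed, by Proposition~\ref{thm:Unequalsizecanonicalmatrix_spectrum}, $\zz\succeq0$ with $\range(\zz)=\mathcal J$, every nonzero eigenvalue of $\zz$ is at least $\delta$, and $\zz$ vanishes on $\mathcal J^\perp$. Restricting $\zc=\zz+(\zc-\zz)$ to $\mathcal J$, Weyl's inequality gives $\lambda_{\min}(\zc|_{\mathcal J})\ge\delta-\|\zc-\zz\|_{\rm op}>0$; since also $\range(\zc)\subseteq\mathcal J$ (so $\zc$ annihilates $\mathcal J^\perp$), this shows $\zc\succeq0$ and $\operatorname{rank}(\zc)=\dim\mathcal J$, i.e.\ $\range(\zc)=\mathcal J$, which by Proposition~\ref{csexpnaded} is exactly $\operatorname{rank}(\zc)+\operatorname{rank}(\xc)=|\V|$. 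The pair $(\ac,\ks)$ and $\zc/\operatorname{tr}(\zc)$ are then primal/dual optimal (the duality gap equals $\langle\xc,\zc\rangle=0$), and Theorem~\ref{thm:constraintqualification} delivers uniqueness.

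It remains to prove $\|\zc-\zz\|_{\rm op}\le\|\zc-\zz\|_F<\delta$, which is the heart of the argument. From the first-order optimality of the projection, $\zz-\zc=\tilde K+\tilde L$ with $\tilde K\in\tilde{\mathcal K}^\perp$ and $\tilde L\in\tilde{\mathcal L}^\perp$, and I would bound the two pieces by a bootstrap driven by Theorem~\ref{thm:incoherence_1}. First, $\zz,\zc\in\tilde{\mathcal L}$ gives $\langle\zz-\zc,\tilde L\rangle=0$, so $\|\tilde L\|_F^2=-\langle\tilde K,\tilde L\rangle\le 2\sqrt c\,\|\tilde K\|_F\|\tilde L\|_F$ by Theorem~\ref{thm:incoherence_1}(i), whence $\|\tilde L\|_F\le 2\sqrt c\,\|\tilde K\|_F$. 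Second, $\zc=\zz-\tilde K-\tilde L\in\tilde{\mathcal K}$ gives $\tilde K=P_{\tilde{\mathcal K}^\perp}(\zz)-P_{\tilde{\mathcal K}^\perp}(\tilde L)$; here $\|P_{\tilde{\mathcal K}^\perp}(\zz)\|_F\le\sqrt c\,S$ with $S:=\sum_l 1/|\ccs_l|$ (the $(\ccs_l,\ccs_{l'})$-block of $\zz$ with $l\ne l'$ is the constant matrix $\tfrac{1}{|\ccs_l||\ccs_{l'}|}E$, and at most $c|\ccs_l||\ccs_{l'}|$ of its entries survive the restriction to the edge set, by $\cscc$), while $\|P_{\tilde{\mathcal K}^\perp}(\tilde L)\|_F=\|P_{\mathcal K^\perp}(\tilde L)\|_F\le(2\sqrt c)^{1/2}\|\tilde L\|_F$ by Theorem~\ref{thm:incoherence_1}(ii). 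Combining, $\|\tilde K\|_F\le\sqrt c\,S+2\sqrt2\,c^{3/4}\|\tilde K\|_F$, so for $c<\tfrac1{100}$ (which makes $2\sqrt2\,c^{3/4}<\tfrac12$) we obtain $\|\tilde K\|_F\le\sqrt c\,S/(1-2\sqrt2\,c^{3/4})$ and hence $\|\zc-\zz\|_F\le(1+2\sqrt c)\|\tilde K\|_F\le\tfrac{1+2\sqrt c}{1-2\sqrt2\,c^{3/4}}\sqrt c\,S$. Since $c<\tfrac14(\delta/S)^2$ gives $\sqrt c\,S<\delta/2$ and, for $c<\tfrac1{100}$, the dimensionless prefactor is below $2$, this is $<\delta$, as required.

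The main obstacle — really the only delicate point, given that the heavy lifting is the near-orthogonality Theorem~\ref{thm:incoherence_1}, which is already in hand — is getting the two numerical hypotheses on $c$ to cooperate in the bootstrap: the term $\tfrac14\bigl(\min_l 1/|\ccs_l|\big/\sum_l 1/|\ccs_l|\bigr)^2$ must dominate the perturbation relative to the spectral gap $\delta$ of $\zz$ on $\mathcal J$, while $\tfrac1{100}$ has to tame the self-referential term $2\sqrt2\,c^{3/4}\|\tilde K\|_F$ and keep the combined constant inside the margin left by the first bound. Everything else — the spectral description of $\zz$ (Proposition~\ref{thm:Unequalsizecanonicalmatrix_spectrum}), extremality (Theorem~\ref{thm:extremepoint}), and the SDP complementary-slackness bookkeeping — is routine given the results already established.
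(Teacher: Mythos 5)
Your proposal is correct and follows essentially the same route as the paper: the same projected certificate $\zc=P_{\tilde{\mathcal K}\cap\tilde{\mathcal L}}(\zz)$, the same first-order/KKT decomposition $\zz-\zc=\tilde K+\tilde L$, the same incoherence bounds from Theorem~\ref{thm:incoherence_1}, and the same spectral-perturbation reduction of PSDness and strict complementarity to $\|\zc-\zz\|<\sigma_{\ks}(\zz)=\min_l 1/|\ccs_l|$. The only difference is bookkeeping: you close the self-referential estimate on $\|\tilde K\|_F$ using $\langle \zz-\zc,\tilde L\rangle=0$ (which gives $\|\tilde L\|_F\le 2\sqrt{c}\,\|\tilde K\|_F$), whereas the paper closes it on $\|\zz-\zc\|_F$ via the normalized inner product $\epsilon$ and an AM--GM step; both variants yield constants that fit comfortably within the stated hypotheses on $c$.
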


Following Proposition \ref{thm:Unequalsizecanonicalmatrix_spectrum}, the largest singular value of $\zz$ is $\sigma_{\max}(\zz) = \sum_l 1/|\ccs_l|$, while the smallest non-zero singular value of $\zz$ is 
$$\sigma_{\ks}(\zz) = \min_l 1/|\ccs_l|.$$
 Hence an alternative way to express the lower bound in Theorem \ref{thm:abcd-exact-recovery} in terms of a condition number-type of quantity 
$$\frac{ \min_l 1/|\ccs_l| }{ \sum_l 1/|\ccs_l| }= \frac{ \sigma_{\ks}(\zz) }{ \sigma_{\max}(\zz) }.$$

\begin{proof}
As a reminder, our candidate dual certificate is $\zc = P_{\tilde{\mathcal{K}} \cap\tilde{\mathcal{L}}} (\zz)$ as defined earlier in \eqref{eq:euclideanprojection}.  By Theorem \ref{thm:extremepoint}, $\ac$ is an extreme point of the feasible region.  Thus, by  Theorem \ref{thm:constraintqualification}, we need to show that $\zc$ satisfies conditions \eqref{cond1}--\eqref{cond3}.  Conditions \eqref{eq:supportcondition} and \eqref{eq:complementaryslackness} are satisfied by construction.  As we noted in the proof of Theorem \ref{thm:disjointcliques_uniquerecovery}, condition \eqref{eq:traceone} %and   \eqref{eq:objvalks} 
is  taken care of by scaling.  As such, it remains to show that $Z'$ is PSD and that ${\rm range}(Z')={\rm ker}(\xc)$.

\paragraph{Simplification:}  We begin by showing that it suffices to prove the inequality $\sigma_{\max}(\zc - \zz) < \sigma_{\ks}(\zz)$ (as a reminder, $\sigma_{\ks}(\zz)$ is the smallest non-zero singular value of $\zz$).  To see why this is sufficient, let $\bv \in \mathbb{R}^{|\V|}$ and consider its direct sum decomposition with respect to $\mathcal{J}$; i.e., $\bv = \bv_{\mathcal{J}} + \bv_{\mathcal{J}^\perp}$.  Since $\zc \in \tilde{\mathcal{L}}$, we have by Proposition \ref{csexpnaded}, part (4) that   
$$
\bv^{T} \zc \bv = \bv^{T}_{\mathcal{J}} \zc \bv_{\mathcal{J}} = \bv^{T}_{\mathcal{J}} \zz \bv_{\mathcal{J}} + \bv^{T}_{\mathcal{J}} (\zc - \zz) \bv_{\mathcal{J}}.
$$
By Proposition \ref{thm:Unequalsizecanonicalmatrix_spectrum}, $\zz$ restricted on $\mathcal{J}$ is positive definite with smallest eigenvalue at least $\sigma_{\ks}(\zz)$, and hence 
$$
\bv^{T}_{\mathcal{J}} \zz \bv_{\mathcal{J}} \geq \sigma_{\ks}(\zz) \|\bv_{\mathcal{J}}\|_2^2.
$$
On the other hand, the inequality $\sigma_{\max}(\zc-\zz) < \sigma_{\ks}(\zz)$ implies
$$\bv^{T}_{\mathcal{J}} (\zc - \zz) \bv_{\mathcal{J}} <  \sigma_{\ks}(\zz) \|\bv_{\mathcal{J}}\|_2^2.$$
This means that $\bv^{T} \zc \bv \geq 0$ for all $\bv \in \mathbb{R}^{|\V|}$, which means that $\zc$ is PSD.  
  
Finally, we need to show that  ${\rm range}(Z')={\rm ker}(\xc)$. By definition, we have that $Z'\in \tilde{\mathcal{L}}$, so by Proposition \ref{csexpnaded}, we have that 
$$ {\rm range}(Z')\subseteq \mathcal{J}={\rm ker}(\xc).$$
For the converse inequality, we show that ${\rm ker}(Z')\subseteq \mathcal{J}^\perp$. For this, take $\bv\in {\rm ker}(Z')$. Let  $\bv = \bv_{\mathcal{J}} + \bv_{\mathcal{J}^\perp}$ and assume  that $\bv_{\mathcal{J}} \ne 0$. Then,  we would have
$$0=\bv^{T} \zc \bv =\bv^{T}_{\mathcal{J}} \zc \bv_{\mathcal{J}}>0,$$
leading to a contradiction.

\paragraph{Expressing  $\zz - \zc$ via the KKT conditions.} %First, we express the difference $\zz - \zc$ via the KKT conditions.  
The first-order optimality condition corresponding to \eqref{eq:euclideanprojection} gives 
\begin{equation} \label{eq:foc}
\zz - \zc = \tilde{L}  + \tilde{K},
\end{equation}
where $\tilde{L}$ lies in the normal cone of $\tilde{\mathcal{L}}$ at $Z'$ and $\tilde{K}$ lies in the normal cone of $\mathcal{K}$ at $Z'$. Here 
we used the fact  that the normal cone to an intersection is the sum of the normal cones, e.g. see \cite[Corollary 23.8.1]{rockafellar}.  Moreover, as  the normal cone to a subspace is the orthogonal subspace, we have that  $\tilde{L} \in {\tilde{\mathcal{L}}}^\perp$ and $\tilde{K}\in \tilde{\mathcal{K}}^\perp$. First, by projecting \eqref{eq:foc} onto $\tilde{\mathcal{L}}$ we have
\begin{equation} \label{eq:projection_v1}
\zz - \zc = P_{\tilde{\mathcal{L}}}(\tilde{K}).
\end{equation}
This follows as  $\zz \in \tilde{\mathcal{L}}$, $\zc \in \tilde{\mathcal{K}} \cap \tilde{\mathcal{L}}$.  Second, by projecting \eqref{eq:foc} onto the subspace $\tilde{\mathcal{K}}^\perp$ we have
\begin{equation} \label{eq:projection_v2}
P_{\tilde{\mathcal{K}}^\perp} (\zz)  = P_{\tilde{\mathcal{K}}^\perp}(\tilde{L}) + \tilde{K}.
\end{equation}
Combining  \eqref{eq:projection_v1} and \eqref{eq:projection_v2}, we have
\begin{equation} \label{eq:zhatx_firstbound}
\| \zz - \zc \|_F = \| P_{\tilde{\mathcal{L}}}(\tilde{K}) \|_F \leq \| \tilde{K} \|_F =  \| P_{\tilde{\mathcal{K}}^\perp} (\zz) - P_{\tilde{\mathcal{K}}^\perp}(\tilde{L}) \|_F \leq \| P_{\tilde{\mathcal{K}}^\perp} (\zz) \|_F + \| P_{\tilde{\mathcal{K}}^\perp}(\tilde{L}) \|_F,
\end{equation}
where  the first equality follows from \eqref{eq:projection_v1}, and the last  inequality follows from  the triangle inequality.  

We next proceed to bound the terms $\|P_{\tilde{\mathcal{K}}^\perp} (\zz) \|_F$ and $  \| P_{\tilde{\mathcal{K}}^\perp}(\tilde{L}) \|_F$.

\paragraph{ Bound  the term $  \| P_{\tilde{\mathcal{K}}^\perp} (\zz) \|_F $.} We show that

\begin{equation}\label{csvdgb}
\| P_{\tilde{\mathcal{K}}^\perp}(Z^*) \|_F \leq \sqrt{c} \left(\sum_l 1/|\ccs_l|\right).
\end{equation}
The $(i,j)$-th entry of the matrix $P_{\tilde{\mathcal{K}}^\perp}(Z)$ is equal to $Z_{i,j}$ if $(i,j) \in \E$, and is equal to zero otherwise.  Consider the block corresponding to $(\ccs_x,\ccs_y)$, where $x\neq y$.  Each non-zero entry is $1/(|\ccs_x||\ccs_y|)$ and there are at most $c |\ccs_x||\ccs_y|$ entries.  Hence the sum of squares of the entries in this block is at most $c / (|\ccs_x||\ccs_y|)$.  We sum over the all indices $x$ and $y$ to obtain $\| P_{\tilde{\mathcal{K}}^\perp}(Z) \|_F^2  \leq \sum_{x,y} c / (|\ccs_x||\ccs_y|) \leq c (\sum_l 1/|\ccs_l|)^2$, from which the result follows.  (In the first inequality, recall that the block-diagonal entries of $P_{\tilde{\mathcal{K}}^\perp}(Z)$ are zero and thus do not contribute to the sum.)

\paragraph{Bound the term  $ \| P_{\tilde{\mathcal{K}}^\perp}(\tilde{L}) \|_F$.}  Setting   
 $$\epsilon = \langle \tilde{L} / \|\tilde{L}\|_F, \tilde{K} / \|\tilde{K}\|_F \rangle,$$
   it follows  from \eqref{eq:foc} that  
$$\| \zz - \zc \|_F^2 = \| \tilde{L} \|_F^2 + \| \tilde{K} \|_F^2 + 2 \epsilon\| \tilde{L} \|_F  \| \tilde{K} \|_F .$$  By the AM-GM inequality we have that 
$$ 2\| \tilde{L} \|_F   \| \tilde{K} \|_F\ge -\| \tilde{L} \|_F^2 - \| \tilde{K} \|_F^2.$$
Consequently, we get  
$$\| \zz - \zc \|_F^2 \geq (1-|\epsilon|)(\| \tilde{L} \|_F^2 + \| \tilde{K} \|_F^2)$$ and since $|\epsilon|\le 1$ we have 
\begin{equation}\label{csdvdfb}
\| \tilde{L} \|_F^2 \le \| \tilde{L} \|_F^2 + \| \tilde{K} \|_F^2 \leq \left(\frac{1}{1-|\epsilon|}\right) \| \zz - \zc \|_F^2.
\end{equation}
Combining \eqref{csdvdfb} with Theorem \ref{thm:incoherence_1} $ (ii)$   we have
\begin{equation}\label{xsdvdfv}
\| P_{\tilde{\mathcal{K}}^\perp}(\tilde{L}) \|_F \leq (2\sqrt{c})^{1/2} \|\tilde{L}\|_F \leq { (2\sqrt{c})^{1/2} \over \sqrt{1-|\epsilon|}} \| \zz - \zc \|_F.
\end{equation}

\paragraph{Concluding the proof.} %We are now in a position to conclude the result.  
We have established in \eqref{eq:zhatx_firstbound} that 
$$ \| \zz - \zc \|_F  \leq \| P_{\tilde{\mathcal{K}}^\perp} (\zz) \|_F + \| P_{\tilde{\mathcal{K}}^\perp}(\tilde{L}) \|_F,$$
which combined with \eqref{csvdgb} and \eqref{xsdvdfv}  shows that
\begin{equation}\label{cxvdbdf}
\| \zz - \zc \|_F \leq \sqrt{c} (\sum_l 1/|\ccs_l|) + { (2\sqrt{c})^{1/2}\over \sqrt{1-|\epsilon|}} \| \zz - \zc \|_F.
\end{equation}
By Theorem \ref{thm:incoherence_1} $(i)$ we have that 
$$|\epsilon| =|\langle \tilde{L} / \|\tilde{L}\|_F, \tilde{K} / \|\tilde{K}\|_F \rangle| \leq 2 \sqrt{c},$$ so for $c\le 1/100$ we get 
$${ (2\sqrt{c})^{1/2}\over \sqrt{1-|\epsilon|}}  \leq { 1\over 2}.$$
 Then,   \eqref{cxvdbdf} implies  that 
$$\| \zz - \zc \|_F \leq 2 \sqrt{c} (\sum_l 1/|\ccs_l|).$$
 Finally, using that  the Frobenius norm is always greater than the spectral norm, we have
$$
\sigma_{\max}( \zz - \zc) \leq \| \zz - \zc \|_F \leq 2 \sqrt{c} (\sum_l 1/|\ccs_l|),
$$
which is strictly smaller than $\sigma_{\ks}(\zz)$ whenever $c < \frac{1}{4} ^2 (\sigma_{\ks}(\zz)/\sigma_{\max}(\zz))^{2} $.
\end{proof}

\section{Recovery for planted clique cover instances} \label{sec:randomgraphs}

We conclude Result \ref{thm:result1} by applying Hoeffding's inequality onto the conclusions of Result \ref{thm:result2} directly.

\begin{theorem} \label{thm:uneq_random_cond}
Let $G$ be a random planted clique cover instance defined on the cliques $\{ \ccs_l \}_{l=1}^{\ks}$  where we introduce edges between cliques with probability $p$.  If $p < c$ then $G$ is $c$-neighborly with probability greater than 
$$
1 - \sum_{i=1}^{\ks} {|\ccs_i| \sum_{j \neq i}\exp(-2|\ccs_j|(c-p)^2}).
$$
\end{theorem}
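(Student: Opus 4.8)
The plan is to verify the $\cscc$ property pointwise: for each vertex $v$ and each clique $\ccs_l$ not containing $v$, the quantity $|e(v,\ccs_l)|$ is a sum of independent Bernoulli random variables, so a single Hoeffding bound controls the probability that it exceeds $c|\ccs_l|$, and a union bound over all such pairs finishes the argument.

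First I would fix a clique $\ccs_i$, a vertex $v\in\ccs_i$, and a second clique $\ccs_j$ with $j\neq i$. Since $v\notin\ccs_j$, every potential edge between $v$ and a vertex of $\ccs_j$ is a cross-clique edge, hence present independently with probability $p$ by the definition of the planted clique cover model. Consequently $|e(v,\ccs_j)|$ is a sum of $|\ccs_j|$ independent Bernoulli$(p)$ random variables, with mean $p\,|\ccs_j|$.

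Next I would apply Hoeffding's inequality in the form: if $X$ is a sum of $n$ independent random variables each valued in $[0,1]$, then $\Pr[X-\mathbb{E}X\geq t]\leq \exp(-2t^2/n)$. Taking $n=|\ccs_j|$ and $t=(c-p)|\ccs_j|$, which is strictly positive precisely because of the hypothesis $p<c$, this gives
$$
\Pr\big[\,|e(v,\ccs_j)|>c\,|\ccs_j|\,\big] = \Pr\big[\,|e(v,\ccs_j)|-p\,|\ccs_j|\geq (c-p)|\ccs_j|\,\big] \leq \exp\!\big(-2|\ccs_j|(c-p)^2\big).
$$

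Finally, $G$ fails to satisfy the $\cscc$ property exactly when $|e(v,\ccs_l)|>c\,|\ccs_l|$ holds for at least one pair consisting of a vertex $v$ and a clique $\ccs_l\not\ni v$. Summing the bound above over all such pairs — indexing by the clique $\ccs_i$ containing $v$, then the $|\ccs_i|$ choices of $v$, then the cliques $\ccs_j$ with $j\neq i$ — the union bound yields that this failure probability is at most $\sum_{i=1}^{\ks}|\ccs_i|\sum_{j\neq i}\exp(-2|\ccs_j|(c-p)^2)$, and the stated lower bound on the probability of success follows by taking complements. I expect no real obstacle here: the only points needing care are that the $|\ccs_j|$ edges counted by $e(v,\ccs_j)$ are genuinely mutually independent (which holds since, as $v\notin\ccs_j$, they are all cross-clique ``noise'' edges), and that the union bound across distinct pairs requires no independence between pairs.
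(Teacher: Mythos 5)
Your proposal is correct and follows essentially the same route as the paper: a Hoeffding bound on $|e(v,\ccs_j)|$ for each pair of a vertex $v\in\ccs_i$ and a clique $\ccs_j$ with $j\neq i$, using the deviation $t_j=(c-p)|\ccs_j|$, followed by a union bound over all such pairs. The paper's proof is the same computation written with indicator variables $X_{ia,jb}$ and the choice $t_j=|\ccs_j|(c-p)$ made at the end, so there is no substantive difference.
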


\begin{proof}
Let $X_{ia,jb}$ be a binary variable equal to 1 if vertices $a \in \ccs_i$ and $b\in \ccs_j$ are connected, and equal to 0 otherwise.  First we have $\mathbb{E}[\sum_{b \in \ccs_j}{X_{ia, jb}}] = \sum_{b \in \ccs_j}{\mathbb{E}[X_{ia, jb}]} = |\ccs_j|p$.  By Hoeffding's inequality we have
\begin{equation}\label{ineq:Hoeffding_U}
\mathbb{P} \Big(\sum_{b \in \ccs_j}{X_{ia, jb}}  \geq t_j + |\ccs_j|p \Big)  = \mathbb{P} \Big(\sum_{b \in \ccs_j}{X_{ia, jb}} - \mathbb{E}[\sum_{b \in \ccs_j}{X_{ia, jb}}] \geq t_j \Big) \leq \exp ( -2t_j^2 / |\ccs_j| ).
\end{equation}
Using a union bound we get that 
$$\mathbb{P} \Big( \bigcap _{i, a\in \ccs_i, j \ne i } \Big( \sum_{b}{X_{ia, jb}} <t_j + |\ccs_j|p \Big) \Big) \ge 1- \sum_i|\ccs_i|
\sum_{j\ne i}\exp(-2t_j^2/|\ccs_j|).$$
Finally, given that $p<c$, we set $t_j=|\ccs_j|(c-p)>0$ to get the desired result. 
\end{proof}

\section{Numerical comparison to alternative techniques } \label{subsec:related work}
In this section, we assess the effectiveness of the Lov\'asz theta function for recovering planted clique cover instances by comparing it to alternative techniques in the literature.  Specifically, we compare our method with the following approaches:

\begin{itemize} 
\item {\em Community Detection approach:}  We frame the clique cover problem as an instance of community detection, and use the method proposed by Oymak and Hassibi~\cite{OH:11}.

\item {\em $k$-Disjoint-Clique approach: } We consider the clique cover problem as an instance of the $k$-disjoint-clique problem, and apply the method presented by Ames and Vavasis \cite{AV:14}.

\item {\em Subgraph Isomorphism approach:}  We view the problem as an instance of  subgraph isomorphism, and apply the framework proposed by Candogan and Chandrasekaran \cite{CC:18}.
\end{itemize}

%In this section we review some prior work related to ours, and we compare the performance of these methods with ours at recovering clique covers.

\paragraph{Community Detection/Graph clustering.}  
%  In what follows, we use the term `community detection' because the bulk of prior work that is most relevant to ours tend to make this choice.
Oymak and Hassibi \cite{OH:11}  (see also \cite{CJSX:14}) present a  method for  detecting communities in an undirected  graph  $\mathcal{G} = (\mathcal{V},\E)$  by solving the 
following  SDP:
 %be an undirected graph, ax. \cite{OH:11} 
\begin{equation} \label{eq:sparselowrank_comdetect}
\begin{aligned}
\min_{S,L} ~~ \lambda \| S \|_{1} +  \| L \|_{*} \quad \mathrm{s.t.} \quad S+L = A, \quad 0 \leq L \leq \matrixones.
\end{aligned}
\end{equation}
Here     $A$ is  the adjacency matrix of $G$,  $\lambda>0$ is a tuning parameter, $\| X \|_1 = \sum_{i,j} |X_{i,j}|$ is the L1-norm over the matrix entries, while $\|X\|_* = \sum_i \sigma_i (X)$ is the matrix nuclear norm. %(or the Schatten-1 norm).  

  Let $(\hat{S},\hat{L})$ be the optimal solution to this SDP.   The intuition behind the % formulation 
  SDP \eqref{eq:sparselowrank_comdetect} % proposed by Oymak and Hassibi \cite{OH:11} 
is that it performs a matrix deconvolution in which the adjacency matrix $A$ is separated as the %linear 
sum of a low-rank component $\hat{L}$ and a sparse matrix $\hat{S}$.  %It is hoped that 
The hope is that (possibly by tuning the parameter~$\lambda$) the low-rank component takes the form $\hat{L} = \sum_{l} \bone_{\ccs_l} \bone_{\ccs_l}^T$, where $\ccs_l$ are the indicator vectors for {\em disjoint} communities, while the sparse component $\hat{S}$ captures noise that is (hopefully)~sparse.  

The specific form in \eqref{eq:sparselowrank_comdetect} builds upon a long sequence of ideas, originating from the observation that the L1-norm is effective at inducing sparse structure, while the matrix nuclear norm is effective at inducing low-rank structure \cite{CanPla:11,RFP:10}.  Subsequent work proposed the formulation \eqref{eq:sparselowrank_comdetect} for deconvolving matrices as the linear sum of a sparse matrix and a low-rank matrix \cite{CSPW:11,WPMGR:09}; in particular, Chandrasekaran et. al. \eqref{eq:sparselowrank_comdetect} show that penalization with the L1-norm plus the nuclear-norm succeeds at recovering both components so long as these components satisfy a {\em mutual incoherence} condition.  The formulation in \eqref{eq:sparselowrank_comdetect} is a reasonable approach for obtaining clique covers because one can view cliques as clusters.  

\paragraph{$k$-Disjoint-Clique Approach.} In the $k$-disjoint-clique problem,  the goal is to find $k$ disjoint cliques (of any size) that cover as many nodes of the graph as possible.  It is worth noting that in this particular setting, the number of cliques $k$ is constant and is determined by the user as a parameter. Ames and Vavasis  \cite{AV:14}  introduce the following SDP for this problem:
\begin{equation} \label{eq:av}
\begin{aligned}
\max ~~ & ~~ \langle X, \matrixones \rangle \\
\mathrm{s.t.} ~~ & ~~ X \bone \leq \bone \\
& ~~ X_{i,j} = 0 \quad (i,j) \notin \E, i \neq j \\
& ~~ \langle X, I \rangle = k \\
& ~~ X \succeq 0.
\end{aligned}
\end{equation}

\paragraph{Subgraph isomorphism.}  Candogan and Chandrasekaran \cite{CC:18} investigate the problem of finding {\em any} subgraph embedded within a larger graph, known as the {\em subgraph isomorphism problem}.  A natural approach is to search over matrices obtained by conjugating the adjacency matrix of the subgraph with the permutation group, leading to a quadratic assignment problem instance.  The basic idea in \cite{CC:18} is to relax this set by conjugating with the orthogonal group $O(|V|)$ instead.  The resulting convex hull is known as the Schur-Horn orbitope \cite{SSS:11}, and it admits a tractable description via semidefinite programming \cite{DW:09}.  In our context, the adjacency matrix of the subgraph (technically not a proper subgraph) is $\sum_{l} \bone_{\ccs_l} \bone_{\ccs_l}^T$.  Assuming that all cliques $\ccs_l$ have equal number of vertices $n$, the proposed SDP formulation for finding the sub-graph is \footnote{The above formulation can be further simplified by eliminating $Z_0$, and replacing the equality condition $Z_0+Z_1 = I$ with $Z_1 \preceq I$.  We retain $Z_0$ in \eqref{eq:sh} to be consistent with the formulation in \cite{CC:18}. }
\begin{equation} \label{eq:sh}
\begin{aligned}
\max ~~ & ~~ \langle A, X \rangle \\
\mathrm{s.t.} ~~ & ~~ X_{i,j} = 0 \quad (i,j) \notin \E, i \neq j \\
& ~~ X = 0 \cdot Z_0 + n \cdot Z_1 \\
& ~~ \langle Z_0, I \rangle = nk-k \\
& ~~ \langle Z_1, I \rangle = k \\
& ~~ Z_0 + Z_1 = I \\
& ~~ Z_0, Z_1 \succeq 0.
\end{aligned}
\end{equation}
There is one severe limitation of formulating the clique cover instance as a subgraph isomorphism problem -- one needs knowledge of the {\em spectrum} of the adjacency matrix of the clique cover.  This assumption is certainly unrealistic, though one should keep in mind that Candogan and Chandrasekaran's work in \cite{CC:18} was designed to solve a completely different problem in the first place.

\subsection{Experimental setup and results}

In our experimental setup, we generate  clique cover  instances from the planted clique cover model  with $\ks=10$ cliques, each of size $n=10$.  For each value of   $p \in \{ 0.00, 0.05, 0.10, \ldots, 1.00 \}$, we generate $20$ random clique cover  instances.  We compare the Lov\'asz theta against the three methods described in the previous section. Our result are summarized in  Figure \ref{fig:comparison},  where we illustrate the average success rates of each method in recovering the underlying clique cover instances across the entire spectrum of $p$ values. In  the reported experiments we use two notions of recovery for $\lt$:
\begin{enumerate}
\item We declare {\em strong recovery}  when the solver outputs an optimal solution $\hat{X}$ that is equal to $\xc$ (up to a small tolerance).
\item We declare {\em weak recovery} when $\lt(G) = \ks$ but the output solution $\hat{X}$ is not equal to $\xc$. \end{enumerate}

First, we  compare against  the SDP   \eqref{eq:sparselowrank_comdetect} for community detection.  We sweep over the choices regularization parameters $\lambda \in \{ 0.0, 0.1, \ldots, 1.0 \}$.    This sweeping process makes their method the most expensive to implement among all methods listed here. For a given choice of  $\lambda$, let $(\hat{L},\hat{S})$ denote the optimal solution.  We say that the method succeeds if $\hat{L} = \sum \bone_{\ccs_l}\bone_{\ccs_l}^T$ for some choice of parameter~$\lambda$. 

Second,  we compare against  the SDP \eqref{eq:av} for the $k$-disjoint clique problem.  We say that the method succeeds if the optimal solution satisfies $\hat{X} = \sum \bone_{\ccs_l}\bone_{\ccs_l}^T$. It's worth noting that when applying this method to uncover clique covers, there is a minor constraint in that we must specify the size of cliques $\ks=10$, or alternatively, sweep over all feasible clique sizes.

Our third basis of comparison is the SDP \eqref{eq:sh} proposed by Candogan and Chandrasekaran for finding subgraphs based on the Schur-Horn orbitope of a graph \cite{CC:18}.  We say that the method succeeds if the optimal solution satisfies $\hat{X} = \sum \bone_{\ccs_l}\bone_{\ccs_l}^T$. A limitation of using this  
approach  for finding a clique cover is that we  implicitly assume that we know what we are searching for in the first place (to be precise, it is the {\em spectrum} of the target graph we require as an input).  This assumption is somewhat unrealistic, but one should bear in mind that Candogan and Chandrasekaran's work in \cite{CC:18} was intended to solve a completely different problem in the first place.

Our experiments are summarized in Figure \ref{fig:comparison}. First, when examining the performance of the Lovász theta function $\lt$, we observe a congruence between our theoretical predictions, which predict  strong recovery for small $p$ values, and the outcomes derived from our numerical experiments. Furthermore, it is noteworthy that this strong recovery pattern persists  until approximately $p\approx 0.4$.

In the context of our comparisons with the other three methods, the Lovász theta function emerges as the most successful,  with only a slight edge over the SDP \eqref{eq:av} proposed by Ames and Vavasis \cite{AV:14} for finding disjoint cliques, as well as the Schur-Horn orbitope-based relaxation \eqref{eq:sh} presented by Candogan and Chandrasekaran. The method that exhibited the weakest performance in our experiments was the matrix deconvolution method by Oymak and Hassibi \cite{OH:11}.

\begin{figure}[H]
\centering
\includegraphics[width=0.5\textwidth]{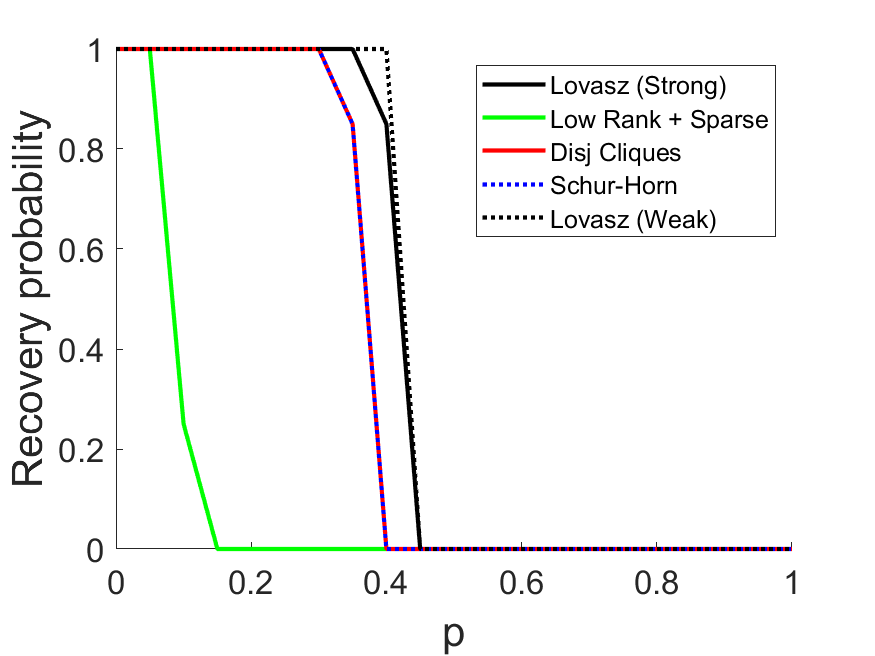}
\caption{Comparison of Lov\'asz theta function with other methods.}
\label{fig:comparison}
\end{figure}

\section{Future directions} \label{sec:numerical}

In this section, we conduct and discuss additional numerical experiments involving the Lov\'asz theta function $\lt(G)$.  Our objective is to report interesting behavior of the Lov\'asz theta function in the context of recovering clique covers that are not supported or apparent from our theoretical findings, and suggest future directions to investigate.

\subsection{Comparison with ILP}

In our first example, we compare how tight the Lov\'asz theta function is in comparison with the clique cover number.  To investigate this question, we generate a graph from the planted clique cover model for varying choices of $p \in [0,1]$.  We compute the clique cover number by solving the following Integer Linear Program (ILP)
\begin{equation} \label{eq:cliquecover_ilp}
\begin{aligned}
\min ~~ & ~~ \sum_{c} y_c \\
\mathrm{s.t.} ~~ & ~~ x_{i,c} \leq y_c \quad \text{ for all } \quad i,c \\
& ~~ x_{i,c} + x_{j,c} \leq 1 \quad (i,j) \in \E \\
& ~~ \sum_{c} x_{i,c} = 1 \quad \text{ for all } \quad i \\
& ~~ x_{i,c}, y_c \in \{0,1\}.
\end{aligned}
\end{equation}
Here, the binary variable $x_{i,c}$ indicates whether vertex $i$ is contained in clique $c$, while the binary variable $y_c$ indicates whether clique $c$ is non-empty.  The constraint $x_{i,c} + x_{j,c} \leq 1$ ensures no adjacent pair of vertices are in the same clique, while the constraint $\sum_{c} x_{i,c} = 1$ ensures that every vertex belongs to a unique clique. % have a covering.

Our motivation for studying this question is as follows:  In our experiments in Section \ref{subsec:related work}, we generally take the clique covering number to be equal to $\ks$.  However, short of solving computing the clique covering number outright (say, via \eqref{eq:cliquecover_ilp}), there is no simple way of verifying this is indeed the case.  The concern increases as $p$ increases, as we may inadvertently lower the clique covering number when more edges are added.  As such, one objective of this experiment is to understand if our assumption that the clique covering number is well approximated by $\ks$ is sound.  

In the left sub-plot of Figure \ref{fig:ILP} we compare the clique covering number with the Lov\'asz theta function on a planted clique cover instance with $8$ cliques, each of size $8$, and with varying choices of parameter $p$.  Based on our results we notice that the deviation between these quantities is at most $\approx 2$, and is greatest at $p \approx 0.7$.  

In the right sub-plot of Figure \ref{fig:ILP} we compare the time taken for the ILP solver Gurobi \cite{gurobi} to compute \eqref{eq:cliquecover_ilp} with the SDP solver SDPT3 \cite{sdpt3:1,sdpt3:2} to compute \eqref{eq:lovasz_lambdamax}.  The time taken for SDP solver is approximately equal across all problem instances, as one would expect.  The time taken for the ILP solver is quite small for $p \leq 0.4$, but becomes substantially longer for $p=0.6$.  In the same plot we also track the number of simplex solves required by the ILP solver Gurobi -- the trend of this curve is largely similar to the previous curve.  These observations suggest that the most complex regime for computing the clique covering number for the Erd\H{o}s-R\'enyi noise model occurs around $p \approx 0.6$.  There are a number of explanations: First, our theoretical findings suggest that it is generally easier for the Lov\'asz theta function to discover minimal clique covers for small values of $p$, and these curves do also suggest that small values of $p$ correspond to `easier' instances.  On the other extreme, at $p = 1$, there is only one clique.  For values of $p = 1-\epsilon$, it may be that large cliques are easy to find, and hence ILP solvers are able to certify optimality relatively quickly.  It would be interesting to investigate the behavior of these curves for increasing problem sizes, though one obvious difficulty is that the ILP instances may become impractical to compute.  As a note, the SDP experiments were performed using the SDPT3 solver \cite{sdpt3:1,sdpt3:2}, called from the CVX parser \cite{cvx,gb08} with default precision setting and running on MATLAB R2022a.  The ILP experiments were conducted with Gurobi version 10.0.1 \cite{gurobi}.

\begin{figure}[H]
\centering
\includegraphics[width=0.48\textwidth]{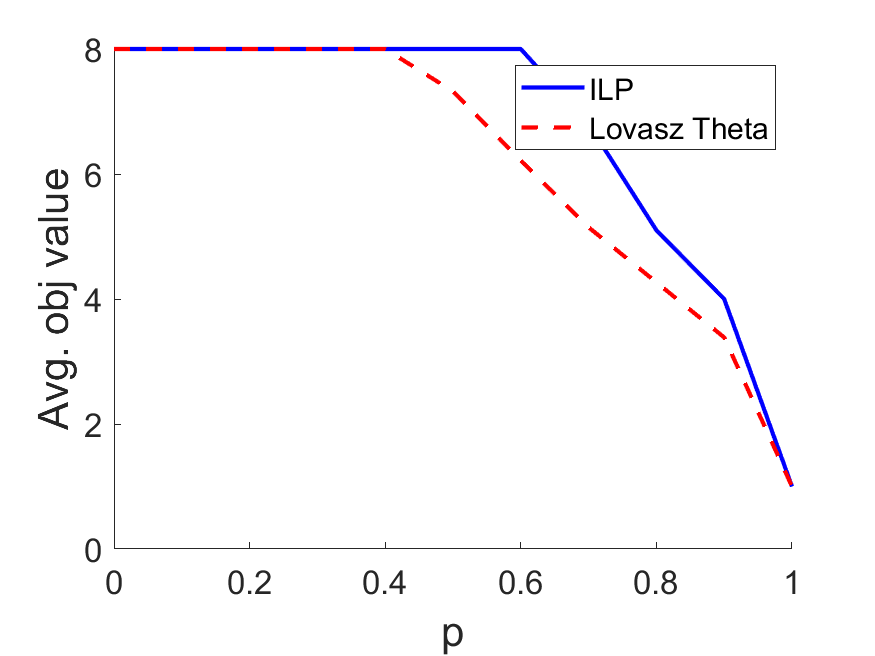}
\includegraphics[width=0.48\textwidth]{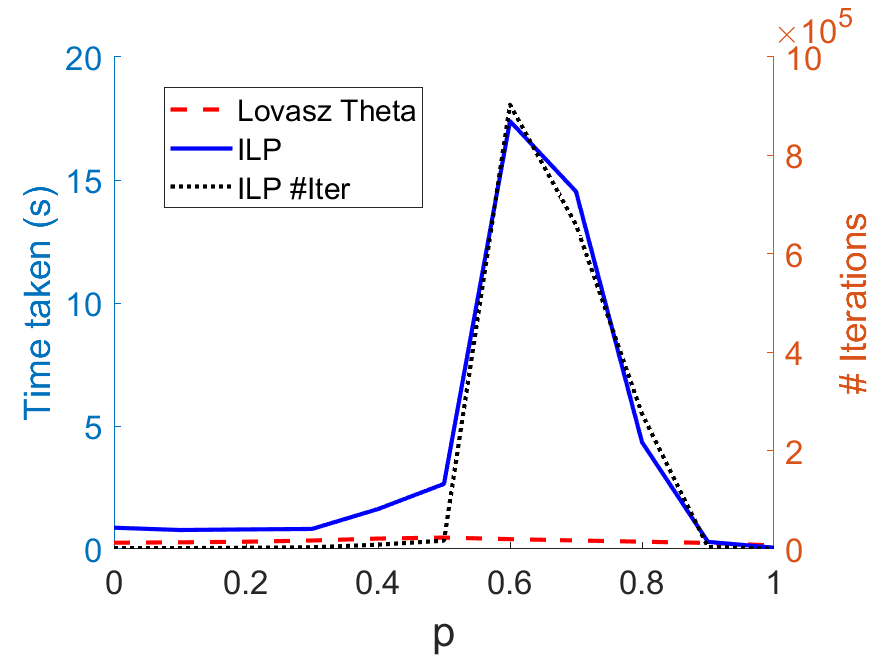}
\caption{Comparison of the clique covering number with Lov\'asz theta for random clique cover model (left).  Comparison of time taken for ILP solver to compute clique covering number with time taken for SDP solver to compute Lov\'asz theta function (right). The number of simplex iterations taken by ILP solver is included as a reference.}
\label{fig:ILP}
\end{figure}

\subsection{Phase Transition}

In the second experiment, we investigate the probability that the Lov\'asz theta function correctly recovers the planted cliques as certain parameters are taken to $+\infty$.  Our objective is to see if a phase transition arises.

To investigate this problem, we consider an experimental set-up where all the cliques are of equal size.  We set the number of cliques $\ks$ to be equal to the size of each clique $n$, with the values $n = \ks$ ranging over $\{5,\ldots, 15\}$.  For each value of $n$ and $\ks$, we generate $10$ random graphs from the planted clique cover model with $p$ taking values from $\{ 0.00, 0.05, 0.10, \ldots, 1.00 \}$.  In Figure \ref{fig:phasetransition} we plot the success probabilities corresponding to each value of $p$.  We plot the curve corresponding to strong recovery in black, and the curve corresponding to weak recovery in red.

We make a number of observations.  First, we observe that the transition from success to failure becomes more abrupt as we increase the values of $n$ and $\ks$.  This observation holds for the both types of recovery conditions.  In particular, our results suggest that a phase transition occurs as these values are taken to $+\infty$, and we conjecture that this is indeed the case, as is typical with many phenomena in random graphs.  Second, and quite interestingly, the gap between both types of recovery {\em shrink} as the problem parameters $n$ and $\ks$ increase.  We conjecture that both curves {\em coincide} in the limit.

\begin{figure}[H]
\centering
\includegraphics[width=0.5\textwidth]{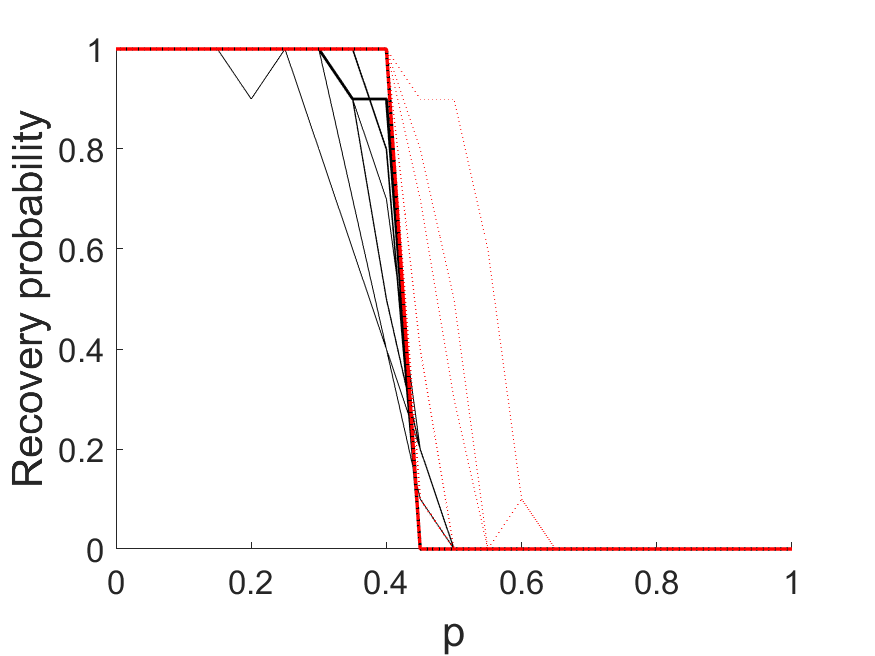}
\caption{Probability of correctly recovering a planted clique instance with increasing clique size and increasing number of cliques.  The black curves correspond to strong recovery while the red curves correspond to weak recovery.  The thickness of the lines increase with the problem parameters $n$ and $\ks$.}
\label{fig:phasetransition}
\end{figure}

\paragraph{Acknowledgements.} YS gratefully acknowledges Ministry of Education (Singapore) Academic Research Funds (Tier 1) R-146-000-329-133 and A-8002497-00-00. AV is supported by the MOE Tier 2 Grant (MOE-T2EP20223-0018), the National Research Foundation, Singapore, under its QEP2.0 programme (NRF2021-QEP2-02-P05), the CQT++ Core Research Funding Grant (SUTD) (RS-NRCQT-00002), the National Research Foundation Singapore and DSO National Laboratories under the AI Singapore Programme (Award Number: AISG2-RP-2020-016), and partially by Project MIS 5154714 of the National Recovery and Resilience Plan, Greece 2.0, funded by the European Union under the NextGenerationEU Program.

\bibliography{bib_kcliques}

\end{document}